\newtheorem{thm}{Theorem}[section]
\newtheorem{lem}{Lemma}[section]
\newtheorem{prop}{Proposition}[section]
\theoremstyle{definition}
\newtheorem{defin}{Definition}[section]
\theoremstyle{remark}
\newtheorem{rem}{Remark}[section]
\numberwithin{equation}{section}
\newtheorem{oss}[thm]{Remark}
\newtheorem{assum}{Assumption}}
\newcommand{\cvd}{\hfill$\square$}
\title{ Regularized Transformation-Optics Cloaking for the Helmholtz Equation: From Partial Cloak to Full Cloak}
\author{Jingzhi Li\thanks{Faculty of Science, South University of Science and
Technology of China, Shenzhen 518055, P.~R.~China.  Email: {\tt li.jz@sustc.edu.cn}}, Hongyu Liu\thanks{Department of Mathematics and Statistics, University of North Carolina, Charlotte, NC 28223, USA.   Email:  {\tt hongyu.liuip@gmail.com}}, Luca Rondi\thanks{Dipartimento di Matematica e Geoscienze, Universit\`a degli Studi di Trieste, Trieste, Italy.  Email: {\tt rondi@units.it}}, Gunther Uhlmann\thanks{Department of Mathematics, University of Washington, Seattle, WA 98195, USA and Fondation des Sciences Math\'ematiques de Paris.  Email:  {\tt gunther@math.washington.edu}}    }
\begin{document}

\date{}

\maketitle

\begin{abstract}
We develop a very general theory on the regularized approximate invisibility cloaking for the wave scattering governed by the Helmholtz equation in any space dimensions $N\geq 2$ via the approach of transformation optics. There are four major ingredients in our proposed theory: 1).~The non-singular cloaking medium is obtained by the push-forwarding construction through a transformation which blows up  a subset $K_\varepsilon$ in the virtual space, where $\varepsilon\ll 1$ is an asymptotic regularization parameter. $K_\varepsilon$ will degenerate to $K_0$ as $\varepsilon\rightarrow +0$, and in our theory $K_0$ could be any convex compact set in $\mathbb{R}^N$, or any set whose boundary consists of Lipschitz hypersurfaces, or a finite combination of those sets. 2).~~A general lossy layer with the material parameters satisfying certain compatibility integral conditions is employed right between the cloaked and cloaking regions. 3).~The contents being cloaked could also be extremely general, possibly including, at the same time, generic mediums and, sound-soft, sound-hard and impedance-type obstacles, as
well as some sources or sinks. 4).~In order to achieve a cloaking device of compact size, particularly for the case when $K_\varepsilon$ is not ``uniformly small", an assembly-by-components, the (ABC) geometry is developed for both the virtual and physical spaces and the blow-up construction is based on concatenating different components.

Within the proposed framework, we show that the scattered wave field $u_\varepsilon$ corresponding to a cloaking problem will converge to $u_0$ as $\varepsilon\rightarrow +0$, with $u_0$ being the scattered wave field corresponding to a sound-hard $K_0$. The convergence result is used to theoretically justify the approximate full and partial invisibility cloaks, depending on the geometry of $K_0$. On the other hand, the convergence results are conducted in a much more general setting than what is needed for the invisibility cloaking, so they are of significant mathematical interest for their own sake. As for applications, we construct three types of full and partial cloaks. Some numerical experiments are also conducted to illustrate our theoretical results.

\medskip

\noindent{\bf Keywords:}~~wave scattering, Helmholtz equation, invisibility cloaking, transformation optics, partial and full cloaks, asymptotic estimates

\noindent{\bf 2010 Mathematics Subject Classification:}~~35Q60, 35J05, 31B10, 35R30, 78A40
\end{abstract}

\section{Introduction}

This paper is concerned with the invisibility cloaking for the wave scattering governed by the Helmholtz equation via the approach of transformation optics \cite{GLU,GLU2,Leo,PenSchSmi}, which is a rapidly growing scientific field with many potential applications. We refer to  \cite{CC,GKLU4,GKLU5,Nor,U2,YYQ} and the references therein
for discussions of the recent progress on both the theory and experiments.

Let $\Omega$ and $D$ be two bounded Lipschitz domains in $\mathbb{R}^N$, $N\geq 2$, such that $D\Subset \Omega$. 
Let $\sigma=\sigma(x)=(\sigma^{ij}(x))\in\mathbb{R}^{N\times N}$, $x\in\mathbb{R}^N$, be a symmetric-matrix valued measurable function such that, for some $\lambda$, $0<\lambda\leq 1$, we have
\begin{equation}\label{eq:reg1}
\lambda \|\xi\|^2\leq\sum_{i,j=1}^N\sigma^{ij}(x)\xi_i\xi_j\leq \lambda^{-1}\|\xi\|^2\quad \mbox{for any $\xi\in\mathbb{R}^N$ and for a.e. $x\in\mathbb{R}^N$}.
\end{equation}
Let $q=q_1+i q_2=q(x)$, $x\in\mathbb{R}^N$, be a complex-valued bounded measurable function with real and imaginary parts $q_1$ and $q_2$ respectively, such that, for some $\lambda$, $0<\lambda\leq 1$, we have
\begin{equation}\label{eq:reg2}
q_1(x)\geq \lambda,\quad q_2(x)\geq 0\quad \mbox{for a.e. $x\in\mathbb{R}^N$}.
\end{equation}
Furthermore, we assume that $q(x)=q_0:=1$ and $\sigma^{ij}(x)=\sigma_0^{ij}:=\delta_{ij}$ for $x\in\mathbb{R}^N\backslash\overline{\Omega}$, where $\delta_{ij}$ denotes the Kronecker delta function. In the following, \eqref{eq:reg1} and \eqref{eq:reg2} will be referred to as the {\it regular conditions} on $\sigma$ and $q$, and $\lambda$ is called the {\it regular constant}.

Next, we introduce the time-harmonic wave scattering governed by the Helmholtz equation
whose weak solution is $u=u(x,d,k)$, $x\in\mathbb{R}^N$, where $d\in\mathbb{S}^{N-1}$, $k\in\mathbb{R}_+$,
\begin{equation}\label{eq:Helmholtz II}
\begin{cases}
\ \displaystyle{\mathrm{div}(\sigma\nabla u)+{k^2}q u}=0& \hspace*{-1.7cm} \text{in }\mathbb{R}^N,\\
\ \text{$u(x,d,k)-e^{ikx\cdot d}$ satisfies the radiation condition}.
\end{cases}
\end{equation}
The last statement in \eqref{eq:Helmholtz II} means that if one lets $u^s(x,d,k)=u(x,d,k)-e^{ikx\cdot d}$, then
\begin{equation}\label{eq:sommerfeldn}
\lim_{r\rightarrow\infty}r^{\frac{N-1}{2}}\left(\frac{\partial u^s(x)}{\partial r}-i k u^s(x)\right )=0, \quad r=\|x\|.
\end{equation}

In the physical situation, \eqref{eq:Helmholtz II} can be used to describe the time-harmonic acoustic scattering due to an inhomogeneous acoustical medium $(\Omega; \sigma, q)$ located in an otherwise uniformly homogeneous space $(\mathbb{R}^N\backslash\overline{\Omega}; \sigma_0, q_0)$. $\sigma$ and $q$, respectively, denote the density tensor and acoustic modulus of the acoustical medium, and $u(x)$ denotes the wave pressure with $U(x,t):=u(x) e^{-ik t}$ representing the wave field satisfying the scalar wave equation
\[
q(x)U_{tt}(x,t)-\sum_{i,j=1}^N \frac{\partial}{\partial x_i}\left(\sigma^{ij}(x) \frac{\partial}{\partial x_j}U(x,t) \right )=0\qquad\mbox{in\ \ $\mathbb{R}^N\times \mathbb{R}$}.
\]
The function $u^i(x):=e^{ik x\cdot d}$ is an incident plane wave with $k$ denoting the wave number and $d\in\mathbb{S}^{N-1}$ denoting the impinging direction. $u(x)$ is called the total wave field and $u^s(x)$ is called scattered wave field, which is the perturbation of the incident plane wave caused by the presence of the inhomogeneity $(\Omega; \sigma, q)$ in the whole space. Indeed, it is easily seen that if there is no presence of the inhomogeneity, $u^s$ will be vanishing. For the particular case with $N=2$, \eqref{eq:Helmholtz II} can also be used to describe the transverse-electric (TE) polarized electromagnetic (EM) wave propagation with the presence of an infinitely long cylindrical EM inhomogeneity $(\Omega; \sigma, q)$ (see, e.g., \cite{CakCol}). In this case, $\varepsilon:=\Re q$, $\gamma:=k \Im q$ and $\mu:=\sigma^{-1}$ denote, respectively, the electric permittivity, conductivity and magnetic permeability, where $\Re$ and $\Im$ denote taking the
respective real and imaginary parts. We refer to \cite{ColKre,Ned} for related physical background. In the rest of the paper, in order to ease the exposition, we stick to the terminologies with the acoustic scattering.

We recall that by a weak solution to \eqref{eq:Helmholtz II}  we mean that $u\in H^1_{loc}(\mathbb{R}^N)$ and that it satisfies
$$\int_{\mathbb{R}^N}\sigma\nabla u\cdot\varphi-k^2qu\varphi=0\quad\text{for any }\varphi\in C^{\infty}_0(\mathbb{R}^N).$$
The limit in \eqref{eq:sommerfeldn} has to hold uniformly for every direction $\hat{x}=x/\|x\|\in \mathbb{S}^{N-1}$ and is also known as the {\it Sommerfeld radiation condition} which characterizes the radiating nature of the scattered wave field $u^s$ (cf. \cite{ColKre,Ned}). There exists a unique weak solution $u(x,d,k)=u^-\chi_{\Omega}+u^+\chi_{\mathbb{R}^N\backslash\overline{\Omega}}\in H_{loc}^1(\mathbb{R}^N)$ to \eqref{eq:Helmholtz II}, and we refer to Appendix in \cite{LSSZ} for a convenient proof. We remark that, if the coefficients are regular enough, \eqref{eq:Helmholtz II} corresponds to the following classical transmission problem
\begin{equation}\label{eq:Helmholtz IIclassic}
\begin{cases}
\ \displaystyle{\sum_{i,j=1}^{N}\frac{\partial}{\partial x_i}\left(\sigma^{ij}\frac{\partial}{\partial x_j} u^-(x,d,k)\right)+{k^2}q u^-(x,d,k)}=0\qquad\qquad & x\in\Omega,\\
\ \displaystyle{\Delta u^+(x,d, k)+k^2 u^+(x,d,k)=0}\quad & x\in \mathbb{R}^N\backslash\overline{\Omega},\\
\ \displaystyle{u^-(x)=u^+(x),\ \ \sum_{i,j=1}^N(\nu_i\sigma^{ij}\frac{\partial u^-}{\partial x_j})(x)=(\nu\cdot\nabla u^+)(x)}\quad & x\in\partial\Omega,\\
\ \displaystyle{u^+(x,d,k)=e^{i k x\cdot d}+u^s(x,d,k)}\quad & x\in\mathbb{R}^N\backslash\overline{\Omega},\\
\ \displaystyle{\lim_{r\rightarrow\infty}r^{\frac{N-1}{2}}\left(\frac{\partial u^s(x)}{\partial r}-i k u^s(x)\right )=0}\quad & r=\|x\|,
\end{cases}
\end{equation}
where $\nu=(\nu_i)_{i=1}^N$ is the outward unit normal vector to $\partial\Omega$.

Furthermore, $u(x)$ admits the following asymptotic
development as $\|x\|\rightarrow+\infty$
\begin{equation}\label{eq:asymptotic}
u(x,d,k)=e^{ik x\cdot d}+\frac{e^{i k \|x\|}}{\|x\|^{\frac{N-1}{2}}}u_\infty\left(\frac{x}{\|x\|},d,k\right)+\mathcal{O}\left(\frac{1}{\|x\|^{\frac{N+1}{2}}}\right).
\end{equation}
In \eqref{eq:asymptotic}, $u_\infty(\hat{x},d,k)$ with $\hat{x}:=x/\|x\|\in\mathbb{S}^{N-1}$ is known as the {\it far-field pattern} or the {\it scattering
amplitude}, which depends on the impinging direction $d$ and wave number $k$ of the incident wave $u^i(x):=e^{ik x\cdot d}$, observation direction $\hat{x}$, and obviously, also the underlying
scattering object $(\Omega;\sigma,q)$. In the following, we shall also write $u_\infty(\hat{x}, d; (\Omega; \sigma, q))$ to indicate such dependences, noting that we consider $k$ to be fixed and we drop the dependence on $k$.
An important inverse scattering problem arising from practical applications is to recover the medium $(\Omega; \sigma, q)$ by knowing of $u_\infty(\hat{x},d)$.  This inverse problem is of fundamental importance to many areas of science and technology, such as radar and sonar, geophysical exploration, non-destructive testing, and medical imaging to name just a few; see \cite{ColKre,Isa} and the references therein. In this work, we shall be mainly concerned with the invisibility cloaking for
the inverse scattering problem, which could be generally introduced as follows.

\begin{defin}\label{def:cloaking device}
Let $\Omega$ and $D$ be bounded Lipschitz domains such that $D\Subset\Omega$.
$\Omega\backslash\overline{D}$ and $D$ represent, respectively, the
cloaking region and the cloaked region. Let $\Gamma$ and $\Gamma'$
be two subsets of $\mathbb{S}^{N-1}$. $(\Omega\backslash\overline{D};
\sigma_c, q_c)$ is said to be an (ideal/perfect) {\it invisibility
cloaking device} for the region $D$ if
\begin{equation}\label{eq:definvisibility}
u_\infty\left(\hat{x},d; (\Omega;\sigma_e,q_e) \right)=0\quad \mbox{for}\ \
\hat{x}\in\Gamma,\ d\in\Gamma',
\end{equation}
where the extended object
\[
(\Omega;\sigma_e,q_e)=\begin{cases}
\ \sigma_a, q_a\quad & \mbox{in\ \ $D$},\\
\ \sigma_c,q_c\quad & \mbox{in\ \ $\Omega\backslash\overline{D}$},
\end{cases}
\]
with $(D; \sigma_a,q_a)$ denoting a target medium. If $\Gamma=\Gamma'=\mathbb{S}^{N-1}$, then it is called a {\it
full cloak}, otherwise it is called a {\it partial cloak} with
limited apertures $\Gamma$ of observation angles, and $\Gamma'$ of
impinging angles.
\end{defin}

By Definition~\ref{def:cloaking device}, we have that the cloaking layer $(\Omega\backslash \overline{D}; \sigma_c, q_c)$ makes the target medium $(D;\sigma_a, q_a)$ invisible to the exterior scattering measurements when the detecting waves come from the aperture $\Gamma'$ and the observations are made in the aperture $\Gamma$.

One efficient way of constructing the invisibility cloak that has received significant attentions in recent years is the so-called transformation optics  \cite{GLU,GLU2,Leo,PenSchSmi}. By taking advantage of the push-forward properties of the material parameters $\sigma$ and $q$, the transformation optics approach via a blow-up transformation in constructing an (ideal) invisibility cloak can be simply described as follows. Let $(\Omega;\sigma_0,q_0)$ be selected for constructing the cloaking device, and let $P\in\Omega$ be a point. $(\Omega\backslash P; \sigma_0, q_0)$ lives in the so-called {\it virtual space}. Suppose that there exists a transformation $F$ which blows up the point $P$ to an open subset $D$ within $\Omega$. The homogeneous virtual space is then pushed-forward to form the cloaking layer $(\Omega\backslash\overline{D};\sigma_c, q_c)$. The cloaking layer together with a filling-in target medium $(D;\sigma_a,q_a)$ forms the cloaking device, which lives in the so-called {\it physical space}. Due to the transformation invariance of the  Helmholtz equation, it can be heuristically argued that the scattering amplitude in the physical space is the same as the scattering amplitude in the virtual space. Since the scatterer in the virtual space is a singular point $P$, whose scattering effect is negligible, this implies that the scattering amplitude in the physical space is also vanishing. Here, we would like to emphasize that from a practical viewpoint, the target medium should be arbitrary or as general as possible, and this viewpoint shall be adopted throughout our current study. The blow-up-a-point construction yields singular cloaking materials, namely, the material parameters violate the regular conditions. The singular media present a great challenge for both theoretical analysis and practical fabrications (cf. \cite{GKLU3,LZ1}). In order to avoid the singular structure, several regularized constructions have been developed. In \cite{GKLUoe,GKLU_2,RYNQ}, a truncation of singularities has been introduced. In \cite{KOVW,KSVW,Liu}, the `blow-up-a-point' transformation in \cite{GLU2,Leo,PenSchSmi} has been regularized to become the `blow-up-a-small-region' transformation. In the current study, we shall adopt the latter one for the construction of our cloaking device. Nevertheless, as pointed out in \cite{KocLiuSunUhl}, the truncation-of-singularity construction and the blow-up-a-small-region construction are equivalent to each other. Hence, all the obtained results in this work equally hold for the truncation-of-singularity construction. Instead of ideal/perfect invisibility, one would consider approximate/near invisibility for a regularized construction; that is, one intends to make the corresponding scattering amplitude due to a regularized cloaking device as small as possible depending on an asymptotically small regularization parameter $\varepsilon\in\mathbb{R}_+$. This is the main subject of study for the present paper.

Due to its practical importance, the approximate cloaking has recently been extensively studied. In \cite{Ammari1,KSVW}, approximate cloaking schemes were developed for EIT (electric impedance tomography) which might be regarded as optics at zero frequency. In \cite{Ammari2,Ammari3,KOVW,LiLiuSun,LiuSun,Liu,N1,N2}, various near-cloaking schemes were presented for scalar waves governed by the Helmholtz equation. In all the aforementioned work, the constructions of the cloaking layer $(\Omega\backslash\overline{D}; \sigma_c^\varepsilon, q_c^\varepsilon)$ are based on blowing up a uniformly small neighborhood $P_\varepsilon$ of a singular point $P$; namely, $P_\varepsilon$ degenerates to the single point $P$ as $\varepsilon\rightarrow +0$. In order to stabilize and enhance the accuracy of approximation of the near-cloaks, various mechanisms have been developed in those literatures. Particularly, we would like to note that, in \cite{KOVW}, it is shown that the regularized approximate cloak is unstable due to the existence of cloak-busting inclusions, and the authors propose to incorporate a special lossy layer to stabilize the approximation. A different lossy layer was proposed and investigated in \cite{LiLiuSun,LiuSun}. The cloaking of impenetrable obstacles, which could be taken as lossy mediums with extreme material parameters, were considered in \cite{Ammari1,Ammari2,Ammari3} and \cite{Liu}. Also, we would like to point out that, in all those studies, the approximate full invisibility cloaks were obtained.

In the present work, we develop a very  general theory on the regularized approximate invisibility cloaking for the wave scattering governed by the Helmholtz equation in any space dimensions $N\geq 2$ via the approach of transformation optics. First, the non-singular cloaking medium is obtained by the push-forwarding construction through a transformation which blows up a subset $K_\varepsilon$ in the virtual space, with $K_\varepsilon$ degenerating to $K_0$ as $\varepsilon\rightarrow +0$. In our theory, $K_0$ could be very general. It could be any convex compact subset in $\mathbb{R}^N$, or any set whose boundary consists of Lipschitz hypersurfaces, or a finite combination of those sets. For example, in $\mathbb{R}^3$, it could be a single point, or a line segment, or a bounded planar subset. This includes all the existing studies in the literature by blowing up `point-like' regions as a very special case.  Second, in order to stabilize the approximation process, a lossy layer with the material parameters satisfying certain mild compatibility integral conditions is employed right between the cloaked and cloaking regions. The lossy layer is also very general and could be variable and even be anisotropic. Third, the proposed cloaking scheme is shown to be capable of nearly cloaking an very general content, possibly including, at the same time, generic passive mediums, and sound-soft, sound-hard, and impedance-type obstacles, and some active sources or sinks as well. Finally, in order to achieve a cloaking device of compact size, particularly for the case when $K_\varepsilon$ is not `point-like', assembled-by-components (ABC) geometry is developed for both the virtual and physical spaces and the blow-up construction is based on concatenating different components. Within the proposed framework, we show that the scattered wave field $u_\varepsilon$ corresponding to a cloaking problem will converge to $u_0$ as $\varepsilon\rightarrow +0$, with $u_0$ being the scattered wave field corresponding to a sound-hard $K_0$. The convergence result is used to theoretically justify the approximate full and partial invisibility cloaks, depending on the geometry of $K_0$. On the other hand, the convergence results are conducted in a much more general setting than what is needed for the invisibility cloaking, so they are of significant mathematical interest for their own sake. As for applications, we construct three types of full and partial cloaks. Some numerical experiments are also conducted to illustrate our theoretical results.

It is interesting to note that in addition to the blow-up-a-single-point construction, the cloaking constructions based on blowing up an arc curve or a planar rectangle are also proposed and investigated in \cite{GKLU2,LiP}, and they respectively yield the so-called electromagnetic wormholes and carpet-cloaking. As discussed earlier, the regularized blow-up-a-single-point construction, namely the blow-up-a-small-region construction, has been extensively studied in the literature. Using the general framework developed in the present work, one can easily construct the regularized electromagnetic wormholes and carpet-cloaking by employing non-singular materials. In this paper, we focus entirely on the transformation optics approach in achieving the cloaks. We would like to mention in passing other promising cloaking techniques which we did not consider in the present study including the one based on anomalous localized resonance \cite{Ammari0,MN}, and another one based on special (object-dependent) coatings \cite{AE}.

The rest of the paper is organized as follows. In the next section, we present the general blow-up construction of the proposed regularized cloaks and give some relevant discussions. Section 3 is devoted to the convergence analysis in the virtual space. Section 4 is on the application of the results obtained in Section 3 to the construction of full and partial cloaks in the physical space. In Section 5, we develop the ABC-geometry for both the virtual and physical spaces, and construct three types of full and partial cloaks that are new to the literature. Finally, in Section 6, we give some numerical simulations.

\section{General construction of the regularized cloaks}\label{sect:general construction}

In this section, we shall give the general construction of a regularized cloaking device via the transformation optics approach based on a blow-up mapping between the virtual and the physical spaces. The main purpose of this section is to pave the way for our convergence analysis study in the virtual space that shall be conducted in the next section.
We first give a definition of an admissible acoustic configuration.

For any $x\in\mathbb{R}^N$, $N\geq 2$,  we denote $x=(x',x_N)\in\mathbb{R}^{N-1}\times \mathbb{R}$ and $x=(x'',x_{N-1},x_N)\in\mathbb{R}^{N-2}\times\mathbb{R}\times\mathbb{R}$.
For any $r>0$ and any $x\in\mathbb{R}^N$,
$B_r(x)$ denotes the Euclidean ball contained in $\mathbb{R}^N$ with radius $r$ and center $x$, whereas $B'_r(x')$ denotes the Euclideean ball contained in $\mathbb{R}^{N-1}$ with radius $r$ and center $x'$.
Moreover, $B_r=B_r(0)$ and $B'_r=B'_r(0)$. Finally, for any $E\subset \mathbb{R}^N$, we denote $B_r(E)=\bigcup_{x\in E}B_r(x)$.

\begin{defin}
We say that $K\subset \mathbb{R}^N$ is a \emph{scatterer} if $K$ is compact and $G=\mathbb{R}^N\backslash K$ is connected.

A scatterer $K\subset \overline{B_R}$, for some $R>0$, is \emph{regular} if the immersion
$W^{1,2}(B_{R+1}\backslash K)\to L^2(B_{R+1}\backslash K)$ is compact.

We say that a scatterer $K$ is \emph{Lipschitz-regular} if, for some positive constants $r$, $L_1$ and $L_2$, for any $x\in\partial K$ there exists a bi-Lipschitz function $\Phi_x: B_{r}(x)\to\mathbb{R}^N$ such that the following properties hold.
First, for any $z_1$, $z_2\in B_{r}(x)$ we have
$$L_1\|z_1-z_2\|\leq\|\Phi_x(z_1)-\Phi_x(z_2)\|\leq L_2\|z_1-z_2\|.$$
Second, $\Phi_x(x)=0$ and
$\Phi_x(\partial K\cap B_{r}(x))\subset \pi=\{y\in\mathbb{R}^N:\ y_N=0\}$.

A scatterer $K$ is said to be \emph{Lipschitz} if, for some positive constants $r$ and $L$, the following assumptions hold.

For any $x\in\partial K$, there exists a function $\varphi:\mathbb{R}^{N-1}\to\mathbb{R}$, such that $\varphi(0)=0$ and which is Lipschitz with Lipschitz constant bounded by $L$, such that, up to a rigid change of coordinates, we have $x=0$ and
$$B_r(x)\cap \partial K\subset \{y\in B_r(x): y_N=\varphi(y')\}.$$

We say that $x\in \partial K$ belongs to the interior of $\partial K$ if there exists $\delta$, $0<\delta\leq r$, such that $B_{\delta}(x)\cap \partial K = \{y\in B_{\delta}(x): y_N=\varphi(y')\}$. Otherwise we say that $x$ belongs to the boundary of $\partial K$. We remark that the boundary of $\partial K$ might be empty and that, if $x\in \partial K$ belongs to the interior of $\partial K$, then $K$ may lie at most on one side of $\partial K$, that is $B_{\delta}(x)\cap K=B_{\delta}(x)\cap \partial K$, or
$B_{\delta}(x)\cap K=\{y\in B_{\delta}(x): y_N\geq \varphi(y')\}$, or
$B_{\delta}(x)\cap K=\{y\in B_{\delta}(x): y_N\leq \varphi(y')\}$.

For any $x$ belonging to the boundary of $\partial K$, we assume  that there exists another function
$\varphi_1:\mathbb{R}^{N-2}\to\mathbb{R}$, such that $\varphi_1(0)=0$ and which is Lipschitz with Lipschitz constant bounded by $L$, such that, up to the previous rigid change of coordinates, we have $x=0$ and
$$B_r(x)\cap \partial K= \{y\in B_r(x): y_N=\varphi(y'),\ y_{N-1}\leq\varphi_1(y'')\}.$$

Finally, for any $x\in \partial K$, let $e_1(x),\ldots,e_N(x)$ be the unit vectors representing the orthonormal base of the coordinate system for which the previous representations hold. Then we assume that $e_N(x)$ is a Lipschitz function of $x\in\partial K$, with Lipschitz constant bounded by $L$, and $e_{N-1}(x)$ is a Lipschitz function of $x$, as $x$ varies on the boundary of $\partial K$, with Lipschitz constant bounded by $L$.
\end{defin}

Properties of Lipschitz scatterers are thoroughly investigated in \cite[Section~4]{Men-Ron}. Let us just notice that a Lipschitz scatterer $K$ is Lipschitz-regular. Furthermore, a Lipschitz-regular scatterer $K\subset \overline{B_R}$ is regular and the immersion
$W^{1,2}(B_{R+1}\backslash K)\to L^2(\partial K)$ is compact. Notice that for a connected component $\tilde{K}$ of $K$ with empty interior, with a slight abuse of notation, with $\partial \tilde{K}$ we denote two copies of $\tilde{K}$, $(\tilde{K}^+,\tilde{K}^-)$, and by $L^2(\partial \tilde{K})$ we denote the couple $(u^+,u^-)\in L^2(\tilde{K}^+)\times L^2(\tilde{K}^-)$. In such a way we can define the trace of a function $u\in W^{1,2}(B_{R+1}\backslash K)$ on both sides of $\tilde{K}$.

\begin{defin}
We fix $k>0$. An acoustic configuration
$\mathcal{C}=(K_1,K_2,K_3,s,\sigma,q,h,H)$
is \emph{admissible} if the following assumptions hold.

There exist three scatterers $K_1$, $K_2$, $K_3$ which are pairwise disjoint (possibly some or all of them may be the empty set) and such that $K_2$ is regular and $K_3$ is Lipschitz-regular. We set $K=K_1\cup K_2\cup K_3$.

Let $s=s_1+i s_2=s(x)$, $x\in\partial K_3$, be a
complex-valued bounded $\mathcal{H}^{N-1}$-measurable function, with real and imaginary part $s_1$ and $s_2$ respectively, such that
$$
s_1(x)\leq  0\quad\text{and}\quad
s_2(x)\geq 0
\qquad\text{for $\mathcal{H}^{N-1}$-a.e. }x\in \partial K_3.$$
Again, on a connected component $\tilde{K}$ of $K_3$ with empty interior,
$s\in L^{\infty}(\partial \tilde{K})$ means $(s^+,s^-)\in L^{\infty}(\tilde{K}^+)\times L^{\infty}(\tilde{K}^-)$.

Let $\sigma=\sigma(x)$, $x\in\mathbb{R}^N\backslash K$, be an $N\times N$ symmetric matrix whose entries are real-valued measurable functions such that, for some $\lambda$, $0<\lambda\leq 1$, we have
$$\lambda\|\xi\|^2\leq\sigma(x)\xi\cdot\xi\leq \lambda^{-1}\|\xi\|^2\quad\text{for any }\xi\in\mathbb{R}^N\text{ and for a.e. }x\in\mathbb{R}^N\backslash K.$$

Let $q=q_1+i q_2=q(x)$, $x\in\mathbb{R}^N\backslash K$, be a
complex-valued bounded measurable function, with real and imaginary part $q_1$ and $q_2$ respectively, such that, for some $\lambda$, $0<\lambda\leq 1$, we have
$$
q_1(x)\geq \lambda\quad\text{and}\quad
q_2(x)\geq 0\qquad
\text{for a.e. }x\in\mathbb{R}^N\backslash K.$$
Furthermore, we assume that
$$\sigma(x)\equiv I\quad\text{and}\quad q(x)\equiv 1\quad\text{for any $x$ outside a compact subset}.$$

The source term $(h,H)\in L^2(\mathbb{R}^N\backslash K,\mathbb{C}\times \mathbb{C}^N)$ and it has compact support.

Finally, we require that the following scattering problem has only a trivial weak solution
\begin{equation}\label{uniquenesspbm}
\left\{\begin{array}{ll}
\mathrm{div}(\sigma\nabla u)+k^2qu=0 & \text{in }\mathbb{R}^N\backslash K,\\
u=0 & \text{on }\partial K_1,\\
\sigma\nabla u\cdot\nu=0 &\text{on }\partial K_2,\\
\sigma\nabla u\cdot\nu-su=0 &\text{on }\partial K_3,\\
\text{$u$ satisfies the radiation condition,}
\end{array}\right.
\end{equation}
where $\nu$ denotes the exterior normal to $G=\mathbb{R}^N\backslash K$.
\end{defin}

Physically speaking, we have that $K_1$ is a sound-soft scatterer, $K_2$ is sound-hard and we have an impedance boundary condition on $K_3$. The admissible configuration $\mathcal{C}$ represents the scattering object for our study, which consists of the impenetrable obstacle $(K,s)$, the passive medium $(\sigma,q)$, and the active source/sink $(h,H)$. We note the following result that can be proved in a standard way (cf. \cite{LSSZ}).

\begin{prop}\label{existence}
Let $u^i$ be an entire solution to the Helmholtz equation $\Delta u+k^2u=0$ in $\mathbb{R}^N$.
Let us consider an admissible configuration $\mathcal{C}$ in $\mathbb{R}^N$. Then there exists a unique weak solution to the following scattering problem
\begin{equation}\label{scatpbm}
\left\{\begin{array}{ll}
\mathrm{div}(\sigma\nabla u)+k^2qu=-h+\mathrm{div}(H) & \text{in }\mathbb{R}^N\backslash K,\\
u=0 & \text{on }\partial K_1,\\
\sigma\nabla u\cdot\nu=0 &\text{on }\partial K_2,\\
\sigma\nabla u\cdot\nu-su=0 &\text{on }\partial K_3,\\
\text{$u-u^i$ satisfies the radiation condition.}
\end{array}\right.
\end{equation}
\end{prop}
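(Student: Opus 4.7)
The plan is to reduce \eqref{scatpbm} to a problem on the bounded domain $\Omega_R=B_R\backslash K$ via a Dirichlet-to-Neumann operator on $\partial B_R$, and then apply the Fredholm alternative, drawing uniqueness from the admissibility of $\mathcal{C}$.

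I would first pass to the scattered field $u^s=u-u^i$. Since $u^i$ is entire and $(\sigma,q)=(I,1)$ outside a compact set, $u^s$ satisfies the same PDE but with the extra, compactly supported source $-\mathrm{div}((\sigma-I)\nabla u^i)-k^2(q-1)u^i$, with inhomogeneous boundary data on $\partial K_1,\partial K_2,\partial K_3$ obtained from the traces of $u^i$, and with $u^s$ itself satisfying the radiation condition. I then fix $R>0$ so that $B_R$ contains $K$, $\mathrm{supp}(h,H)$, and the set where $(\sigma,q)\neq(I,1)$.

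Next, I introduce the exterior Dirichlet-to-Neumann operator $\Lambda_k\colon H^{1/2}(\partial B_R)\to H^{-1/2}(\partial B_R)$ associated with the radiating solution of $\Delta w+k^2w=0$ in $\mathbb{R}^N\backslash\overline{B_R}$. The problem becomes: find $u^s\in V:=\{v\in H^1(\Omega_R):\ v|_{\partial K_1}=0\}$ such that $a(u^s,v)=\ell(v)$ for every $v\in V$, with
\[
a(u,v)=\int_{\Omega_R}\sigma\nabla u\cdot\overline{\nabla v}-k^2\int_{\Omega_R}qu\overline{v}-\int_{\partial K_3}su\overline{v}-\int_{\partial B_R}(\Lambda_k u)\overline{v},
\]
and $\ell$ a continuous antilinear form on $V$ encoding the data. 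I split $a=a_1+a_2$, where
\[
a_1(u,v)=\int_{\Omega_R}\sigma\nabla u\cdot\overline{\nabla v}+\int_{\Omega_R}u\overline{v}-\int_{\partial B_R}(\Lambda_0 u)\overline{v},
\]
$\Lambda_0$ being the DtN map for $-\Delta+1$, and $a_2=a-a_1$. Uniform ellipticity of $\sigma$, the sign condition $s_1\leq 0$, and the standard negativity of $\mathrm{Re}\,\Lambda_0$ yield coercivity of $a_1$ on $V$ (with the Dirichlet constraint on $\partial K_1$, or else thanks to the $\int|v|^2$ term if $K_1=\emptyset$). The remainder $a_2$ represents a compact operator on $V$: the embedding $V\hookrightarrow L^2(\Omega_R)$ is compact by the regularity of $K_2$, the trace $V\to L^2(\partial K_3)$ is compact by the Lipschitz-regularity of $K_3$, and $\Lambda_k-\Lambda_0$ is a smoothing pseudodifferential operator, hence compact.

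By the Fredholm alternative the operator $V\to V^*$ induced by $a$ has index zero, so existence follows from injectivity. Any element of its kernel, extended to $\mathbb{R}^N\backslash K$ by the unique radiating prolongation matching $\Lambda_k(u^s|_{\partial B_R})$, satisfies \eqref{uniquenesspbm}; admissibility of $\mathcal{C}$ forces it to vanish. Patching the bounded-domain solution with its exterior radiating extension gives a solution of \eqref{scatpbm} on $\mathbb{R}^N\backslash K$, and uniqueness is immediate since the difference of two solutions again solves \eqref{uniquenesspbm}. The main obstacle is justifying the two compactness assertions on $V\hookrightarrow L^2(\Omega_R)$ and $V\to L^2(\partial K_3)$ under the weak hypotheses that $K_2$ is merely regular and $K_3$ merely Lipschitz-regular in the senses defined above; this is precisely what the structural properties recorded immediately after those definitions are designed to provide, and once they are invoked the Fredholm argument closes cleanly.
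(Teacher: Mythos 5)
The paper itself gives no proof of this proposition; it merely notes that the result can be proved ``in a standard way'' and refers to the appendix of \cite{LSSZ}. Your argument is precisely that standard route: reduce to $B_R\setminus K$ via the exterior Dirichlet-to-Neumann map $\Lambda_k$, split the sesquilinear form into a coercive part $a_1$ (using the sign property $\mathrm{Re}\langle\Lambda_0 u,u\rangle\le 0$ of the DtN map for $-\Delta+1$) plus a compact remainder $a_2$, apply the Fredholm alternative, and deduce uniqueness from the admissibility condition. The required compactness facts are exactly what the definitions of regular and Lipschitz-regular scatterer are designed to supply, as you observe.

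There is one genuine slip in the write-up: you seek $u^s$ in $V=\{v\in H^1(\Omega_R):\ v|_{\partial K_1}=0\}$, but $u^s=u-u^i$ equals $-u^i\neq 0$ on $\partial K_1$, and inhomogeneous \emph{Dirichlet} data cannot be absorbed into the antilinear functional $\ell$ (only the Neumann and impedance traces of $u^i$ can). The standard fix is a lifting: pick a smooth cutoff $\chi$ equal to $1$ near $K_1$ and supported away from $K_2$, $K_3$ and $\partial B_R$, write $u^s=-\chi u^i+w$, and solve the variational problem for $w\in V$, which now has a right-hand side in $V^*$; the Fredholm argument then closes as you describe. A minor cosmetic point: the coercivity of $a_1$ already follows from the added $\int_{\Omega_R}|v|^2$ term together with the sign of $\Lambda_0$, so the parenthetical fallback on the Dirichlet constraint on $\partial K_1$ is not needed.
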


If we take $u^i(x)=e^{ik x\cdot d}$, $x\in\mathbb{R}^N$, to be the plane wave, then the solution $u$ to \eqref{scatpbm} has exactly the same asymptotic development as that in \eqref{eq:asymptotic}, and we denote by
$u_{\infty}(\hat{x},d;\mathcal{C})$, $\hat{x}\in\mathbb{S}^{N-1}$, the far-field pattern of the scattered field $u^s$.

Next, we present a lemma with some key ingredients of the transformation optics, the proofs of which are available in \cite{GKLU4,KOVW,Liu}. The main remark is that the definition of an admissible configuration is stable under bi-Lipschitz transformations.

\begin{lem}\label{lem:trans acoustics}
Let $\Omega$ and $\widetilde{\Omega}$ be two Lipschitz domains in $\mathbb{R}^N$ and $\widetilde
x=F(x):\Omega\rightarrow\widetilde\Omega$ be a bi-Lipschitz and
orientation-preserving mapping.

Let $\mathcal{C}=(\Omega;K_1,K_2,K_3,s,\sigma,q,h,H)$
be an admissible configuration in $\Omega$. For simplicity we assume that $K$ and the supports of $h$ and $H$ are contained in $\Omega$. We let the {\it push-forwarded} configuration be
defined as
\begin{multline}\label{eq:pushforward}
\widetilde{\mathcal{C}}=(\widetilde{\Omega};\widetilde{K}_1,\widetilde{K}_2,\widetilde{K}_3,\widetilde{s},\widetilde{\sigma},\widetilde{q},\widetilde{h},\widetilde{H})
=F_*(\Omega;K_1,K_2,K_3,s,\sigma,q,h,H):=\\(F(\Omega); F(K_1),F(K_2),F(K_3),F_*s,
F_*\sigma, F_*q,
F_*h,F_*H).
\end{multline}

Notice that $\widetilde{K}_1$ is a scatterer, whereas $\widetilde{K}_2$ is a regular scatterer and $\widetilde{K}_3$ is a Lipschitz-regular scatterer. Moreover,
\begin{equation}
\widetilde{s}(\widetilde{x})=F_*s(\widetilde x):=\frac{s(x)}{|\mbox{\emph{det}}(D_{\tau}F(x))|}\bigg|_{x=F^{-1}(\widetilde x)},
\end{equation}
where $D_{\tau}F$ denotes the tangential component of the Jacobian matrix of $F$.
We also have
\begin{equation}
\begin{split}
&\widetilde{\sigma}(\widetilde
x)=F_*\sigma(\widetilde x):=\frac{DF(x)\cdot \sigma(x)\cdot DF(x)^T}{|\mbox{\emph{det}}(DF(x))|}\bigg|_{x=F^{-1}(\widetilde x)},\\
&\widetilde{q}(\widetilde x)=F_*q(\widetilde x):=\frac{q(x)}{|\mbox{\emph{det}}(DF(x))|}\bigg|_{x=F^{-1}(\widetilde x)},
\end{split}
\end{equation}
where $DF$ denotes the Jacobian matrix of $F$.
Finally,
\begin{equation}
\begin{split}
&\widetilde{h}(\widetilde{x})=F_*h(\widetilde x):=\frac{h(x)}{|\mbox{\emph{det}}(DF(x))|}\bigg|_{x=F^{-1}(\widetilde x)},\\
&\widetilde{H}(\widetilde{x})=F_*H(\widetilde x):=\frac{DF(x)\cdot H(x)}{|\mbox{\emph{det}}(DF(x))|}\bigg|_{x=F^{-1}(\widetilde x)}.\\
\end{split}
\end{equation}
Then $u\in H^1(\Omega\backslash K)$ solves the Helmholtz equation
\[\left\{\begin{array}{ll}
\nabla\cdot(\sigma\nabla u)+k^2q u=-h+\nabla\cdot H &\mbox{in\ $\Omega\backslash K$,}\\
u=0&\mbox{on\ $\partial K_1$,}\\
\sigma\nabla u\cdot\nu=0&\mbox{on\ $\partial K_2$,}\\
\sigma\nabla u\cdot\nu-su=0&\mbox{on\ $\partial K_3$,}
\end{array}\right.
\]
if and only if the pull-back field $\widetilde u=(F^{-1})^*u:=u\circ F^{-1}\in H^1(\widetilde\Omega\backslash \widetilde{K})$ solves
\[\left\{\begin{array}{ll}
\widetilde\nabla\cdot(\widetilde\sigma\nabla \widetilde u)+k^2\widetilde q \widetilde u=-\widetilde h+\widetilde\nabla\cdot \widetilde H &\mbox{in\ $\widetilde\Omega\backslash \widetilde K$,}\\
\widetilde u=0&\mbox{on\ $\partial \widetilde K_1$,}\\
\widetilde\sigma\nabla \widetilde u\cdot\nu=0&\mbox{on\ $\partial \widetilde K_2$,}\\
\widetilde\sigma\nabla \widetilde u\cdot\nu-\widetilde s\widetilde u=0&\mbox{on\ $\partial \widetilde K_3.$}
\end{array}\right.
\]
We have made use of $\nabla$ and $\widetilde\nabla$ to distinguish the
differentiations respectively in $x$- and $\widetilde x$-coordinates.
\end{lem}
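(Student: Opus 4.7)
The plan is to reduce everything to a change-of-variables computation in the weak formulation, after first verifying that the image data under $F$ still constitute an admissible configuration. I would split the argument into three blocks: (a) geometric/regularity preservation of the scatterers, (b) the interior PDE via change of variables, (c) the three boundary conditions, treating the impedance case last since it is the only one where the tangential Jacobian $D_\tau F$ appears.

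For (a), since $F$ is bi-Lipschitz, images of compact sets are compact and $F$ is a homeomorphism, so $\widetilde{K}_i = F(K_i)$ are pairwise disjoint scatterers (connectedness of $\mathbb{R}^N\setminus \widetilde{K}_i$ follows because $F$ extended as identity outside a large ball remains a homeomorphism, and one can alternatively invoke that $F$ maps $G = \mathbb{R}^N\setminus K$ to $\widetilde G$ homeomorphically). Regularity of $\widetilde{K}_2$ reduces to the fact that $u\mapsto u\circ F^{-1}$ is a bounded isomorphism between $W^{1,2}(B_{R+1}\setminus K_2)$ and $W^{1,2}(\widetilde{B}\setminus \widetilde{K}_2)$ for suitable balls (bi-Lipschitz change of variables preserves Sobolev spaces), so compactness of the embedding transfers. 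Lipschitz-regularity of $\widetilde{K}_3$ follows by composing the local chart $\Phi_x$ at $x\in\partial K_3$ with $F^{-1}$: the map $\Phi_x\circ F^{-1}$ is still bi-Lipschitz with constants controlled by $L_1,L_2$ times the bi-Lipschitz constants of $F$, and maps the image of the boundary into the hyperplane $\{y_N=0\}$. All of this is standard and is exactly what is done in \cite{GKLU4,KOVW,Liu}; I would only sketch it and cite.

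For (b), I would take any $\widetilde\varphi\in C^\infty_0(\widetilde\Omega\setminus\widetilde K)$, pull it back to $\varphi=\widetilde\varphi\circ F$, which is Lipschitz with compact support in $\Omega\setminus K$ and hence an admissible test function. Starting from the weak formulation
$$\int_{\Omega\setminus K}\sigma\nabla u\cdot \nabla\varphi - k^2 q u \varphi\,dx = \int_{\Omega\setminus K}h\,\varphi + H\cdot\nabla\varphi\,dx$$
(plus the boundary integral coming from $\partial K_3$), and applying the change of variables $\widetilde x = F(x)$ together with $\nabla\varphi(x) = DF(x)^T \widetilde\nabla\widetilde\varphi(\widetilde x)$, one picks up the Jacobian $|\det DF|^{-1}$ on every volume integrand. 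Matching terms gives precisely the claimed push-forward formulas for $\widetilde\sigma$, $\widetilde q$, $\widetilde h$, and $\widetilde H$; orientation-preservation of $F$ is used to drop the absolute values in the orientation of $\widetilde\nu$ on $\partial\widetilde K_1,\partial\widetilde K_2$. The reverse implication is the same computation run with $F^{-1}$, which is also bi-Lipschitz. Dirichlet and Neumann boundary conditions transform automatically: the Dirichlet condition $u=0$ on $\partial K_1$ is preserved because $\widetilde u\circ F = u$, and the Neumann condition emerges in the weak formulation as the boundary term one would integrate by parts in the pushed-forward identity, with no extra multiplicative factor.

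The main obstacle, and the only step that really requires care, is (c) the impedance condition on $\partial K_3$. Here the boundary integral $\int_{\partial K_3} s\, u\,\varphi\, d\mathcal{H}^{N-1}$ transforms under $F$ with the Jacobian of the restriction $F|_{\partial K_3}$, whose surface area element is $|\det D_\tau F|\,d\mathcal{H}^{N-1}$. Dividing through by this factor to keep the weak identity in the push-forward form $\int_{\partial\widetilde K_3}\widetilde s\,\widetilde u\,\widetilde\varphi\,d\mathcal{H}^{N-1}$ yields exactly $\widetilde s = s/|\det D_\tau F|$. For this to make sense one needs $D_\tau F$ to be well-defined $\mathcal{H}^{N-1}$-a.e. on $\partial K_3$ and nondegenerate, which follows from Lipschitz-regularity of $K_3$ and Rademacher's theorem applied in local charts; on connected components $\widetilde K$ with empty interior, the two-sided convention on $(\widetilde K^+,\widetilde K^-)$ is preserved because $F$ maps each side to a side. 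The sign conditions $\Re \widetilde s\leq 0$ and $\Im \widetilde s\geq 0$ are preserved since the Jacobian factor is positive, so $\widetilde{\mathcal C}$ is indeed admissible, and the weak identity then reads off the transformed problem. Uniqueness for the transformed problem \eqref{uniquenesspbm} in $\widetilde\Omega$ follows by pulling back with $F^{-1}$ to the original problem which has only the trivial solution.
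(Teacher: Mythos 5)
The paper does not prove this lemma at all: it simply states it and refers the reader to the cited works \cite{GKLU4,KOVW,Liu}, remarking only that ``the definition of an admissible configuration is stable under bi-Lipschitz transformations.'' Against that background, your proposal does what the paper declines to do, and it does so along the standard lines: weak formulation, change of variables $\widetilde x = F(x)$ with $\nabla\varphi = DF^T\widetilde\nabla\widetilde\varphi$ and $dx = |\det DF|^{-1}d\widetilde x$, matching of terms to read off the push-forward formulas, and the surface-measure Jacobian $|\det D_\tau F|$ for the impedance integral. The volume-term computations, the transfer of the compact embedding for $K_2$, the composition-of-charts argument for Lipschitz-regularity of $K_3$, and the use of Rademacher's theorem to make sense of $D_\tau F$ $\mathcal{H}^{N-1}$-a.e.\ on $\partial K_3$ are all correct and exactly the right ingredients.

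One point worth tightening. You introduce the weak formulation by taking $\widetilde\varphi\in C^\infty_0(\widetilde\Omega\setminus\widetilde K)$ and then add, parenthetically, ``plus the boundary integral coming from $\partial K_3$.'' Those two statements are in tension: compactly supported test functions vanish near $\partial K_3$, so the boundary term never appears. The correct test space for this mixed Dirichlet/Neumann/impedance problem is $\{\varphi\in H^1(\Omega\setminus K): \varphi|_{\partial K_1}=0\}$ (intersected with a compact-support condition near $\partial\Omega$ if needed), and the weak identity includes the $\partial K_3$ boundary integral from the start. With test functions from that space you cannot pull back and still land in $C^\infty_0$; you only get a Lipschitz test function in the analogous subspace of $H^1(\widetilde\Omega\setminus\widetilde K)$, which is fine but should be stated as such. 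This is a presentational rather than a substantive gap, since density of Lipschitz functions in the relevant $H^1$-subspace closes it, but as written the weak-formulation step does not literally produce the $\partial K_3$ term. A second small remark: orientation-preservation is not needed to ``drop absolute values'' in the push-forward formulas (they all contain $|\det DF|$ explicitly); what it does ensure is that $F$ carries the exterior of $K$ to the exterior of $\widetilde K$ consistently, so that the trace and normal conventions on $(\widetilde K^+,\widetilde K^-)$ match up, which you do correctly address for the empty-interior components.
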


As a consequence of Lemma~\ref{lem:trans acoustics}, one can directly verify that if $F:\Omega\rightarrow\Omega$ is a bi-Lipschitz map with $F|_{\partial\Omega}=\mbox{Identity}$, then the push-forward of an admissible configuration is again an admissible configuration and
\begin{equation}\label{eq:equal}
u_\infty(\hat{x}, d; \mathcal{C})=u_\infty(\hat{x}, d; \widetilde{\mathcal{C}}),\quad \text{for any }\hat{x}, d\in\mathbb{S}^{N-1}.
\end{equation}
The observation \eqref{eq:equal} is of critical importance for our following general construction of the cloaking scheme. Finally, let us point out that the following quantities remain unchanged under the push-forward
$$\int_{\Omega}(q_i)^{-1}|h|^2=\int_{\widetilde\Omega}(\widetilde q_i)^{-1}|\widetilde h|^2,\ i=1,2,\quad\text{and}\quad
\int_{\Omega}(\sigma)^{-1}H\cdot\overline{H}=\int_{\widetilde\Omega}(\widetilde \sigma)^{-1}\widetilde H\cdot\overline{\widetilde H}.$$

With the above preparations, we are ready to present the general construction of our proposed cloaks.
By a bit abuse of notation, we let from now on $\Omega$, $D$ and $\Sigma$ be the closures of three bounded Lipschitz domains in $\mathbb{R}^N$ such that $\mathbb{R}^N\backslash \Omega$ is connected and $\Sigma\subset \stackrel{\circ}{D}\Subset \stackrel{\circ}{\Omega}$. Let $K_0$ be a compact subset in $\mathbb{R}^N$, and $K_\varepsilon$ be an $\varepsilon$-neighborhood of $K_0$. Both $K_0$ and $K_\varepsilon$ shall be made precise in the next section. We assume there exists a bi-Lipschitz and orientation-preserving mapping $F_\varepsilon$ such that
\begin{equation}\label{eq:FF}
F_\varepsilon=\mbox{Identity}\ \mbox{on\ $\mathbb{R}^N\backslash \Omega$};
\qquad F_\varepsilon^{(1)}\ \mbox{on\ $\Omega\backslash K_\varepsilon$};\qquad F_\varepsilon^{(2)}\ \mbox{on\ $K_\varepsilon$}
\end{equation}
with
\begin{equation}\label{eq:FF1}
F_\varepsilon^{(2)}(K_{\varepsilon/2})=\Sigma,\quad F_\varepsilon^{(2)}(K_{\varepsilon})=D,\quad F_\varepsilon^{(1)}(\Omega\backslash K_\varepsilon)=\Omega\backslash D.
\end{equation}
Let, in the physical space, $\widetilde{\mathcal{C}}^{\varepsilon}=
(\Sigma_1,\Sigma_2,\Sigma_3,s,\widetilde{\sigma}^{\varepsilon},\widetilde{q}^{\varepsilon},h,H)$ be an admissible configuration as described in what follows.
Set $\widetilde \Sigma=\Sigma_1\cup\Sigma_2\cup\Sigma_3$. The following properties are required. First, $\widetilde\Sigma\subset \Sigma$ and  $h$ and $H$ are supported in $\Sigma$.
Second, we have
\begin{equation}\label{eq:phc1}
(\Omega\backslash D; \widetilde\sigma^\varepsilon, \widetilde q^\varepsilon)=(F_\varepsilon)_*(\Omega\backslash K_\varepsilon; I, 1); \quad \widetilde\sigma^\varepsilon(x)=I,\ \ \widetilde q^\varepsilon(x)=1\quad \mbox{for a.e. $x\in\mathbb{R}^N\backslash \Omega$},
\end{equation}
and
\begin{equation}\label{eq:phc2}
(D\backslash\Sigma; \widetilde\sigma^\varepsilon, \widetilde q^\varepsilon)=(F_\varepsilon)_*(K_\varepsilon\backslash K_{\varepsilon/2}; \sigma_l^\varepsilon, q_l^\varepsilon),
\end{equation}
where $(K_\varepsilon\backslash K_{\varepsilon/2}; \sigma_l^\varepsilon, q_l^\varepsilon)$ is a lossy layer that is admissible and shall also be made precise in the subsequent sections. In the physical space, $(\Omega\backslash D; \widetilde\sigma^\varepsilon, \widetilde q^\varepsilon)$ is the cloaking layer, and $(D\backslash\Sigma; \widetilde\sigma^\varepsilon, \widetilde q^\varepsilon)$ is a layer obtained with specially chosen parameters $\sigma^\varepsilon_l$ and $q^\varepsilon_l$. $\Sigma$ is the cloaked region where the target objects are located, and they include the impenetrable obstacle $(\widetilde\Sigma,s)$, the medium  $(\Sigma\backslash\bigcup_{l=1}^3\Sigma_l; \widetilde\sigma^\varepsilon, \widetilde q^\varepsilon)$ and the active source/sink $(h,H)$.
%
Here, we would like to emphasize that in our cloaking scheme, if $h$ and $H$ are zero, namely no source/sink is present inside the cloaked region, then $(\Sigma\backslash\bigcup_{l=1}^3\Sigma_l; \widetilde\sigma^\varepsilon, \widetilde q^\varepsilon)$ and $(\widetilde\Sigma, s)$ could be arbitrary except for admissibility; and only when there is a source/sink present, we would require some generic condition on the cloaked medium $(\Sigma\backslash\bigcup_{l=1}^3\Sigma_l; \widetilde\sigma^\varepsilon, \widetilde q^\varepsilon)$. We finally notice that the configuration inside $\Sigma$ do no depend on $\varepsilon$, that is $\widetilde{\Sigma}$, $s$, $h$ and $H$, and the coefficients $\widetilde{\sigma}^{\varepsilon}$ and $\widetilde{q}^{\varepsilon}$ inside $\Sigma$, do not depend on $\varepsilon$.

 The scattering problem corresponding to the cloaking construction described above is given by
\begin{equation}\label{eq:ps1}
\begin{cases}
\mbox{div}(\widetilde\sigma^\varepsilon \nabla\widetilde u_\varepsilon)+k^2 \widetilde q^\varepsilon \widetilde u_\varepsilon=-h+\mbox{div}(H) \qquad &\mbox{in\ \ $\displaystyle{\mathbb{R}^N\backslash\bigcup_{l=1}^3\Sigma_l}$}\,,\\
\widetilde u_\varepsilon=0 &\mbox{on\ \ $\partial \Sigma_1$},\\
\widetilde\sigma^\varepsilon\nabla \widetilde u_\varepsilon\cdot \nu=0 &\mbox{on\ \ $\partial \Sigma_2$,}\\
\widetilde\sigma^\varepsilon\nabla \widetilde u_\varepsilon\cdot \nu-s\widetilde u_\varepsilon=0 &\mbox{on\ \ $\partial \Sigma_3$,}\\
\text{$\widetilde u_\varepsilon-u^i$ satisfies the radiation condition.}
\end{cases}
\end{equation}
We assume that the configuration is admissible, therefore there exists a unique solution $\widetilde{u}_\varepsilon\in H_{loc}^1(\mathbb{R}^N\backslash\bigcup_{l=1}^3 \Sigma_l)$ to \eqref{eq:ps1}. In the sequel, we let $\widetilde u_\infty^\varepsilon$ denote the corresponding scattering amplitude.

One of the main results of this work is to show the convergence of $\widetilde{u}_\varepsilon$ (and hence $\widetilde u_\infty^\varepsilon$), and then apply it to derive the approximate cloaking in various settings. To that end,
in the virtual space, the admissible configuration $\mathcal{C}^{\varepsilon}=
(K_1^{\varepsilon},K_2^{\varepsilon},K_3^{\varepsilon},s^{\varepsilon},\sigma^{\varepsilon},q^{\varepsilon},h^{\varepsilon},H^{\varepsilon})$
is given by
$$\mathcal{C}^{\varepsilon}=F^{-1}_*(\widetilde{\mathcal{C}}^{\varepsilon}),$$
that is
\begin{equation}\label{eq:obstacle virtual}
F_\varepsilon^{(2)}(K^\varepsilon_l)=\Sigma_l,\quad l=1,2,3,
\end{equation}
and $K^\varepsilon=K_1^\varepsilon\cup K_2^\varepsilon\cup K_3^\varepsilon\subset K_{\varepsilon/2}$.
Furthermore,
$$\sigma^\varepsilon(x)=I,\ \ q^\varepsilon(x)=1\quad \mbox{for a.e. $x\in\mathbb{R}^N\backslash K_{\varepsilon}$},
$$
and
$$\sigma^\varepsilon=\sigma^{\varepsilon}_l,\ \ q^\varepsilon=q^{\varepsilon}_l\quad \mbox{in $K_{\varepsilon}\backslash K_{\varepsilon/2}$}.
$$

We let $u_\varepsilon=(F_\varepsilon)^*\widetilde u_\varepsilon\in H_{loc}^1(\mathbb{R}^N\backslash \bigcup_{l=1}^3 K^\varepsilon_l)$ and,
by Lemma~\ref{lem:trans acoustics}, it is straightforward to verify that $u_\varepsilon$ is the unique solution to
\begin{equation}\label{eq:vs1}
\begin{cases}
\mbox{div}(\sigma^\varepsilon \nabla u_\varepsilon)+k^2  q^\varepsilon  u_\varepsilon=-h^\varepsilon+\mbox{div}(H^{\varepsilon}) \qquad &\mbox{in\ \ $\displaystyle{\mathbb{R}^N\backslash\bigcup_{l=1}^3 K_l^\varepsilon}$},\\
 u_\varepsilon=0 &\mbox{on\ \ $\partial K_1^\varepsilon$},\\
\sigma^\varepsilon\nabla  u_\varepsilon\cdot \nu=0 &\mbox{on\ \ $\partial K_2^\varepsilon$,}\\
\sigma^\varepsilon\nabla u_\varepsilon\cdot \nu-s^\varepsilon u_\varepsilon=0 &\mbox{on\ \ $\partial K_3^\varepsilon$,}\\
\text{$u_\varepsilon-u^i$ satisfies the radiation condition.}
\end{cases}
\end{equation}

We notice that \eqref{eq:ps1} describes the scattering in the physical space corresponding to the cloaking construction, whereas \eqref{eq:vs1} describes the scattering in the virtual space. Clearly, we have $u_\varepsilon=\widetilde u_\varepsilon$ outside a sufficiently large ball. Hence, in order to study the convergence of $\widetilde u_\varepsilon$, it suffices for us to study the convergence of $u_\varepsilon$, which is the main task of Section~\ref{stabsec} in the sequel. We note the following peculiar structure of the virtual scattering problem \eqref{eq:vs1} for our subsequent study. The scattering objects, including the passive penetrable medium and impenetrable obstacles $\bigcup_{l=1}^3 K^\varepsilon_l$, and the active source/sink are included into the innermost region $K_{\varepsilon/2}$, and then they are enclosed by a lossy layer $(K_\varepsilon\backslash K_{\varepsilon/2}; \sigma_l^\varepsilon, q_l^\varepsilon)$. When $K_\varepsilon$ is uniformly small, namely $K_\varepsilon$ will degenerate to a singular point as $\varepsilon\rightarrow +0$, this is the case that has been extensively investigated in \cite{Ammari1,Ammari2,Ammari3,KOVW,KSVW,LiLiuSun,LiuSun}. As emphasized in the introduction, we shall extend such studies to an extremely general setting, especially $K_\varepsilon$ could be `partially small', as will be seen in our subsequent study. The critical scaling arguments developed in \cite{Ammari1,Ammari2,Ammari3,KOVW,KSVW,LiLiuSun,LiuSun} cannot be adapted to treating the much more challenging cases of the current study.

\section{Analysis in the virtual space}\label{stabsec}

This section is devoted to the convergence analysis of the virtual scattering problem \eqref{eq:vs1}. However, our study shall be conducted in a much more general setting than is needed for the cloaking purpose.

Let $K_0$ be a scatterer in $\mathbb{R}^N$.
Let us denote by  $d:\mathbb{R}^N\to[0,+\infty)$ the distance function from $K_0$ defined as follows
$$d(x)=\mathrm{dist}(x,K_0)\quad\text{for any }x\in\mathbb{R}^N.$$

We assume that there exists a Lipschitz function $\tilde{d}:\mathbb{R}^N\to [0,+\infty)$ such that
the following properties are satisfied.

First, there exist constants $a$ and $b$, $0<a\leq1\leq b$, such that
$$ad(x)\leq\tilde{d}(x)\leq bd(x)\quad\text{for any }x\in\mathbb{R}^N.$$

For any $\varepsilon>0$, let us call $K_{\varepsilon}=\{x\in\mathbb{R}^N:\ \tilde{d}(x)\leq \varepsilon\}$.
For some constants $\varepsilon_0>0$, $p>2$, $C_1>0$ and $R>0$, we require that for any $\varepsilon$, $0<\varepsilon\leq \varepsilon_0$,  $K_{\varepsilon}\subset \overline{B_R}$,
$\mathbb{R}^N\backslash K_{\varepsilon}$ is connected
and
$$\|u\|_{L^p(B_{R+1}\backslash K_{\varepsilon})}\leq C_1\|u\|_{H^1(B_{R+1}\backslash K_{\varepsilon})}\quad\text{for any }u\in H^1(B_{R+1}\backslash K_{\varepsilon}).$$

We notice that, under these assumptions, $K_{\varepsilon}$, for any $0\leq\varepsilon\leq\varepsilon_0$, is a regular scatterer.
A simple sufficient condition for these assumptions to hold is that $K_0$ is a compact convex set. In fact, clearly we have that
$\mathbb{R}^N\backslash K_0$ is connected. Then,
we can take $\tilde{d}=d$ or the distance from $K_0$ with respect to any norm on $\mathbb{R}^N$, not only with respect to the Euclidean one. Then, for any $\varepsilon>0$, $K_{\varepsilon}=\overline{B_{\varepsilon}(K_0)}$, clearly with respect to the chosen norm.
For any $\varepsilon>0$, we have that $\overline{B_{\varepsilon}(K)}$ is still a convex set, therefore $\mathbb{R}^N\backslash \overline{B_{\varepsilon}(K_0)}$ is connected and satisfies a cone condition with a cone independent on $\varepsilon$. Hence
also the other required properties are satisfied,
for instance using \cite[Theorem~5.4]{Ada}.

Another sufficient condition is that $K_0$ is a Lipschitz scatterer, see \cite[Section~4]{Men-Ron}.
Finally, $K_0$ may be the union of a finite number of pairwise disjoint
compact convex sets and Lipschitz scatterers.

For any fixed $\varepsilon$, $0<\varepsilon\leq \varepsilon_0$, let us assume that
$\{K_1^{\varepsilon},K_2^{\varepsilon},K_3^{\varepsilon},s^{\varepsilon},\sigma^{\varepsilon},q^{\varepsilon},h^{\varepsilon},H^{\varepsilon}\}$ is an admissible configuration.

Moreover, we require that one of the following two assumptions holds.

\begin{assum}\label{assumption1}
$\{K_1^{\varepsilon},K_2^{\varepsilon},K_3^{\varepsilon},s^{\varepsilon},\sigma^{\varepsilon},q^{\varepsilon},h^{\varepsilon},H^{\varepsilon}\}$ is an admissible configuration with the following properties.
First, $K^{\varepsilon}=K_1^{\varepsilon}\cup K_2^{\varepsilon}\cup K_3^{\varepsilon}\subset K_{\varepsilon/2}$.
Second,
the behavior of $\sigma^{\varepsilon}$ and $q^{\varepsilon}$ is different in the following three regions, $K_{\varepsilon/2}\backslash K^{\varepsilon}$, $K_{\varepsilon}\backslash K_{\varepsilon/2}$
and $\mathbb{R}^N\backslash K_{\varepsilon}$.
\begin{enumerate}[a)]
\item We have that $\sigma^{\varepsilon}(x)=I$ and $q^{\varepsilon}(x)=1$ for almost every $x\in \mathbb{R}^N\backslash K_{\varepsilon}$.
\item There exist a continuous nondecreasing function $\omega_1:(0,\varepsilon_0]\to (0,+\infty)$, such that
$\lim_{s\to 0^+}\omega_1(s)=0$, and positive constants $E_1$, $E'_1$, $\Lambda$, and $E_2$ such that for almost any $x\in K_{\varepsilon}\backslash K_{\varepsilon/2}$
\begin{equation}\label{eq:ll1}
0<q_1^{\varepsilon}(x)\leq E_1(\omega_1(\varepsilon))^{-1}\quad\text{and}\quad 0<E'_1(\omega_1(\varepsilon))^{-1}\leq q_2^{\varepsilon}(x).
\end{equation}
Furthermore
\begin{equation}\label{firstcondsigma}
\sigma^{\varepsilon}(x)\xi\cdot\xi\leq \Lambda\|\xi\|^2\quad\text{for any }\xi\in\mathbb{R}^N\text{ and for a.e. }x\in K_{\varepsilon}\backslash K_{\varepsilon/2}
\end{equation}
and
\begin{equation}\label{secondcondsigmabis}
\frac{1}{\varepsilon^2}\sigma^{\varepsilon}(x)\nabla \tilde{d}(x)\cdot\nabla \tilde{d}(x)\leq E_2\quad\text{for a.e. }x\in K_{\varepsilon}\backslash K_{\varepsilon/2}.
\end{equation}
Finally we require that
\begin{equation}\label{secondassumption}
\lim_{\varepsilon\to 0^+}\int_{K_{\varepsilon}\backslash K_{\varepsilon/2}}|q^{\varepsilon}|=0.
\end{equation}
\item There exists a positive constant $E_3$ such that
$$0<\sup_{K_{\varepsilon/2}\backslash K^{\varepsilon}} q^{\varepsilon}_1\leq E_3\inf_{K_{\varepsilon/2}\backslash K^{\varepsilon}} q^{\varepsilon}_2.$$

We assume that $(h^{\varepsilon},H^{\varepsilon})(x)=0$ for almost every $x\in \mathbb{R}^N\backslash K_{\varepsilon/2}$. Moreover, we have that
\begin{equation}\label{sourceassumption}
\int_{K_{\varepsilon/2}\backslash K^{\varepsilon}}(k^2q_2^{\varepsilon})^{-1}|h^{\varepsilon}|^2 \leq E_3\quad\text{and}\quad\int_{K_{\varepsilon/2}\backslash K^{\varepsilon}}(\sigma^{\varepsilon})^{-1}H^{\varepsilon}\cdot\overline{H^{\varepsilon}}   \leq E_3.
\end{equation}
\end{enumerate}
Here the constants $E_1$, $E'_1$, $\Lambda$, $E_2$ and $E_3$ do not depend on $\varepsilon$.
\end{assum}

We remark that the previously stated assumption, essentially because $q^{\varepsilon}_2>0$ in $K_{\varepsilon}\backslash K^{\varepsilon}$, implies that the corresponding problem \eqref{uniquenesspbm} has only the trivial solution without any further assumption on the obstacles $K_1^{\varepsilon}$, $K_2^{\varepsilon}$, $K_3^{\varepsilon}$ and the coefficients
$s^{\varepsilon}$ and
 $\sigma^{\varepsilon}$, $q^{\varepsilon}$.

\begin{assum}\label{assumption2}
Assumption~\ref{assumption2} coincides with Assumption~\ref{assumption1} except for point c) which is replaced by the following.
\begin{enumerate}[c)]
\item We assume that $H^{\varepsilon}\equiv 0$ and that
$h^{\varepsilon}(x)=0$ for almost every $x\in \mathbb{R}^N\backslash K_{\varepsilon/2}$. Moreover, there exists a positive constant $E_3$ such that
\begin{equation}\label{sourceassumptionbis}
\int_{K_{\varepsilon/2}\backslash K^{\varepsilon}}(k^2q_2^{\varepsilon})^{-1}|h^{\varepsilon}|^2 \leq E_3.
\end{equation}
Notice that the above condition means that $q_2^{\varepsilon}>0$ almost everywhere on the set $\{h^{\varepsilon}\neq 0\}$ and that the integral is actually performed on such a set.
\end{enumerate}
\end{assum}

\begin{oss}
Let us point out here the main differences between these two assumptions.
In Assumption~\ref{assumption2} we do not impose any condition on the coefficients and scatterers inside $K_{\varepsilon/2}$ except for the indirect condition in \eqref{sourceassumptionbis}. In particular, if there are no sources, then no condition at all is required for the configuration inside $K_{\varepsilon/2}$ except for its admissibility. The drawback is that the kind of sources we can allow in Assumption~\ref{assumption2} is more limited than those
considered in Assumption~\ref{assumption1}.

Let us finally notice that, only for Assumption~\ref{assumption1}, we may replace \eqref{secondcondsigmabis} with the following weaker condition
\begin{equation}\label{secondcondsigma}
\lim_{\varepsilon\to 0^+}\frac{1}{\varepsilon^2}\int_{K_{\varepsilon}\backslash K_{\varepsilon/2}}\sigma^{\varepsilon}\nabla \tilde{d}\cdot\nabla \tilde{d}=0.
\end{equation}
\end{oss}

We shall keep fixed throughout this section two functions $\phi_1,\ \phi_2\in C^{\infty}(\mathbb{R})$ such that they are nondecreasing and $\phi_1$ is identically equal to $0$ on a neighborhood of  $(-\infty,3/4]$ and identically equal to $1$ on a neighborhood of $[1,+\infty)$, whereas
$\phi_2$ is identically equal to $0$ on a neighborhood of  $(-\infty,1/2]$ and identically equal to $1$ on a neighborhood of $[3/4,+\infty)$. Also for any symmetric positive definite matrix $\sigma$, we define $\sqrt{\sigma}$ the symmetric positive definite matrix such that $\sqrt{\sigma}\cdot\sqrt{\sigma}=\sigma$.

Let us also fix $\varepsilon_n$, $0<\varepsilon_n\leq \varepsilon_0$,
$n\in\mathbb{N}$, a decreasing sequence of positive numbers such that $\lim_n\varepsilon_n=0$.
For simplicity
we denote
$$\begin{array}{c}K_n=K_{\varepsilon_n}\quad\text{and}\quad H_n=K_{\varepsilon_n/2}
\quad\text{and}\quad J_n=K_{3\varepsilon_n/4},\\
\{K_1^{\varepsilon_n},K_2^{\varepsilon_n},K_3^{\varepsilon_n},s^{\varepsilon_n},\sigma^{\varepsilon_n},q^{\varepsilon_n},h^{\varepsilon_n},H^{\varepsilon_n}\}=\{K_1^n,K_2^n,K_3^n,s^n,\sigma^n,q^n,h^n,H^n\},\\
K^n=K_1^n\cup K_2^n\cup K_3^n=K^{\varepsilon_n}.
\end{array}$$

We begin with the following result where a Mosco-type convergence is proved.

\begin{lem}\label{Moscotypelemma}
Under the previous assumptions, the following two properties hold.
\begin{enumerate}[i\textnormal{)}]
\item \label{Mosco1}
Let  us consider a subsequence $\varepsilon_{n_l}$ and a sequence $u_l\in H^1(B_{R+1}\backslash H_{n_l})$, $l\in\mathbb{N}$.
Let us consider $(u_l,\sqrt{\sigma^{n_l}}\nabla u_l)\in L^2(B_{R+1},\mathbb{R}^{N+1})$ by extending $u_l$ and $\sqrt{\sigma^{n_l}}\nabla u_l$ to $0$ in $H_{n_l}$.
If  $(u_l,\sqrt{\sigma^{n_l}}\nabla u_l(1-\chi_{J_{n_l}}))$ converges weakly to $(u,V)$ in $L^2(B_{R+1},\mathbb{R}^{N+1})$ as $l\to\infty$, then $u\in H^1(B_{R+1}\backslash K_0)$ and $V=\nabla u$  in $B_{R+1}\backslash K_0$.
\item \label{Mosco2}
For any $\varphi\in H^1(B_{R+1}\backslash K_0)$ such that $\varphi$ is bounded, there exists a sequence $\varphi_n\in H^1(B_{R+1})$, $n\in\mathbb{N}$, such that
$\varphi_n=0$ in $J_n$ and
$(\varphi_n,\sqrt{\sigma^n}\nabla \varphi_n)$ converges strongly to $(\varphi,\nabla\varphi)$ in $L^2(B_{R+1},\mathbb{R}^{N+1})$ as $n\to\infty$,
where $\varphi$ and $\nabla\varphi$ are extended to $0$ in $K_0$.
\end{enumerate}
\end{lem}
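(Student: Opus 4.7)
The plan splits along the two parts. Part~(i) is a distributional argument exploiting that $\sigma^n\equiv I$ on $\mathbb{R}^N\setminus K_n$ and that $K_n$ collapses to $K_0$ as $n\to\infty$. Part~(ii) requires constructing an explicit recovery sequence via a cutoff based on the distance function $\tilde d$, and the key hypothesis that makes this work is the size condition \eqref{secondcondsigmabis} on $\sigma^{\varepsilon}\nabla\tilde d\cdot\nabla\tilde d$ in the lossy layer.

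For part~(i), I would fix an arbitrary test field $\psi\in C^{\infty}_c(B_{R+1}\setminus K_0;\mathbb{R}^N)$. Since $\tilde d$ is continuous and strictly positive off $K_0$ while $\varepsilon_{n_l}\to 0$, the compactness of $\mathrm{supp}(\psi)$ forces $\mathrm{supp}(\psi)\subset B_{R+1}\setminus K_{n_l}$ for all sufficiently large $l$; on that set $\sigma^{n_l}=I$ and $\chi_{J_{n_l}}=0$, so
\[\int_{B_{R+1}}\sqrt{\sigma^{n_l}}\nabla u_l\,(1-\chi_{J_{n_l}})\cdot\psi\,dx=\int \nabla u_l\cdot\psi\,dx=-\int u_l\,\mathrm{div}\,\psi\,dx,\]
where the integration by parts is legitimate because $\mathrm{supp}(\psi)\subset B_{R+1}\setminus H_{n_l}$, where $u_l\in H^1$, and $\psi$ is compactly supported. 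Passing to the weak limits on the two ends yields $\int V\cdot\psi=-\int u\,\mathrm{div}\,\psi$, so $V=\nabla u$ in the distributional sense on $B_{R+1}\setminus K_0$; since $V\in L^2$, this delivers $u\in H^1(B_{R+1}\setminus K_0)$ with $V=\nabla u$ there.

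For part~(ii), I would set
\[\eta_n(x):=\phi_1\!\left(\frac{\tilde d(x)}{\varepsilon_n}\right),\]
which is Lipschitz, vanishes on $J_n=\{\tilde d\le 3\varepsilon_n/4\}$, and equals $1$ on $\mathbb{R}^N\setminus K_n$. After extending $\varphi$ by zero on $K_0$ (still denoted $\varphi$), I put $\varphi_n:=\eta_n\varphi\in H^1(B_{R+1})$, which indeed vanishes on $J_n$ as required. Dominated convergence, with $|\varphi_n|\le\|\varphi\|_{\infty}\chi_{B_{R+1}}$ and $\eta_n\to 1$ a.e.\ on $B_{R+1}\setminus K_0$, gives $\varphi_n\to\varphi$ in $L^2$. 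Writing
\[\sqrt{\sigma^n}\nabla\varphi_n-\nabla\varphi=\bigl(\eta_n\sqrt{\sigma^n}\nabla\varphi-\nabla\varphi\bigr)+\varphi\,\sqrt{\sigma^n}\nabla\eta_n,\]
the first bracket vanishes on $B_{R+1}\setminus K_n$ (where $\eta_n=1$ and $\sigma^n=I$) and on $K_n$ is bounded pointwise by $(1+\sqrt\Lambda)|\nabla\varphi|$; since $K_n\downarrow K_0$ in measure and $\nabla\varphi=0$ a.e.\ on $K_0$, this bracket tends to zero in $L^2$.

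The quantitative heart of the argument, and the step I expect to be the main obstacle, is the control of the cutoff-gradient term $\varphi\sqrt{\sigma^n}\nabla\eta_n$, supported in $K_n\setminus J_n\subset K_n\setminus H_n$. From $\nabla\eta_n=\varepsilon_n^{-1}\phi_1'(\tilde d/\varepsilon_n)\nabla\tilde d$ one obtains
\[\int_{B_{R+1}}|\varphi|^2\,\sigma^n\nabla\eta_n\cdot\nabla\eta_n\,dx\le\|\varphi\|_{\infty}^2\|\phi_1'\|_{\infty}^2\int_{K_n\setminus H_n}\frac{\sigma^n\nabla\tilde d\cdot\nabla\tilde d}{\varepsilon_n^2}\,dx,\]
and the hypothesis \eqref{secondcondsigmabis} bounds the integrand by $E_2$ while $|K_n\setminus H_n|\to 0$; under the weaker hypothesis \eqref{secondcondsigma} the whole integral vanishes directly. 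This is precisely where the assumption tailored to the lossy layer enters, and it closes the proof of the strong convergence.
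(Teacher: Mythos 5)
Your proof is correct and follows essentially the same approach as the paper's: part (i) is the standard locality argument (the paper restricts to open sets $D$ with $\overline{D}\cap K_0=\emptyset$, you test against compactly supported vector fields, which is equivalent), and part (ii) uses exactly the paper's cutoff $\varphi_n=\phi_1(\tilde d/\varepsilon_n)\varphi$, with your algebraic split of $\sqrt{\sigma^n}\nabla\varphi_n-\nabla\varphi$ being a minor cosmetic variant of the paper's estimate, relying in the same way on \eqref{firstcondsigma} for the term proportional to $\nabla\varphi$ and on \eqref{secondcondsigmabis} (or the weaker \eqref{secondcondsigma}) for the cutoff-gradient term supported in $K_n\setminus H_n$.
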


\proof We begin by proving $\ref{Mosco1})$. Let $D$ be any open subset of $B_{R+1}$ such that
$\overline{D}\cap K_0=\emptyset$. There exists $\overline{l}\in\mathbb{N}$ such that for any $l\geq \overline{l}$ we have that, in $D$,  $(u_l,\sqrt{\sigma^{n_l}}\nabla u_l(1-\chi_{J_{n_l}}))=(u_l,\nabla u_l)$. Therefore $V=\nabla u$ in $D$ and we may easily conclude that $u\in H^1(B_{R+1}\backslash K_0)$ and $V=\nabla u$ in $B_{R+1}\backslash K_0$. Moreover, $v$ and $V$ are $0$ almost everywhere in $K_0$.

For what concerns $\ref{Mosco2})$, we define $\chi_n=\phi_1(\tilde{d}/\varepsilon_n)$ and $\varphi_n=\chi_n\varphi$. We immediately obtain that $\varphi_n$ is identically $0$ in $J_n$ and that
$\varphi_n=\varphi$ in $B_{R+1}\backslash K_n$. Clearly $\varphi_n\in H^1(B_{R+1})$ and, as $n\to\infty$, $\varphi_n$ converges to $\varphi$ in $L^2(B_{R+1})$.
Then we need to estimate
$$\int_{B_{R+1}}\|\sqrt{\sigma^n}\nabla\varphi_n-\nabla\varphi\|^2\leq
\int_{K_n\backslash K_0}\|\nabla\varphi\|^2+\int_{K_n\backslash H_n}\|\sqrt{\sigma^n}\nabla\varphi_n\|^2.
$$
The first term of the right-hand side clearly goes to $0$ as $n\to\infty$. About the second term, we may notice that
$$\|\sqrt{\sigma^n}\nabla\varphi_n\|^2\leq 2\left(
\|(\sqrt{\sigma^n}\nabla\varphi)\chi_n\|^2+\|(\sqrt{\sigma^n}\nabla\chi_n)\varphi\|^2\right)
$$
and that
$$\nabla\chi_n=\phi'_1(\tilde{d}/\varepsilon_n)
\frac{\nabla \tilde{d}}{\varepsilon_n}.$$
The conclusion immediately follows by \eqref{firstcondsigma} and \eqref{secondcondsigma}.\cvd

\bigskip

Let $u^i$ be an entire solution to the Helmholtz equation $\Delta u+k^2u=0$ in $\mathbb{R}^N$.
We consider the scattering problem \eqref{scatpbm} with the configuration
$\{K_1,K_2,K_3,s,\sigma,q,h,H\}$
replaced by
$\{K_1^n,K_2^n,K_3^n,s^n,\sigma^n,q^n,h^n,H^n\}$ and we denote by $u_n$ its solution.


We have the following theorem.

\begin{thm}\label{scatteo}
Under the previous assumptions, $v_n=u_n(1-\chi_{H_n})$ converges to a function $u$ strongly in $L^2(B_r)$ for any $r>0$, with $u$ solving
\begin{equation}\label{uscateq}
\left\{\begin{array}{ll}
\Delta u+k^2u=0 &\text{in }\ \mathbb{R}^N\backslash K_0,\\
u=u^i+u^s &\text{in }\ \mathbb{R}^N\backslash K_0,\\
\nabla u\cdot \nu=0 &\text{on }\ \partial K_0,\\
\lim_{r\to+\infty}r^{(N-1)/2}\left(\frac{\partial u^s}{\partial r}-i ku^s\right)=0 &r=\|x\|,
\end{array}\right.
\end{equation}
and $u=0$ in $K_0$.
\end{thm}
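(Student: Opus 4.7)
\proof[Proof plan]

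The plan is to argue by compactness: establish uniform energy estimates on the sequence $u_n$, extract a weak limit, identify it as the unique solution of the limit sound-hard scattering problem via the Mosco-type convergence of Lemma~\ref{Moscotypelemma}, and finally upgrade weak to strong convergence.

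\textbf{Step 1 (Uniform bounds).} First, I would test the weak formulation of \eqref{scatpbm} (for the configuration indexed by $n$) with $\overline{u_n}\eta^2$, where $\eta$ is a smooth cutoff supported in a fixed large ball containing $B_R$ and equal to $1$ on $B_{R+1}$. Taking real and imaginary parts, the imaginary part yields a positivity identity involving $k^2\int q_2^n |u_n|^2$, the impedance term on $\partial K_3^n$, and the source terms $h^n$, $H^n$. The key point is that in the lossy layer $K_n\setminus H_n$ the assumption $E'_1(\omega_1(\varepsilon_n))^{-1}\leq q_2^n$ in \eqref{eq:ll1} forces a strong damping, while \eqref{firstcondsigma} controls $\sigma^n$ from above. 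Combined with the source bounds \eqref{sourceassumption} or \eqref{sourceassumptionbis} and a standard Rellich-type argument near infinity using the Sommerfeld condition for $u_n^s=u_n-u^i$, one extracts that $\|u_n\|_{H^1(B_{R+1}\setminus H_n)}$ is bounded uniformly in $n$, together with the crucial estimate $\int_{K_n\setminus H_n}q_2^n|u_n|^2\leq C$ which, because $q_2^n\to\infty$, implies $\|u_n\|_{L^2(K_n\setminus H_n)}\to 0$.

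\textbf{Step 2 (Weak limit and Mosco identification).} Set $v_n=u_n(1-\chi_{H_n})$, extended by $0$ inside $H_n$, and set $V_n=\sqrt{\sigma^n}\nabla u_n(1-\chi_{J_n})$, also extended by $0$. By Step~1 both sequences are bounded in $L^2(B_{R+1})$, so along a subsequence they converge weakly to some $(u,V)$. By Lemma~\ref{Moscotypelemma}\,\ref{Mosco1}), we have $u\in H^1(B_{R+1}\setminus K_0)$ with $V=\nabla u$ outside $K_0$ and $u\equiv 0$ on $K_0$. Since the immersion $W^{1,2}(B_{R+1}\setminus K_0)\hookrightarrow L^2(B_{R+1}\setminus K_0)$ is compact (because $K_0$ is regular), the convergence of $v_n$ in $L^2(B_r)$ improves to strong convergence.

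\textbf{Step 3 (Limit equation and Neumann condition).} To identify the PDE satisfied by $u$ outside $K_0$, I would test the equation for $u_n$ against an arbitrary $\varphi\in H^1(B_{R+1}\setminus K_0)\cap L^\infty$ with compact support, using the recovery sequence $\varphi_n$ provided by Lemma~\ref{Moscotypelemma}\,\ref{Mosco2}). Because $\varphi_n$ vanishes on $J_n$, the integrals over $K^n$ together with the boundary terms on $\partial K_1^n,\partial K_2^n,\partial K_3^n$ disappear entirely; the integral over the lossy annulus $K_n\setminus J_n$ is controlled by the uniform bounds together with the strong convergence $\varphi_n\to\varphi$ and the fact that the measure of this annulus times $|q^n|$ vanishes by \eqref{secondassumption}. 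Passing to the limit in the remaining terms, which are supported in $\mathbb{R}^N\setminus K_n$ where $\sigma^n\equiv I$ and $q^n\equiv 1$, yields exactly the weak formulation
\[
\int_{\mathbb{R}^N\setminus K_0}\nabla u\cdot\nabla\overline{\varphi}-k^2u\overline{\varphi}=0
\]
for every admissible $\varphi$. This is the weak statement of $\Delta u+k^2u=0$ in $\mathbb{R}^N\setminus K_0$ together with the homogeneous Neumann condition $\nabla u\cdot\nu=0$ on $\partial K_0$.

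\textbf{Step 4 (Radiation condition and uniqueness).} Outside the fixed ball $B_R$ the coefficients are trivial and $u_n=u^i+u_n^s$ with $u_n^s$ satisfying the Sommerfeld radiation condition. A standard argument, writing $u_n^s$ as a combination of spherical Hankel-type outgoing modes with coefficients that are uniformly controlled by the far-field energy bound from Step~1, shows that the limit scattered field $u^s=u-u^i$ also satisfies the radiation condition. Since \eqref{uscateq} has at most one solution (by the well-posedness of the sound-hard exterior scattering problem on the regular scatterer $K_0$), the limit is uniquely determined; consequently the whole sequence $v_n$ converges, not merely a subsequence.

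The main obstacle, as I see it, is the passage to the limit in the term on the thin lossy annulus $K_n\setminus H_n$ in Step~3: one must balance the smallness of $|K_n\setminus H_n|$ and of $\|u_n\|_{L^2(K_n\setminus H_n)}$ against the largeness of $\|q^n\|_{L^\infty}\sim\omega_1(\varepsilon_n)^{-1}$, which is exactly what the integral condition \eqref{secondassumption} and the gradient condition \eqref{secondcondsigmabis} (or the weaker \eqref{secondcondsigma}) are designed to accomplish. The Mosco recovery sequence of Lemma~\ref{Moscotypelemma}\,\ref{Mosco2}) is the device that makes this balancing work cleanly and is the reason the limit picks up the Neumann (sound-hard) rather than a Dirichlet condition on $\partial K_0$.
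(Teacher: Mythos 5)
Your overall strategy — uniform energy estimates, extraction of a weak limit, identification of the Neumann condition via the Mosco lemma, and strong convergence by uniqueness — is the same as the paper's, and Steps~2--4 are a fair sketch of the corresponding portions. However, there are two genuine gaps.

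The most serious one is in Step~1: you assert that the uniform $H^1$ bound and the crucial $L^2$ estimates ``are extracted'' by testing with a cutoff together with a Rellich argument, but this chain cannot close on its own. The energy identity obtained by testing with $\overline{u_n}\eta^2$ produces on the right-hand side terms like $k^2\int q^n|u_n|^2$ and $\int_{\partial B_{R_1}}(\nabla u_n\cdot\nu)\overline{u_n}$, both of which need an a~priori control on $\|u_n\|_{L^2(B_{R+1}\setminus H_n)}$ before they can be absorbed. The paper handles this by first \emph{assuming} the $L^2$ bound $\|v_n\|_{L^2(B_{R+1})}\leq C$, developing the entire argument under that hypothesis, and then closing the loop with a separate contradiction step at the end: if $a_n=\|v_n\|_{L^2(B_{R+1})}\to\infty$, one normalizes $w_n=u_n/a_n$, re-runs the machinery to show that the weak limit $w$ solves the homogeneous sound-hard scattering problem and hence vanishes, contradicting $\|w\|_{L^2(B_{R+1})}=1$. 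This normalization/uniqueness step is indispensable and is absent from your proposal; without it the uniform bound in Step~1 is not justified.

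The second gap is in Step~2: the compactness of $W^{1,2}(B_{R+1}\setminus K_0)\hookrightarrow L^2$ does not directly upgrade $v_n\to u$ to strong $L^2$ convergence, because the functions $v_n$ live on the \emph{varying} domains $B_{R+1}\setminus H_n$ rather than on $B_{R+1}\setminus K_0$, so they are not a bounded sequence in the fixed space $W^{1,2}(B_{R+1}\setminus K_0)$. The paper instead uses the uniform $L^p$-Sobolev embedding constant on $B_{R_1}\setminus K_n$ (condition~\eqref{Sobolev}) together with a decomposition of $B_{R_1}$ into a thin collar $A_h$ near $K_0\cup\partial B_{R_1}$ and its complement $B_h$; the contribution on $A_h$ is made small via interpolation with the uniform $L^p$ bound and the smallness of $|A_h|$, and the contribution from $K_n\setminus H_n$ is made small via the $\omega_1(\varepsilon_n)$-damping estimate~\eqref{u_nbound2}. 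You need this (or an equivalent) quantitative argument; appealing to compact embedding in a fixed domain does not suffice.
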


\proof We develop the proof when Assumption~\ref{assumption1} holds true. At suitable points of the proof, we shall describe the needed modification if Assumption~\ref{assumption2} is used instead.

Let us begin by assuming the following further condition
\begin{equation}\label{boundedness}
\|v_n\|_{L^2(B_{R+1})}\leq C\quad\text{for any }n\in \mathbb{N}.
\end{equation}

Let us assume that, up to a subsequence, we have that $v_n$ converges weakly to $u$
in $L^2(B_{R+1})$ as $n\to\infty$. We immediately infer that $u=0$ in $K_0$.

We notice that $u_n$ satisfies the Helmholtz equation $\Delta u+k^2 u=0$ in $\mathbb{R}^N\backslash \overline{B_R}$.
By standard regularity estimates, we may infer that there exists a constant $C_1$, depending on the constant $C$ in \eqref{boundedness} and $R$, such that for $R_1=R+(1/2)$ we have
$$\|u_n\|_{C^0(\partial B_{R_1})}+\|\nabla u_n\|_{C^0(\partial B_{R_1})}\leq C_1\quad\text{for any }n\in\mathbb{N}.$$

By the weak formulation of \eqref{scatpbm}, we infer that
\begin{multline}
\int_{B_{R_1}\backslash K^n}\sigma^n\nabla u_n\cdot\nabla \overline{u_n}=\\
\int_{B_{R_1}\backslash K^n}k^2q^n |u_n|^2+\int_{\partial K_3^n}s^n|u_n|^2+\int_{H_n\backslash K^n}h\overline{u_n}+
\int_{H_n\backslash K^n}H\cdot\nabla \overline{u_n}
+\int_{\partial B_{R_1}}(\nabla u_n\cdot\nu) \overline{u_n},
\end{multline}
where we recall that $K^n=K_1^n\cup K_2^n\cup K_3^n$.

Clearly the exists a constant $C_2$, depending on $C_1$ and $R$ only, such that
$$\left|\int_{\partial B_{R_1}}(\nabla u_n\cdot\nu) \overline{u_n}\right|\leq C_2\quad\text{for any }n\in\mathbb{N}.$$

We need that
$\{(v_n,\sqrt{\sigma^n}\nabla v_n(1-\chi_{J_n}))\}_{n\in\mathbb{N}}$ is uniformly bounded in $L^2(B_{R_1},\mathbb{R}^{N+1})$, that is
\begin{equation}\label{boundedness2}
\|(v_n,\sqrt{\sigma^n}\nabla v_n(1-\chi_{J_n}))\|_{L^2(B_{R_1},\mathbb{R}^{N+1})}\leq C_3\quad\text{for any }n\in \mathbb{N}.
\end{equation}
Notice that we are assuming as usual that $\sqrt{\sigma^n}\nabla v_n$ is extended to zero in
$H_n$. If this is the case,
up to a subsequence, $(v_n,\sqrt{\sigma^n}\nabla v_n(1-\chi_{J_n}))$ converges weakly in $L^2(B_{R_1},\mathbb{R}^{N+1})$ to $(v,V)$ as $n\to\infty$. Clearly $v=u$, and,
by property $\ref{Mosco1}\textnormal{)}$ of Lemma~\ref{Moscotypelemma}, we have that $u\in H^1(B_{R_1}\backslash K_0)$ and $V=\nabla u$ in $B_{R_1}\backslash K_0$. Moreover, $u$ and $V$ are zero in $K_0$.

Let us prove such a uniform boundedness. Here the proof is slightly different depending on the Assumption used. We begin with Assumption~\ref{assumption1}.

Obviously $$0\leq \int_{B_{R_1}\backslash K_n}k^2q^n |u_n|^2=k^2
\int_{B_{R_1}\backslash K_n}|u_n|^2\leq k^2C^2.$$
We split into the real and imaginary parts and, by the properties of $q^n$ and $s^n$, we obtain that,
inside $K_n$, $\int_{H_n\backslash K^n}q^n_2 |u_n|^2$ and $\int_{K_n \backslash H_n}q^n_2 |u_n|^2$ are not negative. Furthermore,
\begin{multline*}
0\leq \int_{H_n\backslash K^n}\!\!\! k^2q^n_2 |u_n|^2+\int_{K_n \backslash H_n}\!\!\! k^2q^n_2 |u_n|^2\leq\\
\frac{E_3}{2}+\frac{1}{2}\int_{H_n\backslash K^n}\!\!\! k^2q^n_2|u_n|^2
+\frac{E_3}{2c}+\frac{c}{2}\int_{H_n\backslash K^n}\!\!\! \sigma^n\nabla u_n\cdot\nabla \overline{u_n}+
C_2,
\end{multline*}
for any positive constant $c$, which we shall fix later. Hence, we have that both integrals
$\int_{H_n\backslash K^n}k^2q^n_2 |u_n|^2$ and $\int_{K_n \backslash H_n}k^2q^n_2 |u_n|^2$ are bounded by the following quantity
$$A_n=E_3
+\frac{E_3}{c}+c\int_{H_n\backslash K^n}\!\!\! \sigma^n\nabla u_n\cdot\nabla \overline{u_n}+
2C_2.$$
We conclude that
\begin{equation}\label{u_nbound}
k^2\int_{K_n\backslash H_n} |u_n|^2\leq (A_n/E'_1)\omega_1(\varepsilon_n)
\quad\text{and}\quad
k^2\int_{H^n\backslash K^n} |u_n|^2\leq A_n\left(\inf_{H_n\backslash K^n} q^{\varepsilon}_2\right)^{-1}.
\end{equation}
Hence
$$\int_{K_n\backslash H_n}k^2q^n_1|u_n|^2\leq (E_1/E'_1)A_n
\quad\text{and}\quad
\int_{H_n\backslash K^n}k^2q^n_1 |u_n|^2\leq E_3A_n.$$
We deduce that
\begin{multline*}
\int_{B_{R_1}\backslash K^n} \sigma^n\nabla u_n\cdot\nabla \overline{u_n}\leq\\
 k^2C^2+
(E_1/E'_1)A_n+E_3A_n+\frac{E_3}{2}+\frac{1}{2}A_n
+\frac{E_3}{2c_1}+\frac{c_1}{2}\int_{H_n\backslash K^n}\!\!\! \sigma^n\nabla u_n\cdot\nabla \overline{u_n}+C_2
\end{multline*}
for any positive constant $c_1$, that we shall fix later.
Setting $\tilde{E}=(E_1/E'_1)+E_3+1/2$ and $\tilde{C}=k^2C^2+E_3/2+C_2$, we choose $c$ and $c_1$ such that $\tilde{E}c+c_1/2=1/2$ and we obtain that
$$\int_{B_{R_1}\backslash K^n}\sigma^n\nabla u_n\cdot\nabla \overline{u_n}\leq 2\tilde{C}+
2\tilde{E}\tilde{E}_1+E_3/c_1
$$
where $\tilde{E}_1=E_3+E_3/c+2C_2$.

We conclude that there exists a constant $\tilde{C}_1$ such that
$$\int_{B_{R_1}\backslash K^n}\sigma^n\nabla u_n\cdot\nabla \overline{u_n}\leq \tilde{C}_1\quad\text{for any }n\in\mathbb{N}$$
thus in particular
\begin{equation}\label{unifboundgrad}
\int_{B_{R_1}\backslash J_n}\sigma^n\nabla u_n\cdot\nabla \overline{u_n}\leq \tilde{C}_1\quad\text{for any }n\in\mathbb{N}
\end{equation}
and the required uniform boundedness property is achieved.

Moreover, we can also infer that
there exists a constant $A$ such that $A_n\leq A$ for any $n\in\mathbb{N}$ and that
\begin{equation}\label{u_nbound2}
k^2\int_{K_n\backslash H_n} |u_n|^2\leq (A/E'_1)\omega_1(\varepsilon_n)\quad\text{for any }n\in\mathbb{N}.
\end{equation}
Finally, there exists a constant $\tilde{C}_2$ such that
$$\int_{B_{R_1}\backslash K^n}k^2|q^n| |u_n|^2\leq
\tilde{C}_2\quad\text{for any }n\in\mathbb{N}$$
and in particular such that
\begin{equation}\label{u_nbound3}
\int_{B_{R_1}\backslash H_n}k^2|q^n| |u_n|^2\leq
\tilde{C}_2\quad\text{for any }n\in\mathbb{N}.
\end{equation}

For what concerns Assumption~\ref{assumption2}, the reasoning may be changed in the following way. We have that
$$0\leq \int_{H_n\backslash K^n}\!\!\! k^2q^n_2 |u_n|^2+\int_{K_n \backslash H_n}\!\!\! k^2q^n_2 |u_n|^2\leq
\frac{E_3}{2}+\frac{1}{2}\int_{H_n\backslash K^n}\!\!\! k^2q^n_2|u_n|^2+C_2.$$
Therefore, we obtain that
$$0\leq \int_{K_n \backslash H_n}\!\!\! k^2q^n_2 |u_n|^2\leq
\frac{E_3}{2}+C_2=A.$$
We easily conclude that \eqref{u_nbound2} and \eqref{u_nbound3} hold true also in this case.

Let us now define, for any $n\in\mathbb{N}$, $\chi_n=\phi_2(\tilde{d}/\varepsilon_n)$. We apply a Caccioppoli-type inequality. Namely, we have that
$$\int_{B_{R_1}\backslash H_n}\sigma^n\nabla u_n\cdot\nabla(\overline{u_n}\chi_n^2)=
\int_{B_{R_1}\backslash H_n}k^2q^n|u_n|^2\chi_n^2+\int_{\partial B_{R_1}}(\nabla u_n\cdot\nu) \overline{u_n}.$$
Therefore,
\begin{multline*}
\int_{B_{R_1}\backslash H_n}\chi_n^2\sigma^n\nabla u_n\cdot\nabla \overline{u_n}\leq
\tilde{C}_2+C_2+2\left|
\int_{B_{R_1}\backslash H_n}\overline{u_n}\chi_n\sigma^n\nabla u_n\cdot\nabla \chi_n
\right|\leq \\
\tilde{C}_2+C_2+\frac{1}{2}\int_{B_{R_1}\backslash H_n}\chi_n^2
\sigma^n\nabla u_n\cdot\nabla \overline{u_n}+2
\int_{B_{R_1}\backslash H_n}
|u_n|^2
\sigma^n\nabla \chi_n\cdot\nabla \chi_n,
\end{multline*}
that is
$$\int_{B_{R_1}\backslash H_n}\chi_n^2\sigma^n\nabla u_n\cdot\nabla \overline{u_n}\leq
2\tilde{C}_2+2C_2+\frac{4}{\varepsilon_n^2}
\int_{K_n\backslash H_n}
|u_n|^2
(\phi'_2(\tilde{d}/\varepsilon_n))^2\sigma^n
\nabla \tilde{d}\cdot\nabla \tilde{d}.
$$
By \eqref{secondcondsigmabis}, we conclude that \eqref{unifboundgrad} also holds true.

Let us fix $\varphi\in H^1(B_{R+1}\backslash K_0)$ whose support is contained in $B_{R_1}$
and such that $\varphi$ is bounded.
By property $\ref{Mosco2}\textnormal{)}$ of Lemma~\ref{Moscotypelemma},
we can construct for any $n\in\mathbb{N}$ a function
$\varphi_n\in H^1(B_{R+1})$, whose support is contained in $B_{R_1}$ and such that
$\varphi_n=0$ in $J_n$, with
$(\varphi_n,\sqrt{\sigma^n}\nabla \varphi_n)$ converging strongly to $(\varphi,\nabla\varphi)$ in $L^2(B_{R+1},\mathbb{R}^{N+1})$ as $n\to\infty$,
where $\varphi$ and $\nabla\varphi$ are extended to $0$ in $K_0$.

We have that
$$\int_{B_{R_1}}\sigma^n\nabla v_n\cdot\nabla \varphi_n-\int_{B_{R_1}}k^2q^n v_n\varphi_n=0.$$
It is easy to show that
$$\lim_{n\to\infty}\int_{B_{R_1}}\sigma^n\nabla v_n\cdot\nabla \varphi_n=\int_{B_{R+1}\backslash K_0}\nabla u\cdot\nabla \varphi.$$

We observe that
\begin{multline*}
\int_{B_{R_1}}\!\! k^2q^n v_n\varphi_n=\int_{B_{R+1}\backslash K_n}\!\! k^2v_n\varphi_n+\int_{K_n\backslash H_n}\!\! k^2q^n u_n\varphi_n=\\
\int_{B_{R+1}}\!\! k^2v_n\varphi\chi_{B_{R+1}\backslash K_n}+
\int_{K_n\backslash H_n}\!\! k^2q^n u_n\varphi_n.
\end{multline*}
Obviously
$$\lim_{n\to\infty}\int_{B_{R+1}}\!\! v_n\varphi\chi_{B_{R+1}\backslash K_n}=
\int_{B_{R+1}\backslash K_0}u\varphi.$$
Finally, since $\varphi_n$ is uniformly bounded and
$\int_{K_n\backslash H_n}k^2|q^n| |u_n|^2$ is uniformly bounded,
 and by \eqref{secondassumption}, we are able to pass to the limit and prove that
$$\int_{B_{R+1}\backslash K_0}\nabla u\cdot\nabla \varphi-k^2 u\varphi=0.
$$
By density, we have this last equation holds for
any $\varphi\in H^1(B_{R+1}\backslash K_0)$ whose support is compactly contained in $B_{R_1}$.

By Lemma~3.1 in \cite{Ron03}, we have that, up to a subsequence,
$u_n$ converges to a function $u$ uniformly on compact subsets of $\mathbb{R}^N\backslash \overline{B_R}$,
with $u$ solving
$$
\left\{\begin{array}{ll}
\Delta u+k^2u=0 &\text{in }\mathbb{R}^N\backslash \overline{B_R}\\
u=u^i+u^s &\text{in }\mathbb{R}^N\backslash \overline{B_R}\\
\lim_{r\to+\infty}r^{(N-1)/2}\left(\frac{\partial u^s}{\partial r}-i ku^s\right)=0 &r=\|x\|.
\end{array}\right.
$$

Then we immediately conclude that $u$ is the unique solution to \eqref{uscateq}. We now show that
the whole sequence $v_n$ converges to $u$ strongly in $L^2(B_r)$ for any $r>0$.
By uniqueness of the solution to \eqref{uscateq}, we have that the whole sequence $v_n$ converges to $u$ weakly in $L^2(B_{R+1/2})$ and uniformly
on compact subsets of $\mathbb{R}^N\backslash \overline{B_R}$. Moreover,
by standard regularity estimates, we may also assume that $v_n$ converges to $u$ strongly in $L^2$ on any compact subset of $B_{R+1}\backslash K_0$.

We recall that there exist constants $p>2$ and $C_4>0$ such that for any $n\in\mathbb{N}$ we have
\begin{equation}\label{Sobolev}
\|u\|_{L^p(B_{R_1}\backslash K_n)}\leq C_4\|u\|_{H^1(B_{R_1}\backslash K_n)}\quad\text{for any }u\in H^1(B_{R_1}\backslash K_n).
\end{equation}

Let us fix a positive constant $h_0$, $0<h_0\leq \min\{\varepsilon_0,1/4\}$.
For any $h$, $0<h\leq h_0$, let $A_h=K_h\cup (B_{R_1}\backslash \overline{B_{R_1-h}})$ and $B_h=B_{R_1}\backslash A_h$. We notice that $B_h$ is compactly contained in $B_{R_1}\backslash K_0$
and that $\lim_{h\to 0^+}|A_h\backslash K_0|=0$.

Therefore, for any $h$, $0<h\leq h_0$, there exists $\overline{n}\in\mathbb{N}$ such that for any $n\geq \overline{n}$ we have $\varepsilon_n\leq h$ and
$$\|v_n-u\|_{L^2(B_{R_1})}\leq \|u\|_{L^2(A_h\backslash K_0)}+\|u_n\|_{L^2(K_n\backslash H_n)}+\|u_n\|_{L^2(A_h\backslash K_n)}+
  \|u_n-u\|_{L^2(B_h)}.$$
By \eqref{Sobolev} and \eqref{boundedness2}, we infer that
$$\|v_n-u\|_{L^2(B_{R_1})}\leq
 \|u\|_{L^2(A_h\backslash K_0)}+\|u_n\|_{L^2(K_n\backslash H_n)}+C_3C_4|A_h\backslash K_n|^{(p-2)/(2p)}+
  \|u_n-u\|_{L^2(B_h)}.
$$
By \eqref{u_nbound2}, we conclude that for any $n\geq \overline{n}$
$$\|v_n-u\|_{L^2(A_h)}\leq
 \|u\|_{L^2(A_h\backslash K_0)}+
\sqrt{(A/(k^2E'_1))\omega_1(\varepsilon_n)}
+C_3C_4|A_h\backslash K_0|^{(p-2)/(2p)}.
$$

We fix $\delta>0$. There exist $\overline{h}$, $0<\overline{h}\leq h_0$, and $\overline{n}\in\mathbb{N}$ such that
for any $n\geq \overline{n}$ we have $\varepsilon_n\leq \overline{h}$ and
$$\|v_n-u\|_{L^2(A_{\overline{h}})}\leq \delta/2.
$$
There exists $\tilde{n}\geq \overline{n}$ such that for any $n\geq \tilde{n}$ we have
$$\|u_n-u\|_{L^2(B_{\overline{h}})}\leq \delta/2,$$
therefore we have proved that $v_n$ converges to $u$ strongly in $L^2(B_{R_1})$ and consequently strongly in $L^2(B_r)$ for any $r>0$.

In order to conclude the proof, we have to show that \eqref{boundedness} holds true.
Let $a_n=\|v_n\|_{L^2(B_{R+1})}$.
By contradiction, let us assume that $\lim_{n\to\infty} a_n=+\infty$, possibly by passing to a subsequence.
Let us consider $w_n=u_n/a_n$. We have that
$$\|w_n(1-\chi_{H_n})\|_{L^2(B_{R+1})}=1.$$

Therefore $w_n(1-\chi_{H_n})$ converges to a function $w$ strongly in $L^2$ on any compact subset of $\mathbb{R}^N$.
The function $w$ satisfies
\begin{equation}
\left\{\begin{array}{ll}
\Delta w+k^2w=0 &\text{in }\mathbb{R}^N\backslash K_0\\
\nabla w\cdot \nu=0 &\text{on }\partial K_0.
\end{array}\right.
\end{equation}
Clearly we also have that $\|w\|_{L^2(B_{R+1})}=1$.

We have that $\|u^s_n/a_n\|_{L^2(B_{R+1}\backslash \overline{B_R})}$, $n\in\mathbb{N}$, is uniformly bounded. Therefore, again up to a subsequence, $u^s_n/a_n$ converges, as $n\to\infty$, to a function $\tilde{w}$ strongly in $L^2$ on any compact subset of $\mathbb{R}^N\backslash \overline{B_R}$.
Such a function $\tilde{w}$ satisfies
\begin{equation}
\left\{\begin{array}{ll}
\Delta \tilde{w}+k^2\tilde{w}=0 &\text{in }\mathbb{R}^N\backslash \overline{B_R}\\
\lim_{r\to+\infty}r^{(N-1)/2}\left(\frac{\partial \tilde{w}}{\partial r}-i k\tilde{w}\right)=0 &r=\|x\|.
\end{array}\right.
\end{equation}

Since $w_n=u^i/a_n+u^s_n/a_n$, we may immediately conclude that, outside $\overline{B_R}$, we have $w=\tilde{w}$.
That is $w$ solves
\begin{equation}
\left\{\begin{array}{ll}
\Delta w+k^2w=0 &\text{in }\mathbb{R}^N\backslash K_0\\
\nabla w\cdot \nu=0 &\text{on }\partial K_0\\
\lim_{r\to+\infty}r^{(N-1)/2}\left(\frac{\partial w}{\partial r}-i kw\right)=0 &r=\|x\|.
\end{array}\right.
\end{equation}
By uniqueness, and since $\mathbb{R}^N\backslash K_0$ is connected, we may conclude that
$w$ is identically zero, which leads to a contradiction since
$\|w\|_{L^2(B_{R+1})}$ should be equal to $1$.\cvd

\bigskip

We summarize the results of this section in the following theorem.

\begin{thm}\label{thm:vmain1}
Under the previous assumptions, for any $\varepsilon$, $0<\varepsilon\leq\varepsilon_0$, let us fix ${\sigma}_l^\varepsilon={\sigma}_l^\varepsilon(x)$,
$x\in K_{\varepsilon}\backslash K_{\varepsilon/2}$,
an $N\times N$ symmetric matrix whose entries are real-valued measurable functions such that, for some $\lambda_{\varepsilon}$, $0<\lambda_{\varepsilon}\leq 1$, we have
$$\lambda_{\varepsilon}\|\xi\|^2\leq\hat\sigma^\varepsilon(x)\xi\cdot\xi\quad\text{for any }\xi\in\mathbb{R}^N\text{ and for a.e. }x\in K_{\varepsilon}\backslash K_{\varepsilon/2}.$$
Let ${q}_l^{\varepsilon}=\hat{q}_1^{\varepsilon}+i \hat{q}_2^{\varepsilon}={q}_l^{\varepsilon}(x)$, $x\in K_{\varepsilon}\backslash K_{\varepsilon/2}$, be a
complex-valued bounded measurable function, with real and imaginary parts $\hat{q}_1^{\varepsilon}$ and $\hat{q}_2^{\varepsilon}$ respectively, such that, for some $\lambda_{\varepsilon}$, $0<\lambda_{\varepsilon}\leq 1$, we have
$$\hat{q}_1^{\varepsilon}(x)\geq \lambda_{\varepsilon}\quad\text{and}\quad
\hat{q}_2^{\varepsilon}(x)\geq 0\qquad
\text{for a.e. }x\in K_{\varepsilon}\backslash K_{\varepsilon/2}.$$

We further require that ${\sigma}_l^\varepsilon$ and ${q}_l^{\varepsilon}$ satisfy Assumption~\textnormal{\ref{assumption1}b)}, for a given function $\omega_1$ and
given constants $E_1$, $E'_1$, $\Lambda$, and $E_2$.

Then let $\{K_1^{\varepsilon},K_2^{\varepsilon},K_3^{\varepsilon},s^{\varepsilon},\sigma^{\varepsilon},q^{\varepsilon},h^{\varepsilon},H^{\varepsilon}\}$ be any admissible configuration such that
$$\sigma^{\varepsilon}={\sigma}_l^{\varepsilon}\text{ and }q^{\varepsilon}={q}_l^{\varepsilon}\quad\text{  in }K_{\varepsilon}\backslash K_{\varepsilon/2}
$$
and satisfies, for a given constant $E_3$, either Assumption~\textnormal{\ref{assumption1}} or Assumption~\textnormal{\ref{assumption2}}. Notice that if Assumption~\textnormal{\ref{assumption1}} is used, we may replace \eqref{secondcondsigmabis} with \eqref{secondcondsigma}.

For any $d\in \mathbb{S}^{N-1}$, let $u^i(x)=e^{i kx\cdot d}$, $x\in\mathbb{R}^N$.
Let $u_{\varepsilon}$ be the solution to \eqref{scatpbm} with the configuration
$\{K_1,K_2,K_3,s,\sigma,q,h,H\}$.
replaced by
$\{K_1^{\varepsilon},K_2^{\varepsilon},K_3^{\varepsilon},s^{\varepsilon},\sigma^{\varepsilon},q^{\varepsilon},h^{\varepsilon},H^{\varepsilon}\}$, and $u$ be the solution to \eqref{uscateq}. Let $u_{\infty}^{\varepsilon}$ and $u_{\infty}$ be the scattering amplitudes of the corresponding scattered waves, respectively.

Then there exists a function $\omega:(0,\varepsilon_0]\to (0,+\infty]$ such that $\lim_{s\to 0^+}\omega(s)=0 $, depending on $K_0$, $\tilde{d}$, $R$,
${\sigma}_l^{\varepsilon}$, ${q}_l^{\varepsilon}$, and $E_3$ only, and a constant $C$ depending on $R$ only, such that for any $\varepsilon$, $0<\varepsilon\leq\varepsilon_0$, we have
\begin{equation}
\|u^{\varepsilon}-u\|_{L^2(B_{R+1}\backslash \overline{B_R})}\leq \omega(\varepsilon)
\end{equation}
and
\begin{equation}
\|u^{\varepsilon}_{\infty}-u_{\infty}\|_{L^{\infty}(\mathbb{S}^{N-1})}\leq C\omega(\varepsilon).
\end{equation}
\end{thm}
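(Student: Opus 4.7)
The statement is essentially a compactness/uniformity upgrade of Theorem~\ref{scatteo}, so my plan is to argue by contradiction and then extract the far-field estimate from the near-field estimate by standard radiating-solution theory.

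\emph{Step 1: reduction by contradiction.} Suppose the first inequality fails. Then there exist $\delta_0>0$, a sequence $\varepsilon_n\to 0^+$ and admissible configurations $\mathcal{C}^n=\{K_1^n,K_2^n,K_3^n,s^n,\sigma^n,q^n,h^n,H^n\}$ satisfying the hypotheses with the \emph{same} constants $E_1,E_1',\Lambda,E_2,E_3$ and the same modulus $\omega_1$, and a common $u^i=e^{ikx\cdot d_n}$ (we may pass to a subsequence to assume $d_n\to d\in\mathbb S^{N-1}$, so that $u^i_n\to e^{ikx\cdot d}$ locally uniformly), with corresponding solutions $u_n$ satisfying
\begin{equation*}
\|u_n-u\|_{L^2(B_{R+1}\setminus\overline{B_R})}\geq \delta_0,
\end{equation*}
where $u$ is the sound-hard scattered field from $K_0$ with incident wave $e^{ikx\cdot d}$. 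Since $H_n=K_{\varepsilon_n/2}\subset\overline{B_R}$ for $n$ large, we have $u_n=v_n:=u_n(1-\chi_{H_n})$ on $B_{R+1}\setminus\overline{B_R}$.

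\emph{Step 2: apply Theorem~\ref{scatteo}.} Under either Assumption~\ref{assumption1} or \ref{assumption2} (with the constants uniform in $n$), Theorem~\ref{scatteo} gives that $v_n\to u$ strongly in $L^2(B_r)$ for every $r>0$, where $u$ solves \eqref{uscateq}. Uniqueness for the sound-hard scattering problem identifies this limit independently of the subsequence. Taking $r=R+1$ and restricting to the exterior annulus contradicts the lower bound of Step~1. This proves the existence of a modulus $\omega(\varepsilon)\to 0$ depending only on the listed data $K_0$, $\tilde d$, $R$, $\sigma_l^\varepsilon$, $q_l^\varepsilon$ and $E_3$ (the dependence is only through the quantities $E_1,E_1',\Lambda,E_2,\omega_1$ which are determined by $\sigma_l^\varepsilon,q_l^\varepsilon$, and through $E_3$). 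The careful point is to make sure that along the contradicting sequence all the constants entering Theorem~\ref{scatteo} are uniform; this is exactly what the hypothesis of Theorem~\ref{thm:vmain1} ensures.

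\emph{Step 3: from near-field $L^2$ to far-field $L^\infty$.} The scattered field $u_n^s=u_n-u^i_n$ is a radiating solution to $\Delta w+k^2w=0$ in $\mathbb R^N\setminus\overline{B_R}$, and likewise $u^s=u-u^i$. Their difference is radiating and harmonic-Helmholtz in $\mathbb R^N\setminus\overline{B_R}$; by interior elliptic regularity (or by Green's representation on $\partial B_{R+1/2}$) combined with the asymptotic expansion \eqref{eq:asymptotic}, the far-field pattern depends continuously on the $L^2$-norm of the field on any exterior annulus, so
\begin{equation*}
\|u_\infty^\varepsilon-u_\infty\|_{L^\infty(\mathbb S^{N-1})}\leq C\|u^\varepsilon-u\|_{L^2(B_{R+1}\setminus\overline{B_R})}
\end{equation*}
with $C$ depending only on $R$ and $k$. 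Combining with Step~2 yields the second inequality.

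\emph{Main obstacle.} The delicate part is Step~2: one must verify that the proof of Theorem~\ref{scatteo} is genuinely \emph{quantitative} in the sense that all the a priori estimates (the Caccioppoli estimate, the bounds \eqref{u_nbound2}, \eqref{u_nbound3}, \eqref{unifboundgrad}, and the $L^p$ Sobolev constant in \eqref{Sobolev}) depend only on the data listed in the theorem, not on the specific configuration. Once this is checked, the contradiction argument goes through verbatim; everything else is soft.
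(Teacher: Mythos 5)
Your overall approach---extracting a uniform modulus from Theorem~\ref{scatteo} by a compactness/contradiction argument, then converting the near-field estimate into a far-field one via the Kirchhoff--Helmholtz representation---is essentially the same as the paper's (implicit) argument: the paper offers no separate proof of Theorem~\ref{thm:vmain1}, stating only that it ``summarizes the results of this section,'' so Theorem~\ref{scatteo} plus compactness is the intended route, and your ``main obstacle'' (that all constants in the proof of Theorem~\ref{scatteo} are controlled by the data $E_1,E_1',\Lambda,E_2,\omega_1,E_3,R,p,C_1$ listed in the hypotheses, and not by the specific admissible configuration) is exactly the right thing to verify.

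There are, however, two small but genuine slips in Step~1 that need fixing. First, when you allow $d_n$ to vary, the negation of the claim has to compare $u_n$ (the solution for $\mathcal{C}^n$ with incident direction $d_n$) to $u(d_n)$ (the sound-hard solution to \eqref{uscateq} with incident wave $e^{ikx\cdot d_n}$), not to $u(d)$ with the limiting direction; i.e.\ the bad sequence satisfies $\|u_n - u(d_n)\|_{L^2(B_{R+1}\setminus\overline{B_R})}\geq \delta_0$. After passing to a subsequence with $d_n\to d$ you must then separately establish that $u(d_n)\to u(d)$ in $L^2$ on the annulus, which follows from continuity of the sound-hard scattering map with respect to the incident field (and is easy), before combining it with $u_n\to u(d)$ to reach the contradiction. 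Second, Theorem~\ref{scatteo} as stated assumes a single fixed entire incident wave $u^i$, while your contradicting sequence carries incident waves $u_n^i=e^{ikx\cdot d_n}$ that change with $n$; you should note that the proof of Theorem~\ref{scatteo} survives verbatim under the substitution $u^i\rightsquigarrow u_n^i$ provided $u_n^i\to u^i$ together with all derivatives uniformly on compacts (this only enters through the boundary terms on $\partial B_{R_1}$ and through Lemma~3.1 of \cite{Ron03}, both of which accommodate a converging sequence of incident fields). Neither issue threatens the argument, but both need to be stated. The rest---identifying the dependence of the modulus through $E_1,E_1',\Lambda,E_2,\omega_1,E_3$ rather than the individual configurations, noticing that either Assumption~\ref{assumption1} or Assumption~\ref{assumption2} can be fixed along the subsequence, and the standard near-to-far-field estimate in Step~3---is correct as written.
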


\section{Full and partial cloaks in the physical space}\label{sect:4}

In this section, we shall apply Theorem~\ref{thm:vmain1} to the constructions of full and partial invisibility cloaks in the physical space. We shall stick to the terminologies introduced in Section~\ref{sect:general construction} for the general construction of the cloaks. Specifically, we shall let $\{K_1^\varepsilon,K_2^\varepsilon, K_3^\varepsilon, s^\varepsilon, \sigma^\varepsilon,q^\varepsilon,  h^\varepsilon, H^\varepsilon\}$ and $\{\Sigma_1,\Sigma_2, \Sigma_3, s, \widetilde\sigma^\varepsilon, \widetilde q^\varepsilon, h, H\}$ be the scattering objects respectively in the virtual and physical spaces. Let $F_\varepsilon$ be the blow-up transformation from the virtual space to the physical space.

\subsection{Regularized full invisibility cloak}\label{subsect:full}

Suppose $D$ is the closure of a bounded convex domain in $\mathbb{R}^N$ that contains the origin.
Let $\Omega$ be the closure of a bounded Lipschitz domain in $\mathbb{R}^N$ such that $D\subset \stackrel{\circ}{\Omega}$ and $\Omega\backslash D$ is connected.

For any $\varepsilon>0$, let $D_\varepsilon=\{\varepsilon x; x\in D\}$. It is supposed that for any $0<\varepsilon<1$ there exists a (uniformly) bi-Lipschitz and orientation-preserving map,
\begin{equation}\label{eq:F1}
F_\varepsilon^{(1)}: \Omega\backslash D_\varepsilon\rightarrow\Omega\backslash D,\quad F_\varepsilon^{(1)}|_{\partial\Omega}=\mbox{Identity}.
\end{equation}
That is, $F_\varepsilon^{(1)}$ blows up $D_\varepsilon$ to $D$ within $\Omega$.
Let
\begin{equation}\label{eq:F2}
F_\varepsilon^{(2)}(x)=\frac{x}{\varepsilon},\quad x\in D_\varepsilon,
\end{equation}
and
\begin{equation}\label{eq:F}
F_\varepsilon=\begin{cases}
\mbox{Identity}\qquad & \mbox{on\quad $\mathbb{R}^N\backslash \Omega$},\\
F_\varepsilon^{(1)}\qquad & \mbox{on\quad $\Omega\backslash D_\varepsilon$},\\
F_\varepsilon^{(2)}\qquad & \mbox{on\quad $D_\varepsilon$}.
\end{cases}
\end{equation}
Formally, we set
\begin{equation}\label{eq:ppvv}
\Sigma=D_{1/2},\ \ K_\varepsilon=D_{\varepsilon},\ \ K_{\varepsilon/2}=D_{\varepsilon/2}.
\end{equation}
Furthermore, we set
\begin{equation}\label{eq:cl1}
\widetilde\sigma_c^\varepsilon=(F_\varepsilon^{(1)})_*I,\quad \widetilde q_c^\varepsilon=(F_\varepsilon^{(1)})_* 1\quad\mbox{in\ $\Omega\backslash D$},
\end{equation}
and
\begin{equation}\label{eq:ly1}
(D\backslash {D}_{1/2};\widetilde\sigma^\varepsilon_l, \widetilde q_l^\varepsilon)=(F_\varepsilon)_*(K_{\varepsilon}\backslash {K}_{\varepsilon/2}; \sigma_l^\varepsilon, q_l^\varepsilon),
\end{equation}
with
\begin{equation}\label{eq:ly2}
\sigma_l^\varepsilon=c_1(x)\varepsilon^{2},\quad q_l^\varepsilon=(c_2(x)+i c_3(x))\varepsilon^{-N+1}.
\end{equation}
In \eqref{eq:ly2}, $c_1(x)$ is a symmetric-matrix valued measurable function, and $c_2(x), c_3(x)$ are bounded real valued measurable function such that
\begin{equation}\label{eq:ll2}
\lambda_0\|\xi\|^2\leq c_1(x)\xi\cdot\xi\leq \Lambda_0\|\xi\|^2,\quad \lambda_0\leq c_2(x), c_3(x)\leq \Lambda_0\qquad\mbox{for a.e. $x\in K_\varepsilon\backslash K_{\varepsilon/2}$},
\end{equation}
where $\lambda_0$ and $\Lambda_0$ are two positive constants independent of $\varepsilon$. $(D\backslash D_{1/2}; \widetilde\sigma_l^\varepsilon, \widetilde q_l^\varepsilon)$ is the chosen lossy layer for our cloaking scheme. By Lemma~\ref{lem:trans acoustics}, it is straightforward to verify that
\begin{equation}\label{eq:ll3}
\widetilde\sigma_l^\varepsilon(x)=c_1(\varepsilon x)\varepsilon^N,\quad \widetilde q_l^\varepsilon(x)=(c_2(\varepsilon x)+ic_3(\varepsilon x))\varepsilon,\quad x\in D\backslash D_{1/2}.
\end{equation}
We would like to emphasize that \eqref{eq:ll3} is only a particular choice for illustration of our general results in Section~\ref{stabsec}, and there are more choices as long as $(K_\varepsilon\backslash K_{\varepsilon/2}; \sigma_l^\varepsilon, q_l^\varepsilon)$ is such chosen that Assumption 1, b) is satisfied.

Next, we first consider the cloaking of passive objects by assuming $-h+\mbox{div}(H)=0$ in the physical space. Let $(\Sigma_1,\Sigma_2,\Sigma_3, s)$ be an admissible obstacle located inside $D_{1/2}$, and $(D_{1/2}\backslash\bigcup_{l=1}^3\Sigma_l; \widetilde\sigma_a, \widetilde q_a)$ be an arbitrary regular medium. Let $(K_\varepsilon^1, K_\varepsilon^2, K_3^\varepsilon, s^\varepsilon)$ and $(K_{\varepsilon/2}; \sigma_a^\varepsilon, q_a^\varepsilon)$ be the corresponding virtual images in the virtual space. Also, we set $K^\varepsilon=\bigcup_{l=1}^3 K_l^\varepsilon$. Hence, in the physical space
\begin{equation}\label{eq:rc2}
(\mathbb{R}^N; \widetilde{\sigma}^\varepsilon, \widetilde{q}^\varepsilon)=\begin{cases}
I, 1\qquad &\ \mbox{in\quad $\mathbb{R}^N\backslash\Omega$},\\
\widetilde{\sigma}_c^\varepsilon, \widetilde{q}_c^\varepsilon\qquad & \ \mbox{in\quad $\Omega\backslash D$},\\
\widetilde\sigma_l^\varepsilon, \widetilde q_l^\varepsilon\qquad & \ \mbox{in\quad $D\backslash D_{1/2}$},\\
\widetilde\sigma_a, \widetilde q_a\qquad &\ \mbox{in\quad $D_{1/2}$},
\end{cases}
\end{equation}

\begin{prop}\label{prop:full1}
Let $\{\Sigma_1,\Sigma_2,\Sigma_3,s,\widetilde\sigma^\varepsilon, \widetilde q^\varepsilon\}$ and $\{K_\varepsilon^1, K_\varepsilon^2, K_\varepsilon^3, s^\varepsilon, \sigma^\varepsilon, q^\varepsilon\}$ be as described above, and $\widetilde u_\varepsilon$ and $u_\varepsilon$ be the corresponding scattering waves \textnormal{(}cf. \eqref{eq:ps1} and \eqref{eq:vs1}\textnormal{)}. We also let $\widetilde u_\infty^\varepsilon$ and $u_\infty^\varepsilon$ be the scattering amplitudes of, respectively, $\widetilde u_\varepsilon$ and $u_\varepsilon$.

Then there exist $\varepsilon_0>0$ and a function $\omega:(0,\varepsilon_0]\to (0,+\infty]$ with $\lim_{s\to 0^+}\omega(s)=0 $, which is independent of $\widetilde\sigma_a, \widetilde q_a$ and $\Sigma_1,\Sigma_2,\Sigma_3$, $s$, and of the impinging direction $d$, such that for any $\varepsilon<\varepsilon_0$
\begin{equation}\label{eq:full1}
\|\widetilde u_\infty^\varepsilon\|_{L^\infty(\mathbb{S}^{N-1})}=\|u_\infty^\varepsilon\|_{L^\infty(\mathbb{S}^{N-1})}\leq \omega(\varepsilon).
\end{equation}
\end{prop}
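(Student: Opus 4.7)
The plan is to combine the transformation-optics invariance from Lemma~\ref{lem:trans acoustics} with the virtual-space convergence estimate Theorem~\ref{thm:vmain1}, applied to $K_0=\{0\}$. The equality $\|\widetilde u_\infty^\varepsilon\|_{L^\infty(\mathbb{S}^{N-1})}=\|u_\infty^\varepsilon\|_{L^\infty(\mathbb{S}^{N-1})}$ is the easy half: because $F_\varepsilon=\mathrm{Identity}$ on $\mathbb{R}^N\setminus\Omega$ by \eqref{eq:F}, the push-forward identity gives $\widetilde u_\varepsilon(x)=u_\varepsilon(x)$ for $\|x\|$ large, so the two fields have the same asymptotic development~\eqref{eq:asymptotic} and hence the same far-field pattern, reproducing \eqref{eq:equal} in this setting.

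The substantive content is the bound $\|u_\infty^\varepsilon\|_{L^\infty}\leq\omega(\varepsilon)$. I would choose $K_0=\{0\}$, a compact convex set, and take $\tilde d$ to be the Minkowski functional of $D$, namely $\tilde d(x)=\inf\{t>0: x\in tD\}$. Since $D$ is bounded and contains $0$ in its interior, $\tilde d$ is Lipschitz and comparable to the Euclidean distance, and $\{\tilde d\leq\varepsilon\}=\varepsilon D=D_\varepsilon$, consistently with \eqref{eq:ppvv}. Then I would check Assumption~\ref{assumption1}b) for the lossy layer \eqref{eq:ly2}: setting $\omega_1(\varepsilon)=\varepsilon^{N-1}$, the bounds in \eqref{eq:ll1} come from \eqref{eq:ll2}; the upper ellipticity \eqref{firstcondsigma} on $\sigma_l^\varepsilon=c_1(x)\varepsilon^2$ holds because $\varepsilon\leq 1$; the normalization \eqref{secondcondsigmabis} amounts to $\varepsilon^{-2}\sigma_l^\varepsilon\nabla\tilde d\cdot\nabla\tilde d\leq \Lambda_0\|\nabla\tilde d\|_\infty^2$; and \eqref{secondassumption} follows from $|K_\varepsilon\setminus K_{\varepsilon/2}|=O(\varepsilon^N)$ against $|q_l^\varepsilon|=O(\varepsilon^{-N+1})$. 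Since $h=H=0$ in the physical space, also $h^\varepsilon=H^\varepsilon=0$ in the virtual space, and thus Assumption~\ref{assumption2}c) holds trivially (with any $E_3$), regardless of the configuration inside $K_{\varepsilon/2}$.

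With these verifications, Theorem~\ref{thm:vmain1} supplies a modulus $\omega$ with $\|u_\infty^\varepsilon-u_\infty\|_{L^\infty(\mathbb{S}^{N-1})}\leq C\omega(\varepsilon)$, where $u_\infty$ is the far-field of the limit problem~\eqref{uscateq} with $K_0=\{0\}$. The key observation that closes the argument is that a single point has zero $W^{1,2}$-capacity in $\mathbb{R}^N$ for $N\geq 2$: any $H^1_{\mathrm{loc}}$ radiating solution of $\Delta u+k^2u=0$ on $\mathbb{R}^N\setminus\{0\}$ extends across the origin, and by uniqueness the only such solution with incoming wave $e^{ikx\cdot d}$ is $u=u^i$ itself, so $u^s\equiv 0$ and $u_\infty\equiv 0$. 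Combined with the first step, this yields \eqref{eq:full1}.

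The main obstacle I anticipate is not any single hard estimate but rather the uniformity claim: that $\omega$ can be chosen independently of the cloaked data $(\widetilde\sigma_a,\widetilde q_a,\Sigma_1,\Sigma_2,\Sigma_3,s)$ and of the impinging direction $d$. This is where one must be careful in tracing through Theorem~\ref{thm:vmain1}: the structural constants $\omega_1,E_1,E_1',\Lambda,E_2$ entering the convergence estimate depend only on the lossy layer \eqref{eq:ly2} and on $(K_0,\tilde d,R)$, while the constant $E_3$ on the cloaked medium becomes vacuous in the passive case $h^\varepsilon=H^\varepsilon=0$. The dependence on $d$ enters only through the incident wave $u^i(x)=e^{ikx\cdot d}$, whose $C^1$ norm on $B_{R_1}$ is uniformly bounded in $d\in\mathbb{S}^{N-1}$; hence the constant $C_1$ in the proof of Theorem~\ref{thm:vmain1} (and therefore $\omega$) can be taken independently of $d$. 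Once this uniformity is confirmed, the proposition follows.
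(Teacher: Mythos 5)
Your proposal is correct and takes essentially the same approach as the paper: the paper's proof also sets $K_0=\{O\}$, chooses $\tilde d$ to be exactly the Minkowski gauge of $D$ (phrased there as $\tilde d(x)=\operatorname{dist}(\tilde x,O)^{-1}d(x)$ with $\tilde x$ the radial projection of $x$ onto $\partial D$), uses the zero capacity of a point to conclude $u=u^i$ and $u_\infty=0$, and then invokes Theorem~\ref{thm:vmain1}. Your more explicit verification of Assumption~\ref{assumption1}b) with $\omega_1(\varepsilon)=\varepsilon^{N-1}$ and of the uniformity in $d$ and in the cloaked data fills in details the paper states without elaboration.
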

\begin{proof}
Let $O$ be the origin. For any $x\in\mathbb{R}^N$, let $\tilde x\in\partial D$ be the point lying on the line passing through $O$ and $x$. Set
\[
\tilde d(x)=\mbox{dist}(\tilde x, O)^{-1} d(x)\quad\mbox{for any $x\in\mathbb{R}^N$}.
\]
By taking $K_0:=\{O\}$, it is easily seen that $K_\varepsilon=D_\varepsilon$ and $K_{\varepsilon/2}=D_{\varepsilon/2}$. Since $K_0$ has zero capacity, we know that the solution to \eqref{uscateq} is given by $u=u^i$. Consequently, $u^s=0$ and $u_\infty=0$. One readily has \eqref{eq:full1} by Theorem~\ref{thm:vmain1}.
\end{proof}

Proposition~\ref{prop:full1} indicates that one would have an approximate full invisibility cloak for the construction \eqref{eq:F1}--\eqref{eq:rc2}. The essential point in Proposition~\ref{prop:full1} is that a single point has zero capacity. We recall that $H^1(D)=H^1(D\backslash K_0)$ for any open set $D$ and any compact $K_0\subset D$ with zero capacity.
We would like to emphasize that by following the same spirit, and using Theorem~\ref{thm:vmain1}, one could have more approximate full invisibility cloaks. For example, in $\mathbb{R}^3$, a line segment is also of zero capacity, and hence one could achieve an approximate full cloak by blowing up a `line-segment-like' region in $\mathbb{R}^3$, namely $K_0$ is a line segment; or by blowing up a finite collection of `point-like' and `line-segment-like' regions. In Section~\ref{sect:ABC}, we shall give more discussion on how to construct an approximate full cloak by blowing up a `line-segment-like' region.

Finally, we consider the cloaking of active contents.

\begin{prop}\label{prop:full2}
Under the same assumptions of Proposition~\textnormal{\ref{prop:full1}}, we further assume that the source/sink term satisfies the following. We let $H=0$ and
$h\in L^2(\mathbb{R}^N\backslash\Sigma)$ be such that $supp(h)\subset D_{1/2}\backslash\Sigma$. Moreover, we require that
\begin{equation}\label{eq:abs cond}
\Im \widetilde{q}_a\geq \lambda_0>0\quad\mbox{on\ $supp(h)$},
\end{equation}
where $\lambda_0$ is a constant.

Then there exist $\varepsilon_0>0$ and a function $\omega:(0,\varepsilon_0]\to (0,+\infty]$ with $\lim_{s\to 0^+}\omega(s)=0 $, which is independent of $\widetilde\sigma_a, \Re\widetilde q_a, \Im\widetilde q_a|_{\{h=0\}}$ and $\Sigma_1,\Sigma_2,\Sigma_3$, $s$,  and of the impinging direction $d$, such that for any $\varepsilon<\varepsilon_0$
\begin{equation}\label{eq:full2}
\|\widetilde u_\infty^\varepsilon\|_{L^\infty(\mathbb{S}^{N-1})}=\|u_\infty^\varepsilon\|_{L^\infty(\mathbb{S}^{N-1})}\leq \omega(\varepsilon).
\end{equation}

\end{prop}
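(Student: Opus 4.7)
The plan is to follow the exact template of the proof of Proposition~\ref{prop:full1}: take $K_0=\{O\}$ so that $K_\varepsilon=D_\varepsilon$ and $K_{\varepsilon/2}=D_{\varepsilon/2}$ as before, verify the hypotheses of Theorem~\ref{thm:vmain1} for the pulled-back virtual configuration, and then observe that because a single point has zero capacity in $\mathbb{R}^N$ ($N\geq 2$), the limit problem \eqref{uscateq} has solution $u=u^i$ in $\mathbb{R}^N$, whence $u_\infty=0$ and the far-field bound \eqref{eq:full2} follows from Theorem~\ref{thm:vmain1}.

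The only genuinely new point with respect to Proposition~\ref{prop:full1} is the presence of the source $h$, which forces us to verify the source clause of either Assumption~\ref{assumption1}(c) or Assumption~\ref{assumption2}(c) for the virtual source $h^\varepsilon=(F_\varepsilon^{-1})_* h$. Since the proposition only assumes $\Im\widetilde q_a\geq\lambda_0$ on $\mathrm{supp}(h)$ and imposes no lower bound on $\Im\widetilde q_a$ off the support, we cannot guarantee the pointwise ratio bound $\sup q_1^\varepsilon\leq E_3\inf q_2^\varepsilon$ over all of $K_{\varepsilon/2}\setminus K^\varepsilon$ required by Assumption~\ref{assumption1}(c). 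Therefore I will invoke Theorem~\ref{thm:vmain1} via Assumption~\ref{assumption2}, which only requires $q_2^\varepsilon>0$ almost everywhere on $\{h^\varepsilon\neq 0\}$ and the integral bound \eqref{sourceassumptionbis} on that set. The hypothesis $H=0$ in the proposition matches $H^\varepsilon\equiv 0$ in Assumption~\ref{assumption2}(c).

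For the integral bound, the key step is the push-forward invariance noted in Section~\ref{sect:general construction}, namely
\[
\int_{K_{\varepsilon/2}\setminus K^\varepsilon}(k^2 q_2^\varepsilon)^{-1}|h^\varepsilon|^2\,dx
\;=\;\int_{D_{1/2}\setminus\widetilde\Sigma}(k^2\,\Im\widetilde q_a)^{-1}|h|^2\,d\widetilde x
\;\leq\;\frac{\|h\|_{L^2(D_{1/2}\setminus\widetilde\Sigma)}^2}{k^2\lambda_0}=:E_3,
\]
where the second step uses \eqref{eq:abs cond}. Because $h\in L^2$ with compact support in $D_{1/2}\setminus\widetilde\Sigma$ and $\lambda_0$ is a fixed constant, $E_3$ is independent of $\varepsilon$ and of the values of $\Im\widetilde q_a$ off $\mathrm{supp}(h)$. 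The lossy-layer clause Assumption~\ref{assumption1}(b) was already verified for the choice \eqref{eq:ly2}--\eqref{eq:ll3} in Proposition~\ref{prop:full1} and is inherited here unchanged.

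With Assumption~\ref{assumption2} in force and $K_0=\{O\}$, Theorem~\ref{thm:vmain1} yields a rate function $\omega$ depending only on $K_0$, $\tilde d$, $R$, the lossy layer and the constant $E_3$ above, hence independent of the admissible medium and obstacle inside $D_{1/2}$ away from $\mathrm{supp}(h)$ and independent of $d\in\mathbb{S}^{N-1}$ (since $\|u^i\|$ is $d$-independent). The main (and only) potential pitfall is to ensure that the modifications inside $K_{\varepsilon/2}$ coming from $h$ do not pollute the $\varepsilon$-independence of $E_3$; the push-forward identity above settles this. The final equality $\|\widetilde u_\infty^\varepsilon\|_{L^\infty(\mathbb{S}^{N-1})}=\|u_\infty^\varepsilon\|_{L^\infty(\mathbb{S}^{N-1})}$ is just \eqref{eq:equal} applied to $F_\varepsilon$, completing the proof.
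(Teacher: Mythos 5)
Your proposal follows the paper's own proof essentially verbatim: both verify the integral bound \eqref{sourceassumptionbis} via the push-forward invariance identity (yielding the same $\varepsilon$-independent constant $E_3 = k^{-2}\lambda_0^{-1}\|h\|_{L^2}^2$), both invoke Assumption~\ref{assumption2}(c) rather than Assumption~\ref{assumption1}(c), and both then reduce to the argument of Proposition~\ref{prop:full1} via Theorem~\ref{thm:vmain1}. Your explicit remark on \emph{why} Assumption~\ref{assumption1}(c) cannot be used here (no lower bound on $\Im\widetilde q_a$ off $\mathrm{supp}(h)$) is a welcome clarification that the paper leaves implicit, but the argument is the same.
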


\begin{proof}
It is straightforward to verify that
\begin{equation}\label{eq:abs cond 2}
\int_{K_{\varepsilon/2}\backslash K^\varepsilon} (k^2\Im q_a^\varepsilon)^{-1}|h^\varepsilon|^2=\int_{supp(h)} (k^2\Im \widetilde q_a)^{-1} |h|^2\leq k^{-2}\lambda_0^{-1}\int_{D_{1/2}\backslash\Sigma} |h|^2:=E_3<+\infty.
\end{equation}
Hence, Assumption 2, c) is satisfied. The proposition can be proved following a completely similar argument as that for Proposition~\ref{prop:full1} by using Theorem~\ref{thm:vmain1}.
\end{proof}
\begin{rem}
Proposition~\ref{prop:full2} indicates that in addition to passive mediums, the construction \eqref{eq:F1}--\eqref{eq:rc2} is also capable of nearly cloaking active contents. We need that at the place where the active source is located, the medium must be absorbing. It is recalled that the Helmholtz equation can also be used to describe the electromagnetic phenomena. In such a case, $h$ denotes an electric current density, whereas $\Im \widetilde q_a$ denotes the conductivity. At the place where $\Im \widetilde q_a=0$, one must have $h=0$ since there the medium is non-conducting. Hence, \eqref{eq:abs cond 2} is a reasonable physical condition. Moreover, we would like to emphasize that one can nearly cloak a more general source term of the form $-h+\mbox{div}(H)$ by using Assumption~1, c) by imposing certain physical conditions on $\widetilde\sigma_a$
and $\widetilde q_a$.
\end{rem}

\subsection{Regularized partial invisibility cloak}

Our construction of partial cloaking devices will rely on blowing up `partially' small regions in the virtual space. We first present our study in the virtual space.
Let
\begin{equation}\label{eq:k0 2d}
K_0:=\{-a\leq x_1\leq a\}\times \{x_2=0\}\quad\mbox{in\ \ $\mathbb{R}^2$},
\end{equation}
and
\begin{equation}\label{eq:k0 3d}
K_0:=\{-a\leq x_1\leq a\}\times\{ -b\leq x_2\leq b \}\times\{x_3=0\}\quad\mbox{in\ \ $\mathbb{R}^3$}.
\end{equation}

We note that $\nu=(0,1)$ in 2D and $\nu=(0,0,1)$ in 3D for $K_0$.
Let $0\leq\tau\leq 1$ and define
\begin{equation}\label{eq:unit sphere set2}
\mathcal{N}_\tau:=\{\theta\in\mathbb{S}^{N-1}:\
|\nu\cdot \theta|\leq\tau\}.
\end{equation}


Next, we consider the scattering problem \eqref{uscateq} with $K_0$ given in \eqref{eq:k0 2d} and \eqref{eq:k0 3d}, which is known as the {\it screen problem} in the literature.

\begin{prop}\label{prop:partial virtual}
Let $K_0$ be given in \eqref{eq:k0 2d} and \eqref{eq:k0 3d}, and $\mathcal{N}_\tau$ be given in 
\eqref{eq:unit sphere set2}. Let $u\in H_{loc}^1(\mathbb{R}^N\backslash K_0)$ be the solution to \eqref{uscateq} with $u^i(x)=e^{ikx\cdot d}$. Then there exists a constant $C$, depending only on $a, b$ and $k$, such that
\begin{equation}\label{eq:partial virtual 1}
|u_\infty(\hat{x}, d)|\leq C \tau\quad \mbox{for $\hat{x}\in\mathbb{S}^{N-1}$ and $d\in \mathcal{N}_\tau$},
\end{equation}
and
\begin{equation}\label{eq:partial virtual 2}
|u_\infty(\hat{x}, d)|\leq C \tau\quad \mbox{for $\hat{x}\in\mathcal{N}_\tau$ and $d\in \mathbb{S}_{N-1}$}.
\end{equation}
\end{prop}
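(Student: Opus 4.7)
The plan is to reduce both inequalities to a single case (tangential incidence) via a reciprocity argument, and then to observe that when $d$ is nearly tangential to the flat screen $K_0$, the incident plane wave almost satisfies the Neumann boundary condition, so the scattered field—driven entirely by the defect in the boundary data—is of the same order.

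More concretely, first I would rewrite \eqref{uscateq} in the standard form of a Neumann screen problem. Since $K_0$ is flat with constant unit normal $\nu$ (equal to $e_2$ in 2D and $e_3$ in 3D), the decomposition $u=u^i+u^s$ turns the Neumann condition $\nabla u\cdot\nu=0$ on $\partial K_0$ into an inhomogeneous condition
\begin{equation*}
\nabla u^s\cdot\nu = -\nabla u^i\cdot\nu = -ik(d\cdot\nu)\,e^{ikx\cdot d}\quad\text{on }\partial K_0,
\end{equation*}
interpreted on both sides of the screen. The right-hand side $g_d$ is a smooth function on $K_0$ whose pointwise modulus is bounded by $k|\nu\cdot d|\le k\tau$. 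By linearity and well-posedness of the exterior Neumann screen problem, the scattered field $u^s$ and hence the far-field pattern $u_\infty(\cdot,d)$ depend continuously on the boundary data $g_d$ in an appropriate norm (for instance $H^{-1/2}(\partial K_0)$ with a suitable trace pairing across the two sides of the screen). Since $|g_d|\le k\tau$ uniformly on the fixed bounded set $K_0$, the $H^{-1/2}$-norm of $g_d$ is bounded by a constant depending only on $a,b,k$ times $\tau$. This yields the estimate
\begin{equation*}
|u_\infty(\hat{x},d)|\leq C\tau\quad\text{for all }\hat{x}\in\mathbb{S}^{N-1},
\end{equation*}
which is \eqref{eq:partial virtual 1}.

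For \eqref{eq:partial virtual 2}, I would invoke the mixed reciprocity relation for Neumann scatterers, namely $u_\infty(\hat{x},d)=u_\infty(-d,-\hat{x})$, which is a classical consequence of Green's identities applied to the far-field representation (see, e.g., Colton--Kress). Because $\mathcal{N}_\tau$ is symmetric under $\theta\mapsto -\theta$, the condition $\hat{x}\in\mathcal{N}_\tau$ becomes $-\hat{x}\in\mathcal{N}_\tau$, so applying the first case to the ``incident direction'' $-\hat{x}$ and ``observation direction'' $-d$ immediately gives \eqref{eq:partial virtual 2} with the same constant $C$.

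The main technical point—and the only genuine obstacle—is justifying the bound on $u^s$ purely in terms of the tangential-incidence estimate on $g_d$, since the screen $K_0$ has no interior in 3D and its boundary is only Lipschitz. This requires invoking the correct variational framework for the Neumann screen problem, with trace spaces understood as pairs on the two sides of $K_0$ (as already set up in the ``slight abuse of notation'' discussion in Section~2), together with a Fredholm/unique-solvability argument to turn existence into a quantitative a priori bound. Once that functional-analytic framework is in place, the linear dependence of $u_\infty$ on $g_d$ gives the factor $\tau$ directly, and the constant $C$ depends only on $K_0$ (hence on $a,b$) and on $k$, as claimed.
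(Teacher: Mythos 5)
Your proposal is essentially the paper's proof: both decompose $u=u^i+u^s$ to obtain an inhomogeneous Neumann screen problem with boundary data $-\partial u^i/\partial\nu=-ik(\nu\cdot d)e^{ikx\cdot d}$ on $K_0$, bound the far-field pattern by this data via well-posedness of the forward scattering problem, and then transfer the estimate to $\hat{x}\in\mathcal{N}_\tau$ by the reciprocity relation $u_\infty(-d,-\hat{x})=u_\infty(\hat{x},d)$. The only cosmetic difference is that you measure the boundary data in $H^{-1/2}(\partial K_0)$ whereas the paper quotes the bound directly in $L^2(K_0)$, which in either case evaluates to $Ck|\nu\cdot d|$ times a quantity depending only on $a,b$, so the conclusion and the structure of the argument coincide.
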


\begin{proof}
By the well-posedness of the forward scattering problem, we know
\begin{equation}\label{peq:1}
\|u_\infty(\cdot, d)\|_{L^\infty(\mathbb{S}^{N-1})}\leq C \|\frac{\partial u^i}{\partial \nu}\|_{L^2(K_0)}=Ck|\nu\cdot d|(\mathcal{H}^{N-1}(K_0))^{1/2},
\end{equation}
from which one readily has \eqref{eq:partial virtual 1} by recalling the definition of $\mathcal{N}_{\tau}$. \eqref{eq:partial virtual 2} can be obtained from \eqref{eq:partial virtual 1} by using the following reciprocity relation (see, e.g., \cite{ColKre})
\begin{equation}\label{eq:reciprocity}
u_\infty(-d, -\hat{x})=u_\infty(\hat{x}, d)\quad\mbox{for\ $\hat{x}\in\mathbb{S}^{N-1}$ and $d\in\mathbb{S}^{N-1}$}.
\end{equation}

The proof is completed.
\end{proof}

Now, the construction of a partial cloak shall be based on the use of Theorem~\ref{thm:vmain1} and Proposition~\ref{prop:partial virtual}, similar to the one for the full cloaks in Section~\ref{subsect:full} by following the next three steps. First, one chooses $K_\varepsilon$, an $\varepsilon$-neighborhood of $K_0$, and a blow-up transformation $F_\varepsilon$, and through the push-forwards, one constructs the cloaking layer $(\Omega\backslash D; \widetilde\sigma_c^\varepsilon, \widetilde q_c^\varepsilon)$. Second, according to Assumption 1, b), a compatible lossy layer $(K_\varepsilon\backslash K_{\varepsilon/2}; \sigma_l^\varepsilon, q_l^\varepsilon)$ is chosen in the virtual space, and then by the push-forwards, one would have the corresponding lossy layer in the physical space. Finally, according to Assumption 1 or 2, c), one can determine the admissible media, obstacles, or sources that can be partially cloaked. Next, let us consider a simple 2D example by letting $K_\varepsilon=[-1-\varepsilon,1+\varepsilon]\times [-\varepsilon, \varepsilon]$, which is an $\varepsilon$-neighborhood of $K_0=[-1,1]\times\{0\}$. If a uniformly blow-up as the one in \eqref{eq:F2} is used, then $K_\varepsilon$ will be transformed into $[-1/\varepsilon-1,1/\varepsilon+1]\times[-1,1]$. In such a way, one would have a partial cloaking device of a very large size depending on $\varepsilon^{-1}$. For practical considerations, in the next section, we shall develop an assembled-by-components technique on blowing up a partially small region in constructing a regularized partial cloak of compact size.

\section{ABC geometry and concatenated construction of three specific cloaks}\label{sect:ABC}

In this section, we
shall first discuss the assembled-by-components (ABC) geometry in
the virtual and physical spaces, and then give the blow-up construction of three specific cloaks based on concatenating individual components. We would like to emphasize that we would not appeal for a most general study in this aspect, and instead we shall present our study based on these three specific examples. However, the technique developed could be straightforwardly extended to devising many other cloaks.

\subsection{ABC geometry in the virtual and physical spaces}\label{sect:51}

Define for $x=(x_1,\ldots,x_{N})\in\mathbb{R}^N$,
\begin{equation}\label{eq:weighted infinity norm}
|x|_{w,\infty}:=\max_{l}\{|w_lx_l|\}_{l=1}^N,
\end{equation}
and for $1\leq p<+\infty$,
\begin{equation}\label{eq:weighted p norm}
|x|_{w,p}:=\left(\sum_{l=1}^N |w_l x_l|^p\right)^{1/p},
\end{equation}
where $w=(w_1,\ldots,w_{N})\in\mathbb{R}^N$ with $0<w_l\leq 1$
denotes a weight. For any $r>0$ we define the $(w,l^p)$-ball and
sphere, respectively, as follows
\begin{equation}\label{eq:lp ball}
\Omega_{w,r}^{(p)}:=\{x\in\mathbb{R}^N:\
|x|_{w,p}<r\}\quad\mbox{and}\quad
S_{w,r}^{(p)}:=\partial\Omega_{w,r}^{(p)},\quad 1\le p \le
\infty.
\end{equation}
Two semi-$(w,l^p)$-balls can be defined by
\[
{\Omega_{w,r,k}^{(p),+}}:=\{x\in\Omega_{w,r}^{(p)}:\
x_k>0\}\quad\mbox{and}\quad
{\Omega_{w,r,k}^{(p),-}}:=\{x\in\Omega_{w,r}^{(p)}:\ x_k<0\}.
\]
$\Omega_{w,r}^{(p)}$ with $p=1,2,\infty$ will be the three
{\it base geometries} for our subsequent constructions. Next, we
show that by concatenating the base geometries, one could obtain
more practical geometries in both the virtual and physical spaces.

In the sequel, we denote $
x^{\check{k}}:=(x_1,\ldots,x_{k-1},x_{k+1},\ldots,x_N)\in\mathbb{R}^{N-1}$
where the superscript $\check{k}$ indicates that the $k$-th
component $x_k$ is dropped. Thus $N-1$ dimensional
$(w^{\check{k}},l^p)$-balls and semi-balls can be defined,
respectively, by
\[
{\Omega_{w^{\check{k}},r}^{(p)}}:=\{
|x^{\check{k}}|_{w^{\check{k}},p}<r  \}\quad\mbox{and}\quad
{\Omega_{w^{\check{k}},r,j}^{(p),\pm}}:=\{x^{\check{k}}\in\Omega_{w^{\check{k}},r}^{(p)}:\
x_j  \gtrless 0\},
\]
where $j$ is different from $k$.

\subsubsection{2D ABC example $\mathbf{C}$}

%
%

We start with a simple 2D example. Let $w_0:=[1,1]$ and
\begin{equation}\label{eq:c1}
\mathbf{C}_L={\Omega_{w_0,r,1}^{(p),-}}+[-a,0],\quad
\mathbf{C}_R={\Omega_{w_0,r,1}^{(p'),+}}+[a,0]
\end{equation}
and
\begin{equation}\label{eq:c2}
\mathbf{C}_M=\{-a\leq x_1\leq a\}\times\{-r\leq x_2\leq r\},
\end{equation}
where $r$ and $a$ are positive constants, that we shall specify in
the following. By assembling the three components, we obtain
\begin{equation}\label{eq:c3}
\mathbf{C}_{[w_0,r,a]}=\mathbf{C}_L\cup\mathbf{C}_M\cup\mathbf{C}_R.
\end{equation}

\begin{figure}[htbp]
\center
\includegraphics[width=0.32\textwidth]{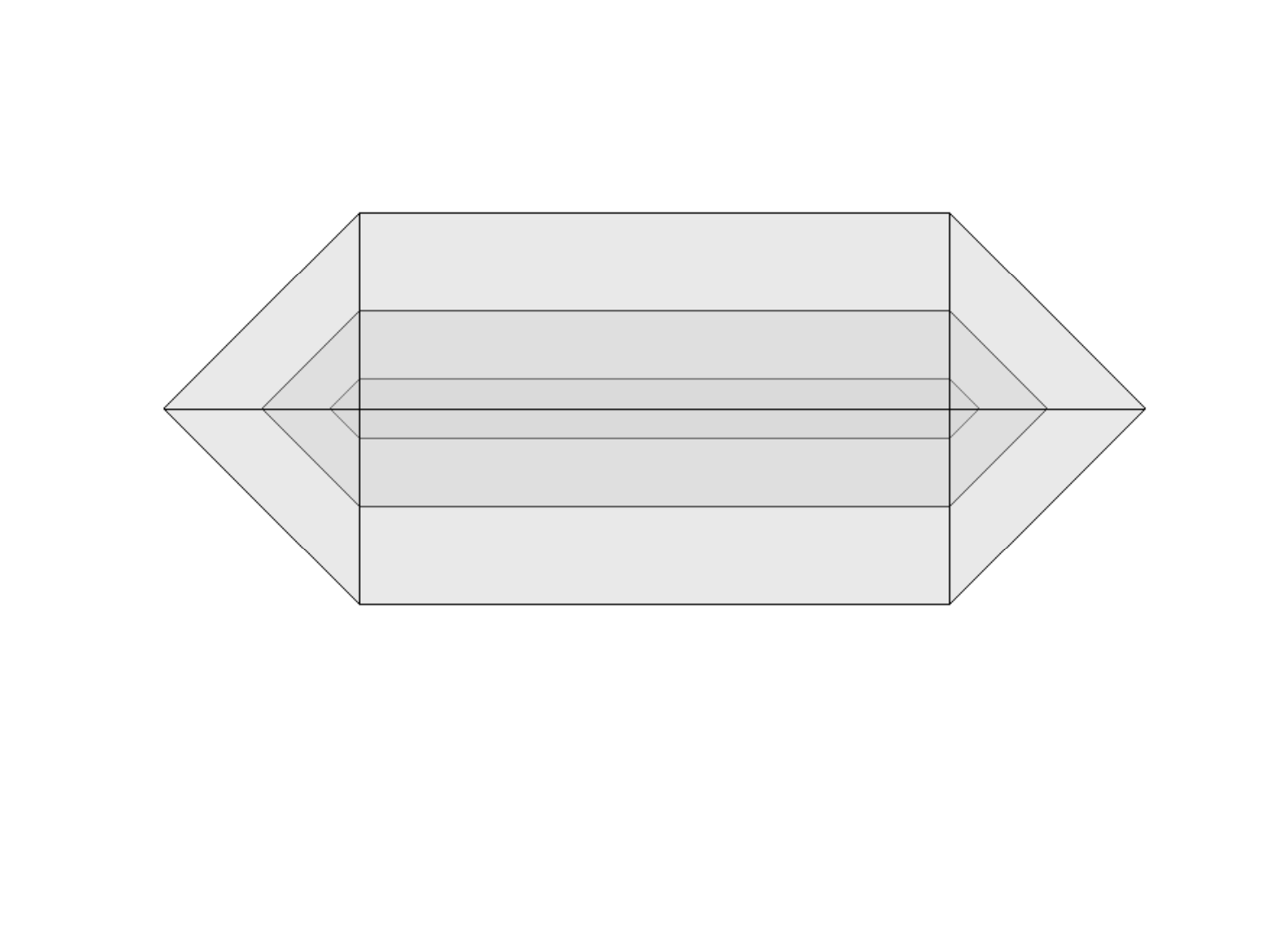}
\includegraphics[width=0.32\textwidth]{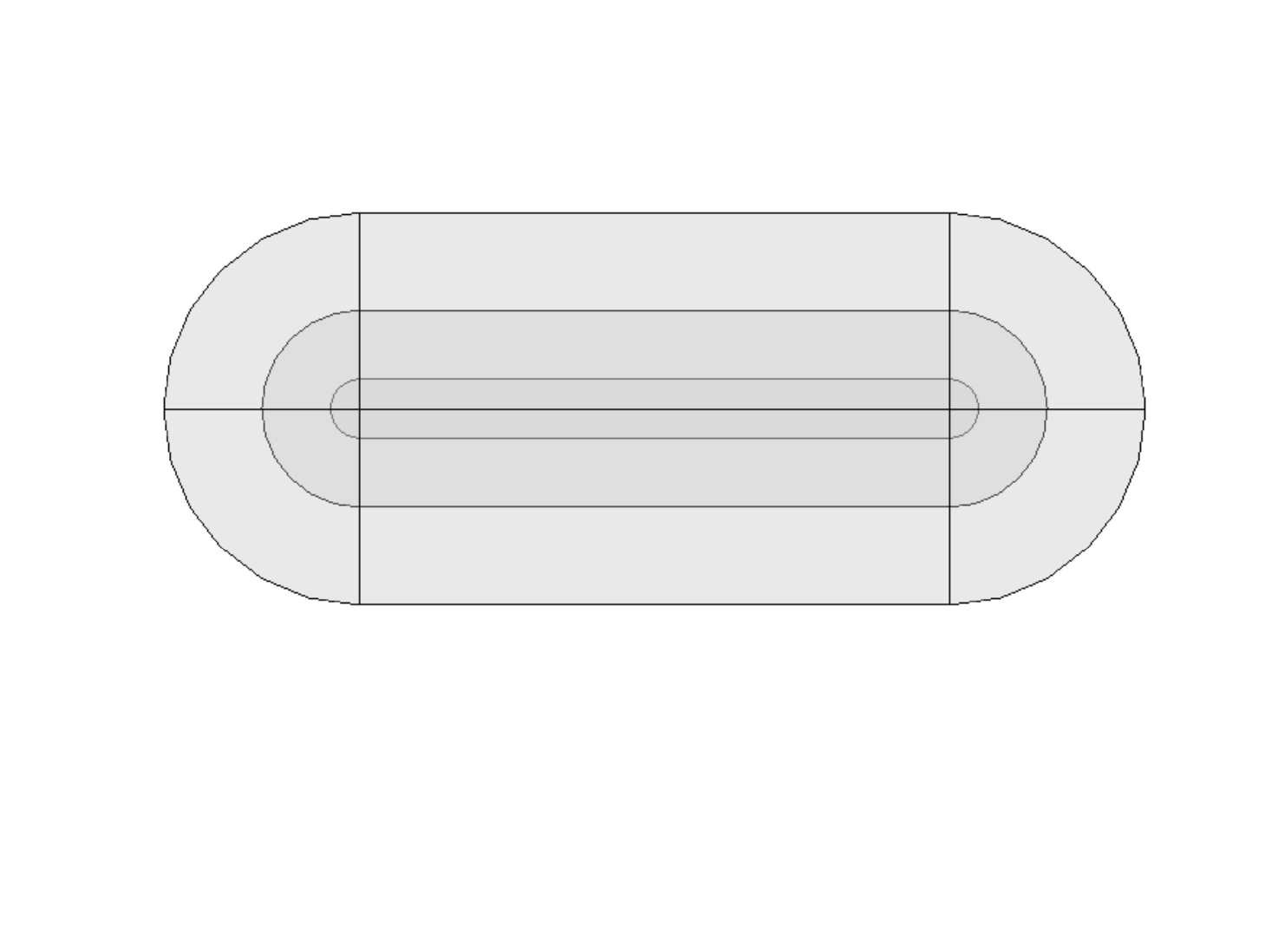}
\includegraphics[width=0.32\textwidth]{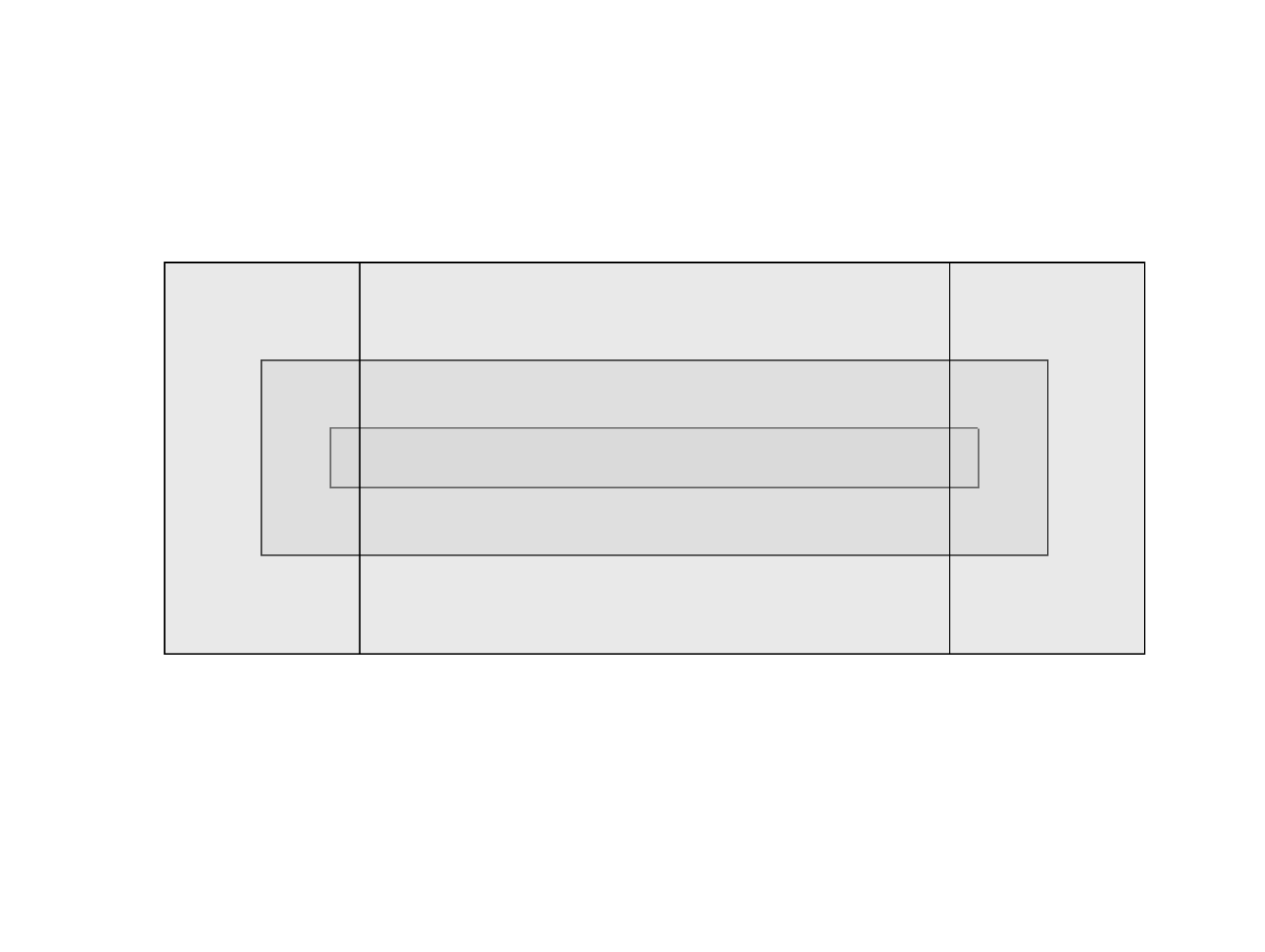}

\hfill{}(a)~~~~~~~~~~~~~\hfill{}(b)\hfill{}~~~~~~~~~~~(c)\hfill{}

\caption{\label{fig:mapping_2d} Two dimensional ABC geometry $\mathbf{C}$ depicted in both the virtual space (between innermost and
outermost boundaries) and the physical space (between intermediate
and outermost boundaries). From left to right:  $l^1$, $l^2$ and
$l^\infty$ cloaks, respectively. }
\end{figure}

If one takes $p=p'=2$, then $\mathbf{C}_{[w_0,r,a]}$ is a 2D {\it
capsule}; if one takes $p=\infty$ and $p'=1$, then
$\mathbf{C}_{[w_0,r,a]}$ gives a 2D {\it nail}; and if
$p=p'=\infty$, then $\mathbf{C}_{[w_0,r,a]}$ is a {\it rectangle};
see Figure~\ref{fig:mapping_2d} for these three constructions.
In our subsequent construction of the
partial cloaking device, we would take $\mathbf{C}_{[w_0,\varepsilon,a]}$
as the `partially' small region in the virtual space, whereas
$\mathbf{C}_{[w_0,r_2,a]}\backslash \mathbf{C}_{[w_0,r_1,a]}$ is the
cloaking region in the physical space. Here $r_2>r_1>\varepsilon$ with
$r_l\sim 1$, $l=1,2$ and $a$ is a free parameter ranges from $\varepsilon$
to $1$ that determines the apertures of the cloaking device. It can
be easily seen that if $a=1$, then $\mathbf{C}_{[w_0,\varepsilon,1]}$ is
`partially' small since it is only small in one dimension and of
regular size in another dimension, whereas if $a=\varepsilon$ then it is
uniformly small since, along both the $x_1$- and $x_2$-dimensions,
the region is small.

%

\subsubsection{3D ABC example $\mathbf{D}$}

We next consider a 3D example. Let $w_0:=[1,1,1]$ and define three
detached components
\begin{equation}\label{eq:d1}
\mathbf{D}_L={\Omega_{w_0,r,1}^{(p),-}}+[-a,0,0],\quad
\mathbf{D}_R={\Omega_{w_0,r,1}^{(p),+}}+[a,0,0]
\end{equation}
and
\begin{equation}\label{eq:d2}
\mathbf{D}_M=\{-a\leq x_1\leq
a\}\times\{x^{\check{1}}\in\overline{\Omega_{w^{\check{1}},r}^{(p)}}\},
\end{equation}
where $r$ and $a$ are positive constants, that we shall specify in
the following. By assembling the three components as shown in
Figure~\ref{fig:D:construction}, we obtain the \emph{slender} cloak
\begin{equation}\label{eq:d3}
\mathbf{D}_{[w,r,a]}=\mathbf{D}_L\cup\mathbf{D}_M\cup\mathbf{D}_R.
\end{equation}

\begin{figure}[htbp]
\hfill{}

\hfill{}\includegraphics[width=0.5\textwidth]{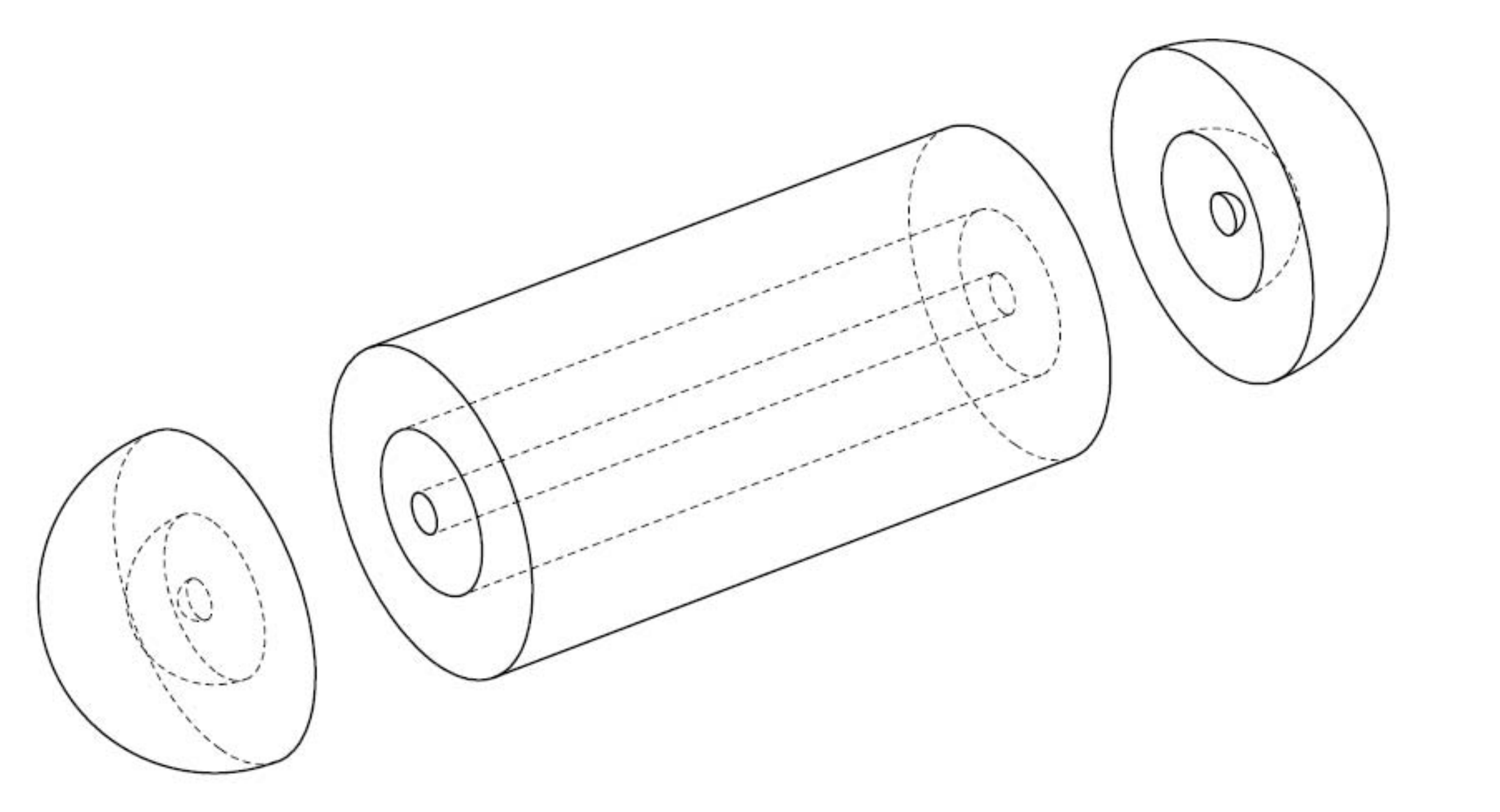}\hfill{}

\hfill{}(a) Detached components in Type $\mathbf{D}$ cloak.\hfill{}

\hfill{}\includegraphics[width=0.5\textwidth]{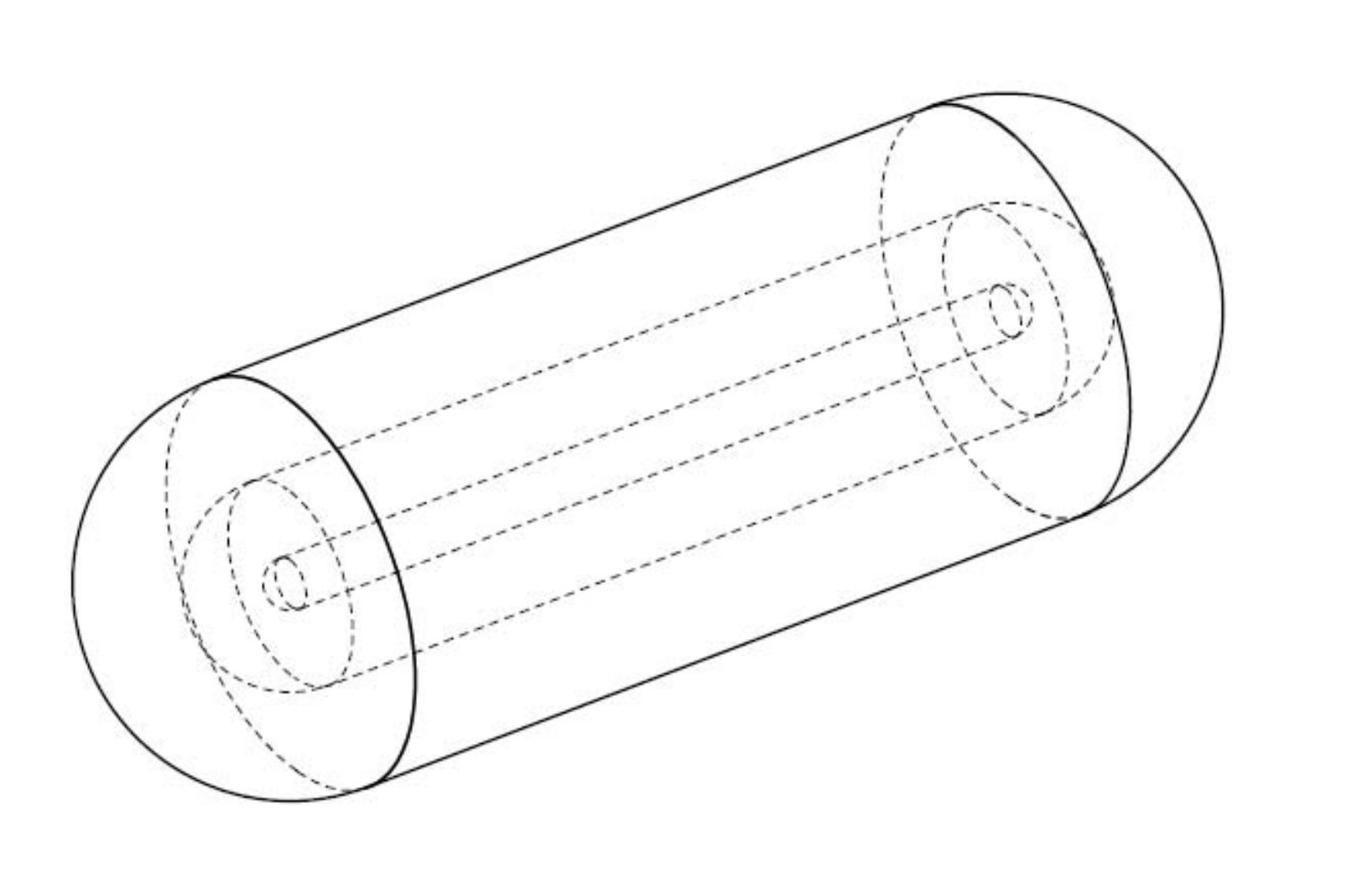}\hfill{}

\hfill{}(b) Slender cloak with assembled components.\hfill{}

\caption{\label{fig:D:construction} Schematic illustration for the construction
of Type $\mathbf{D}$ cloaks.}
\end{figure}

For more concrete examples of this slender cloak construction, let
$w_0=[1,1,1]$, $a=1$. We take $r=\varepsilon$ and $r=1$, respectively, to
represent the geometries in the virtual and physical spaces.
If $p=p'=2$, then
$\mathbf{D}_{[w_0,r,1]}$ is a 3D {\it capsule}; if $p=\infty$ and
$p'=1$, then $\mathbf{D}_{[w_0,r,1]}$ is a 3D {\it nail}; and if
$p=p'=\infty$, then $\mathbf{D}_{[w_0,r,1]}$ is a {\it
rectangular-prism}; see Figure~\ref{fig:mapping_3d} for illustration.
In our subsequent discussion, $\mathbf{D}_{[w_0,\varepsilon,1]}$ will be
the region in the virtual space whereas $\mathbf{D}_{[w_0,1,1]}$
will be the region in the physical space. As can be easily seen, in
the virtual space, $\mathbf{D}_{[w_0,1,\varepsilon]}$ is small along the
$x_2$-, $x_3$-dimensions while it is large along the $x_1$-dimension.
For our subsequent study, we are also interested in the case when
the region in the virtual space is only small in the $x_3$-dimension
while it is large in the $x_1$-, $x_2$-dimensions. It is directly
verified that $\mathbf{D}_{[w_\varepsilon,1,\varepsilon]}$ with
$w_\varepsilon=[1,\varepsilon,1]$ satisfies such requirement. However, it can also
be verified that for such ${w}_\varepsilon$, in the physical space
$\mathbf{D}_{[w_\varepsilon,1,1]}$ is very large along the $x_2$-dimension,
which is actually of size $1/\varepsilon$. In order to construct a more
practical partial cloaking device with its size independent of the
regularization parameter $\varepsilon$, we would like to further develop
the ABC geometry in the following. It is emphasized again that this is our major motivation in developing the ABC geometry.

\begin{figure}[htbp]
\center
\includegraphics[width=0.32\textwidth]{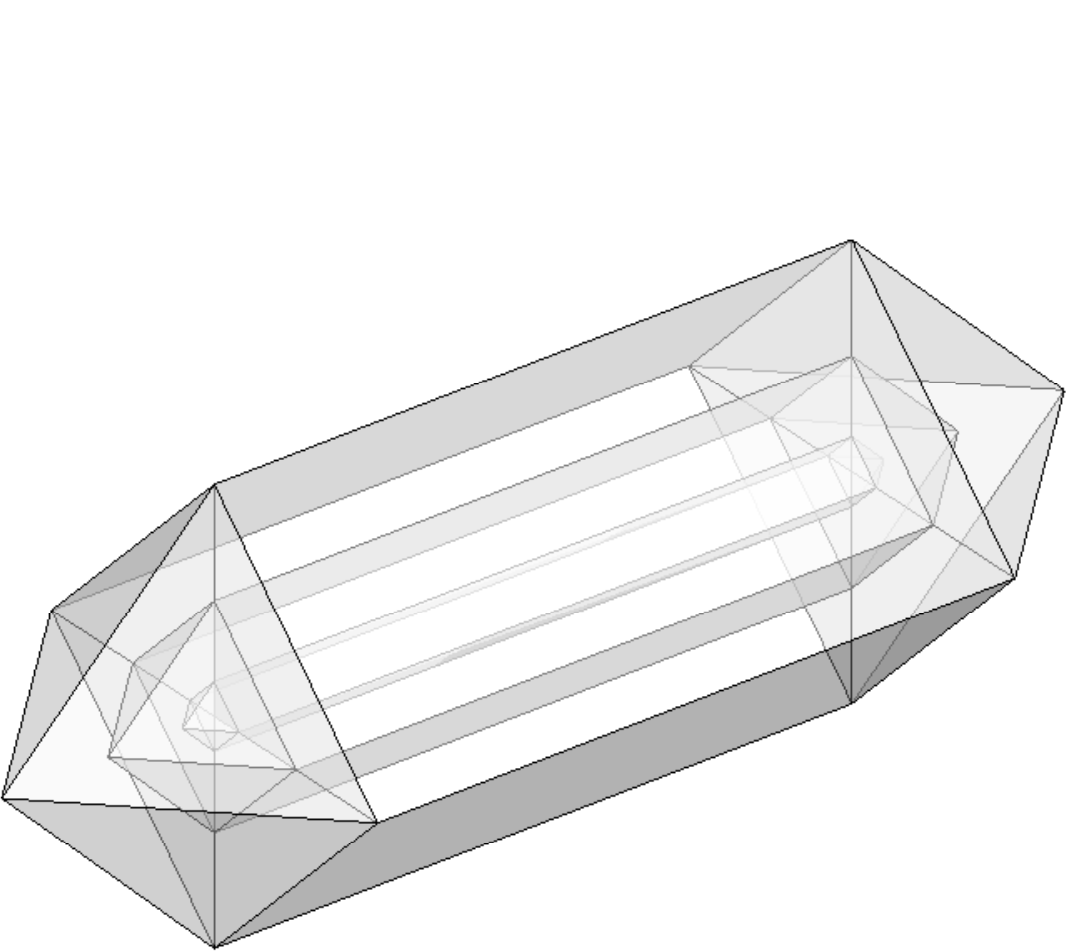}
\includegraphics[width=0.32\textwidth]{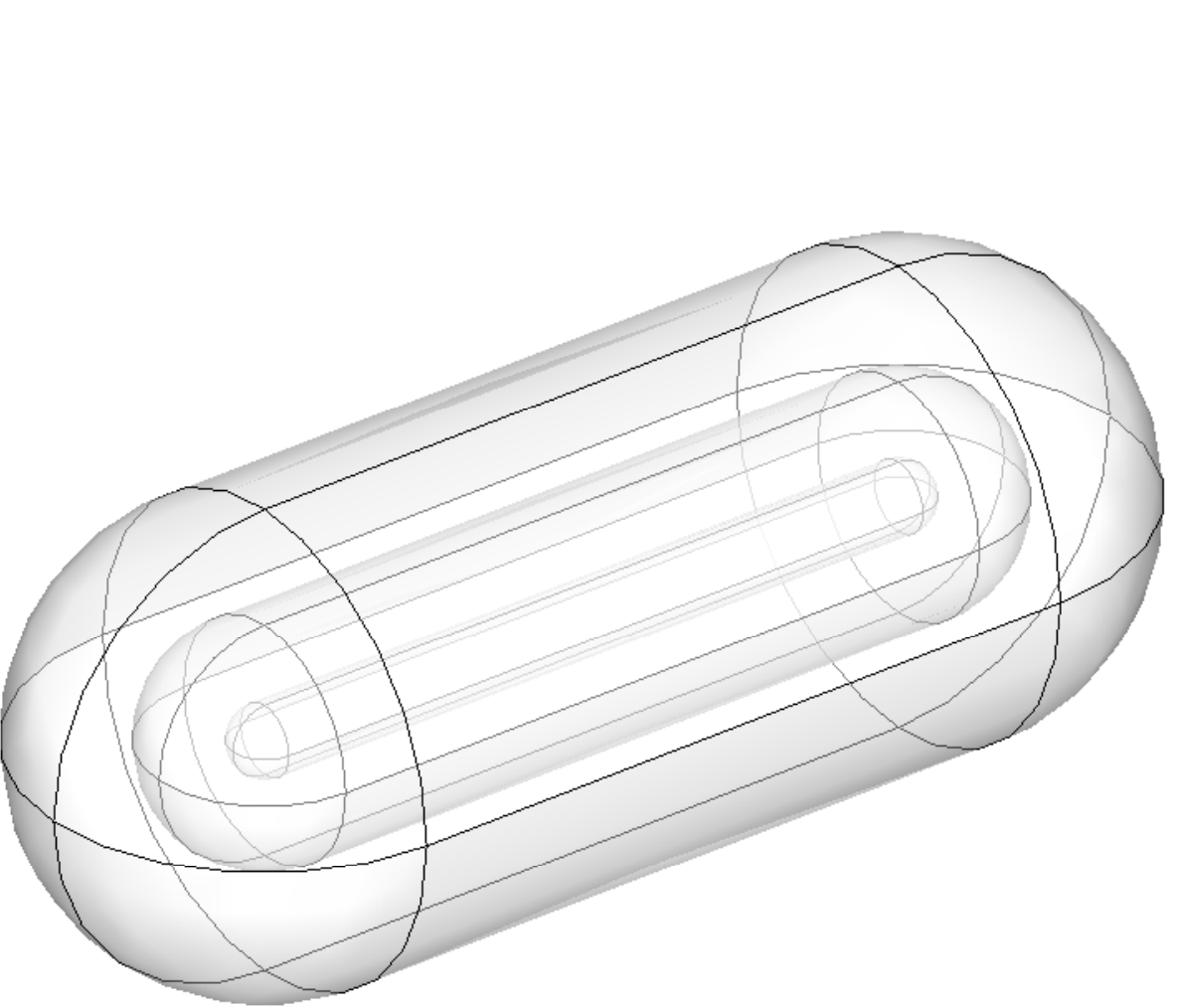}
\includegraphics[width=0.32\textwidth]{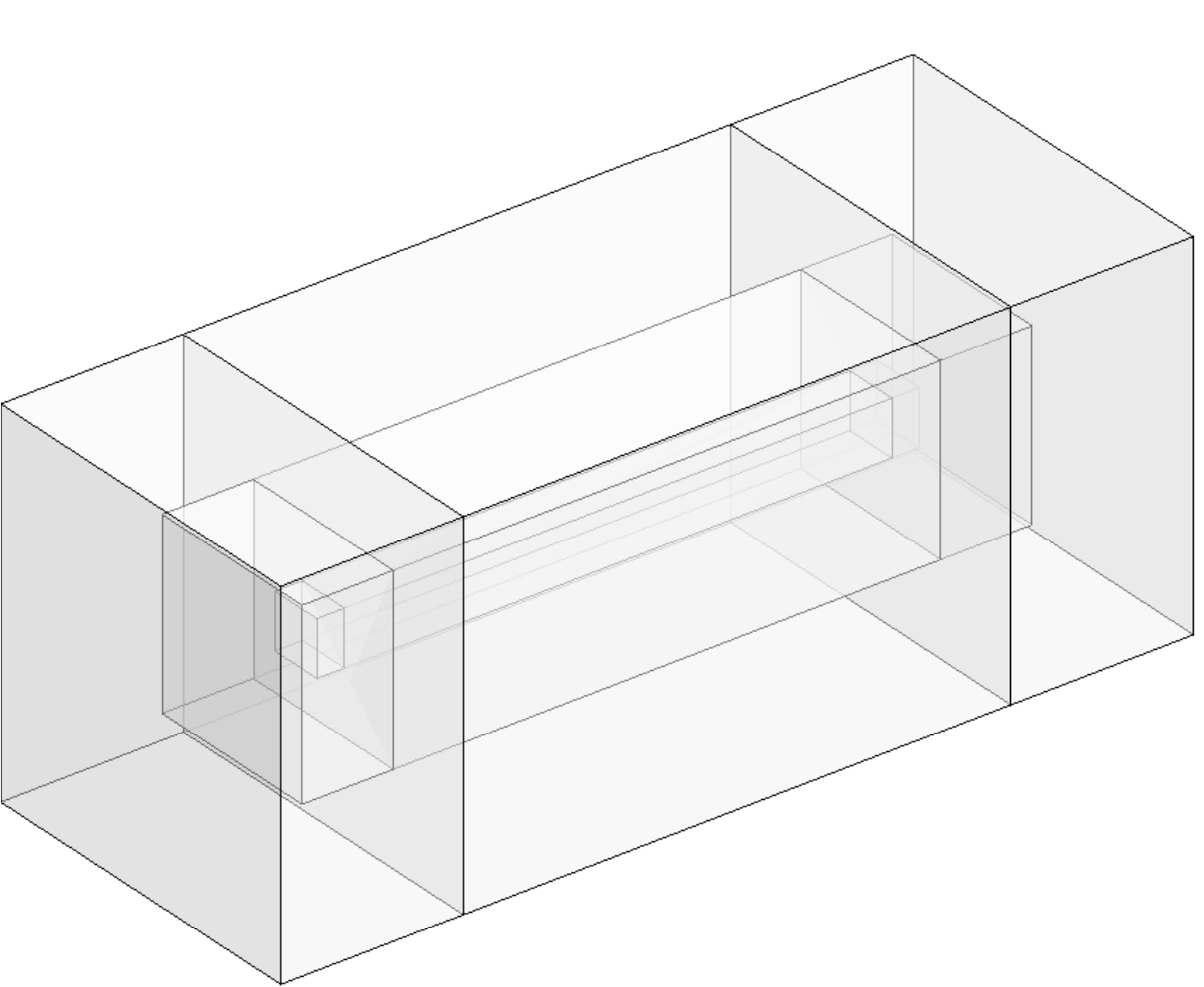}

\caption{\label{fig:mapping_3d} Three dimensional
ABC geometry $\mathbf{D}$ depicted in both the virtual space (between
innermost and outermost boundaries) and the physical space (between
intermediate and outermost boundaries). From left to right:  $l^1$,
$l^2$ and $l^\infty$ cloaks, respectively. }
\end{figure}

\begin{figure}[htbp]
\center

\hfill{}

\hfill{}\includegraphics[width=0.7\textwidth]{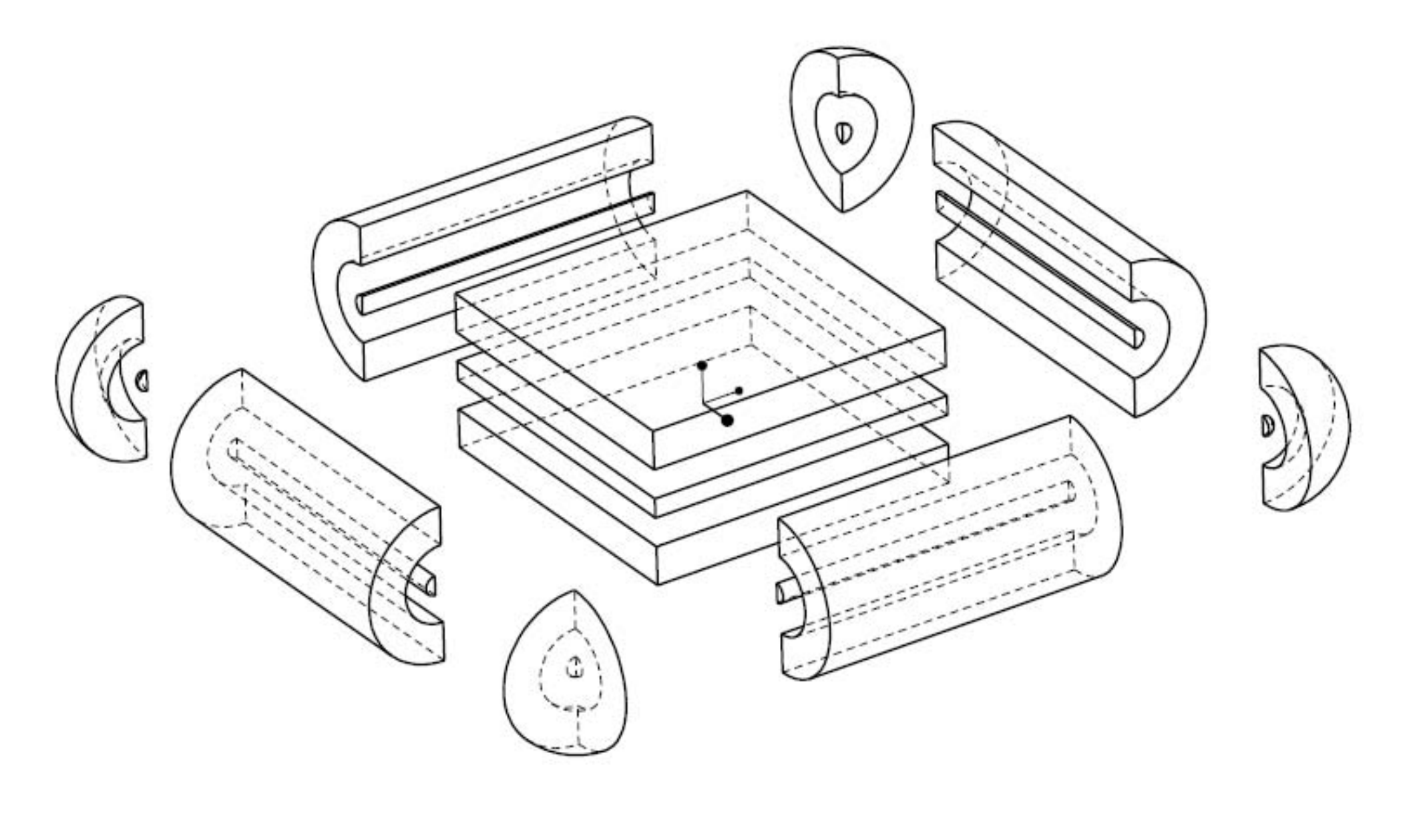}\hfill{}

\hfill{}(a) Detached components in Type $\mathbf{E}$ cloak.\hfill{}

\hfill{}\includegraphics[width=0.7\textwidth]{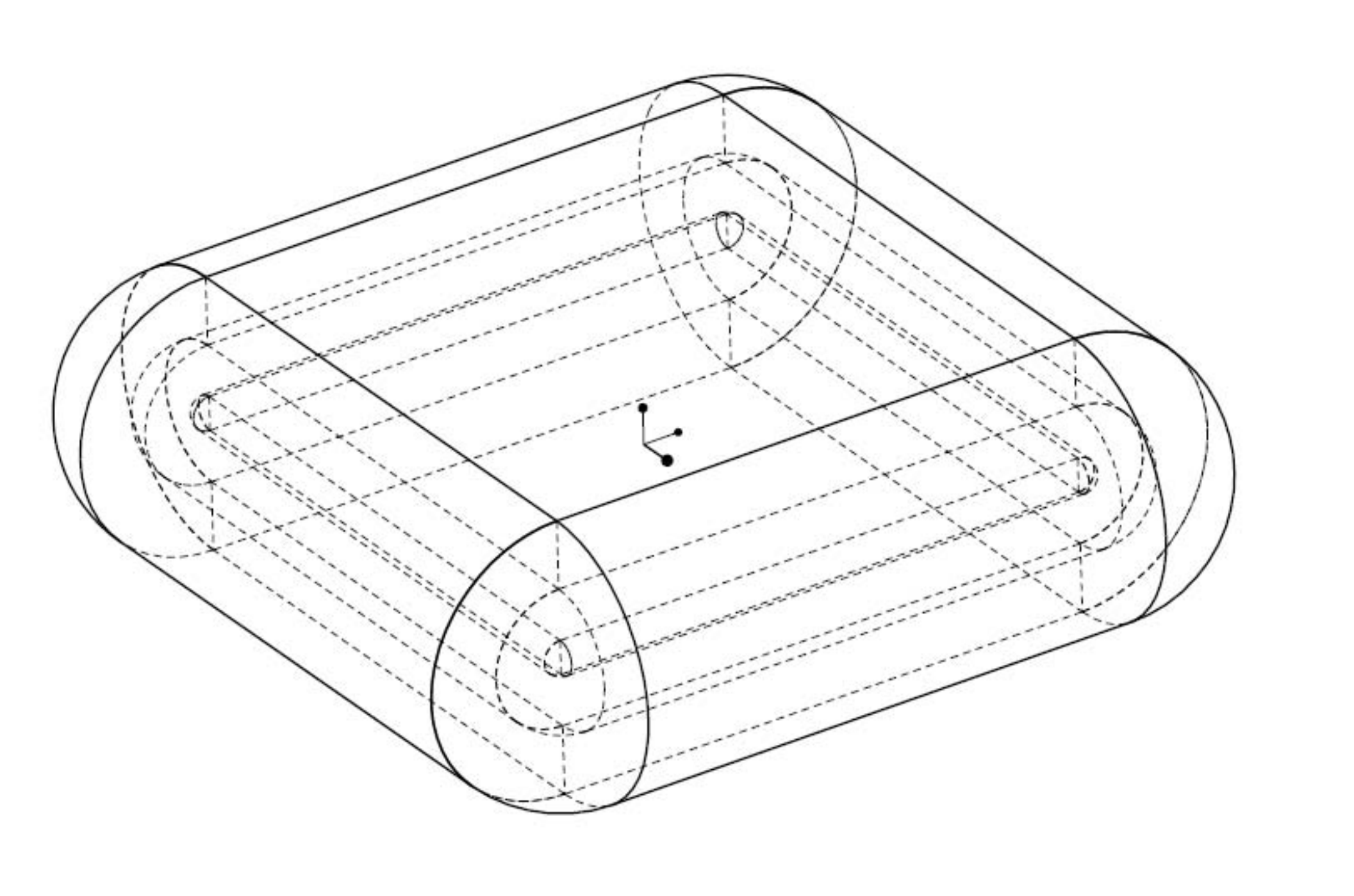}\hfill{}

\hfill{}(b) Flat cloak with assembled components.
\hfill{}

\caption{\label{fig:plane_3d} Schematic illustration for the construction of Type
$\mathbf{E}$ cloaks.}
\end{figure}

%
%

\subsubsection{3D ABC example $\mathbf{E}$}

We next consider a specific example for further developing the ABC
geometry (see Figure~\ref{fig:plane_3d}). Let
\begin{equation}\label{eq:E0}
\mathbf{E}_0=\{-a\leq x_1\leq a\}\times\{-b \leq x_2\leq b\}\times \{-r\leq x_3\leq r\},
\end{equation}
where $r=\varepsilon$ or $r\sim 1$, and $a,b$ are positive constants. For
$x=[x_1,x_2,x_3]\in\mathbb{R}^3$, let  $w:=[1,1,1]$ and
\begin{equation}\label{eq:E12}
\begin{split}
\mathbf{E}_1^{\pm}=&\{x:\ x^{\check{2}}\in\Omega_{w^{\check{2}},r,1}^{(p),\pm}\pm [a,0,0], -b\leq x_2\leq b\},\\
\mathbf{E}_2^{\pm}=&\{x:\
x^{\check{1}}\in\Omega_{w^{\check{1}},r,2}^{(p),\pm}\pm [0,b,0],
-a\leq x_1\leq a\},
\end{split}
\end{equation}
and
\begin{equation}\label{eq:E34}
\begin{split}
\mathbf{E}_3^{\pm}=& \{x+[a,\pm b, 0]:\ x\in\Omega_{w,r}^{(p)}\ \mbox{with}\ x_1>0\ \mbox{and}\ \pm x_2>0\},\\
\mathbf{E}_4^{\pm}=& \{x+[-a, \pm b, 0]:\ x\in \Omega_{w,r}^{(p)}\
\mbox{with}\ x_1<0\ \mbox{and}\ \pm x_2>0\}.
\end{split}
\end{equation}
Let
\begin{equation}\label{eq:E}
\mathbf{E}_{[w_0,r,a,b]}=\mathbf{E}_0\cup\left(\bigcup_{l=1}^4\mathbf{E}_l^{\pm}\right).
\end{equation}
$\mathbf{E}$ is a \emph{rescue cushion} composed of nine components:
the centered component $\mathbf{E}_0$ which is a cuboid of
dimensions $2a, 2b$ and $2r$; four components
$\mathbf{E}_{1,2}^{\pm}$ attached to the side faces and four
components $\mathbf{E}_{3,4}^\pm$ assembled to the four corners. In
our subsequent study, we would take $r=\varepsilon$ as the geometry in the
virtual space while $r\sim 1$ as the geometry in the physical space.

\subsection{Blow-up construction and concatenation}\label{sect:52}
We shall be concerned with two basic types of blow-up transformations. Henceforth, let $r_2>r_1>\varepsilon>0$. Set
\begin{equation}\label{eq:transformation}
F_\varepsilon(x)=\left(A+B|x|_{w,p}\right)\frac{x}{|x|_{w,p}},
\end{equation}
where
\begin{equation}\label{eq:coefficients}
A=\frac{r_1-\varepsilon}{r_2-\varepsilon}r_2,\quad B=\frac{r_2-r_1}{r_2-\varepsilon}.
\end{equation}
It is verified directly that $F_\varepsilon$ blows up $\Omega_{w,\varepsilon}^{(p)}$ to
$\Omega_{w,r_1}^{(p)}$ within $\Omega_{w,r_2}^{(p)}$, namely
\begin{equation}\label{eq:blow up fact}
F_\varepsilon\left(\Omega_{w,r_2}^{(p)}\backslash\overline{\Omega_{w,\varepsilon}^{(p)}}\right)=\Omega_{w,r_2}^{(p)}\backslash\overline{\Omega_{w,r_1}^{(p)}},\quad
F_\varepsilon|_{S_{w,r_2}^{(p)}}=\mbox{Identity}.
\end{equation}
For $A$ and $B$ given in \eqref{eq:coefficients}, we let
\begin{equation}\label{eq:G1}
G_\varepsilon(x)=\begin{cases}
& \displaystyle{(A+B|x_N|)}\ \mbox{sign}(x_N),\\
&\ \ \ \displaystyle{ x_l,\ \ \ 1\leq l\leq N-1}
\end{cases}
\end{equation}
and it can be verified that
\begin{equation}\label{eq:blow up fact 2}
G_\varepsilon\left(Q_{r_2}\backslash Q_{\varepsilon}\right)=Q_{r_2}\backslash Q_{r_1},\quad G_\varepsilon|_{\Pi_{r_2}}=\mbox{Identity},
\end{equation}
where
\begin{align*}
Q_r:=&\{|x_l|\leq a_l:\ 1\leq l\leq N-1\}\times \{|x_N|\leq r\},\\
\Pi_r:=& \{|x_l|\leq a_l:\ 1\leq l\leq N-1\}\times \{|x_N|= r\}.
\end{align*}

Now, by using blow-up transformations of type
\eqref{eq:transformation} or type \eqref{eq:G1}, and concatenating
each sub-component, it is straightforward to see that there exists a
bi-Lipschitz and orientation-preserving map $\mathscr{F}_1$, such that
for the region constructed in \eqref{eq:c3}
\begin{equation}\label{eq:trans F}
\mathscr{F}_1\left(\mathbf{C}_{[w_0,r_2,a]}\backslash \mathbf{C}_{[w_0,\varepsilon,a]}\right)=\mathbf{C}_{[w_0,r_2,a]}\backslash \mathbf{C}_{[w_0,r_1,a]},\quad \mathscr{F}_1|_{\partial\mathbf{C}_{[w_0,r_2,a]}}=\mbox{Identity}.
\end{equation}
In a similar manner, one can show that for $\mathbf{D}$ and $\mathbf{E}$, respectively, in \eqref{eq:d3} and \eqref{eq:E}, there exist $\mathscr{G}_1$ and $\mathscr{H}_1$ such that
\begin{equation}\label{eq:G}
\mathscr{G}_1\left(\mathbf{D}_{[w,r_2,a]}\backslash\mathbf{D}_{[w,\varepsilon,a]}\right)=\mathbf{D}_{[w,r_2,a]}\backslash\mathbf{D}_{[w,r_1,a]},\quad \mathscr{G}_1|_{\partial\mathbf{D}_{[w,r_2,a]}}=\mbox{Identity},
\end{equation}
and
\begin{equation}\label{eq:H}
\mathscr{H}_1\left(\mathbf{E}_{[w_0,r_2,a,b]}\backslash\mathbf{E}_{[w_0,\varepsilon,a,b]}\right)=\mathbf{E}_{[w_0,r_2,a,b]}\backslash\mathbf{E}_{[w_0,r_1,a,b]},\quad \mathscr{H}_1|_{\partial\mathbf{E}_{[w_0,r_2,a,b]}}=\mbox{Identity}.
\end{equation}

\subsection{Partial cloaking devices}\label{sec:cloak:formulae}

With the preparations in Sections~\ref{sect:51} and \ref{sect:52}, we shall present our cloaking construction in this section. We start with a simple 2D case. Let $\mathscr{F}_1$ be given in \eqref{eq:trans F} and set
\begin{equation}
\mathscr{F}(x)=\begin{cases}
\text{Identity} & \mbox{for $x\in \mathbb{R}^2\backslash \overline{\mathbf{C}}_{[w_0, r_2,a]}$},\\
\mathscr{F}_{1}(x)\qquad & \mbox{for\ \ensuremath{x\in\mathbf{C}_{[w_{0},r_{2},a]}\backslash\overline{\mathbf{C}}_{[w_{0},\varepsilon,a]}}},\\
\mathscr{F}_{s}(x)\qquad & \mbox{for\
\ensuremath{x\in\mathbf{C}_{[w_{0},\varepsilon,a]}=\mathbf{C}_{L}\cup\mathbf{C}_{M}\cup\mathbf{C}_{R}}},
\end{cases}\label{eq:transff}
\end{equation}
where
\begin{equation}
\mathscr{F}_{s}\,:\, x=\left(\begin{array}{c}
x_{1}\\
x_{2}
\end{array}\right)\mapsto\begin{cases}
\left(\begin{array}{c}
\frac{r_{1}}{\varepsilon}\left(x_{1}+a\right)-a\\
\frac{r_{1}}{\varepsilon}x_{2}
\end{array}\right)\qquad & \mbox{for\ \ensuremath{x\in\mathbf{C}_{L}}},\\
\left(\begin{array}{c}
x_{1}\\
\frac{r_{1}}{\varepsilon}x_{2}
\end{array}\right) & \mbox{for\ \ensuremath{x\in\mathbf{C}_{M}}},\\
\left(\begin{array}{c}
\frac{r_{1}}{\varepsilon}\left(x_{1}-a\right)+a\\
\frac{r_{1}}{\varepsilon}x_{2}
\end{array}\right) & \mbox{for\ \ensuremath{x\in\mathbf{C}_{R}}}.
\end{cases}\label{eq:transFs}
\end{equation}
 %
Clearly, we have that $\mathscr{F}: \mathbf{C}_{[w_0,r_2,a]}\rightarrow
\mathbf{C}_{[w_0,r_2,a]}$ is bi-Lipschitz and orientation-preserving
and $\mathscr{F}|_{\partial\mathbf{C}_{[w_0,r_2,a]}}$
$=\mbox{Identity}.$ Next, let
\begin{equation}\label{eq:fsh layer}
\left( \mathbf{C}_{[w_0,\varepsilon,a]} \backslash\overline{\mathbf{C}}_{[w_0,\varepsilon/2,a]}; \sigma_l^\varepsilon, q_l^\varepsilon \right), \quad \sigma_l^\varepsilon(x)=\begin{pmatrix} c_1(x) & 0\\
0 & c_2(x) \varepsilon^2
\end{pmatrix}, \ q_l^\varepsilon(x)=(c_3(x)+ic_4(x))\varepsilon^{-1/2},
\end{equation}
where $c_l, 1\leq l\leq 4$ are positive functions satisfying
\[
\lambda_0\leq c_l(x)\leq \Lambda_0\quad \mbox{for a.e. $x\in \mathbf{C}_{[w_0,\varepsilon,a]} \backslash\overline{\mathbf{C}}_{[w_0,\varepsilon/2,a]}$},
\]
with $\lambda_0$ and $\Lambda_0$ two positive constants independent of $\varepsilon$. Then we let
\begin{equation}\label{eq:lossy c}
\left( \mathbf{C}_{[w_0,r_1,a]} \backslash\overline{\mathbf{C}}_{[w_0,r_1/2,a]}; \widetilde\sigma^\varepsilon, \widetilde q^\varepsilon \right)=\mathscr{F}_*\left( \mathbf{C}_{[w_0,\varepsilon,a]} \backslash\overline{\mathbf{C}}_{[w_0,\varepsilon/2,a]}; \sigma_l^\varepsilon, q_l^\varepsilon \right).
\end{equation}
We further let
\begin{equation}\label{eq:p2d1}
\left( \mathbf{C}_{[w_0,r_2,a]}\backslash \overline{\mathbf{C}}_{[w_0,r_1,a]}; \widetilde\sigma^\varepsilon, \widetilde q^\varepsilon\right)=\mathscr{F}_*\left( \mathbf{C}_{[w_0,r_2,a]}\backslash \overline{\mathbf{C}}_{[w_0,\varepsilon,a]}; I, 1\right ).
\end{equation}

\begin{defin}\label{def:51}
Let $\widetilde{\mathcal{C}}^\varepsilon_{\mathbf{C}}=(\Sigma_1,\Sigma_2,\Sigma_3,s,\widetilde{\sigma}^{\varepsilon},\widetilde{q}^{\varepsilon},h,H)$ be an admissible scattering configuration satisfying the following.
\begin{enumerate}[i)]
\item $\Sigma_1,\Sigma_2,\Sigma_3$ are all contained and $h, H$ are both supported in $\mathbf{C}_{[w_0,r_1/2,a]}$.

\item In the regions $\mathbf{C}_{[w_0,r_1,a]} \backslash\overline{\mathbf{C}}_{[w_0,r_1/2,a]}$ and $ \mathbf{C}_{[w_0,r_2,a]}\backslash \overline{\mathbf{C}}_{[w_0,r_1,a]}$, $(\widetilde\sigma^\varepsilon, \widetilde q^\varepsilon)$ are, respectively, given by \eqref{eq:lossy c} and \eqref{eq:p2d1}. Moreover, in $\mathbb{R}^2\backslash\overline{\mathbf{C}}_{[w_0,r_2,a]}$, $\widetilde\sigma^\varepsilon=I$ and $\widetilde q^\varepsilon=1$.
\item If $H\equiv 0$, then it is required that
\begin{equation}\label{eq:cond2d1}
\Im\widetilde{q}^\varepsilon(x)\geq \lambda_0\quad \mbox{for a.e. $x\in supp(h)$},
\end{equation}
where $\lambda_0$ is a positive constant independent of $\varepsilon$.

\item If both $H$ and $h$ are not identically vanishing, then it is required that
\begin{equation}\label{eq:cond2d2}
\lambda_0\leq \Im \widetilde{q}^\varepsilon(x), \Re\widetilde{q}^\varepsilon(x)\leq \Lambda_0\quad \mbox{for a.e. $x\in\mathbf{C}_{[w_0,r_1/2,a]}$},
\end{equation}
and
\begin{equation}\label{eq:cond2d3}
\widetilde{\sigma}^\varepsilon(x)\xi\cdot\xi\geq \lambda_0 \|\xi\|^2\quad\mbox{for a.e. $x\in supp(H)$},
\end{equation}
where $\lambda_0$ and $\Lambda_0$ are positive constants independent of $\varepsilon$.
\end{enumerate}
\end{defin}

Then, we have

\begin{prop}\label{prop:c}
Let $\widetilde{\mathcal{C}}_{\mathbf{C}}^\varepsilon$ be as in Definition~\textnormal{\ref{def:51}}, and let $\widetilde u_\infty^\varepsilon(\hat x, d):=u_\infty(\hat x, d; \widetilde{\mathcal{C}}_{\mathbf{C}}^\varepsilon)$ be the scattering amplitude corresponding to the scattering configuration $\widetilde{\mathcal{C}}_{\mathbf{C}}^\varepsilon$. Let $r_l\sim 1$, $l=1,2$ and $a\sim 1$ in $\mathbf{C}_{[w_0,a,r_l]}$ and let $\mathcal{N}_\tau$ be given in \eqref{eq:unit sphere set2}. Then there exists $\varepsilon_0>0$ and a function $\omega:(0,\varepsilon_0]\rightarrow (0,+\infty]$ with $\lim_{s\rightarrow 0^+}\omega(s)=0$ such that for any $\varepsilon<\varepsilon_0$
\begin{equation}\label{eq:partial c1}
\|\widetilde{u}_\infty^\varepsilon(\cdot,d)\|_{L^\infty(\mathbb{S}^1)}\leq \omega(\varepsilon)+C\tau\quad \mbox{for $d\in \mathcal{N}_\tau$},
\end{equation}
and
\begin{equation}\label{eq:partial c2}
\|\widetilde{u}_\infty^\varepsilon(\cdot,d)\|_{L^\infty(\mathcal{N}_\tau)}\leq \omega(\varepsilon)+C\tau\quad
\mbox{for $d\in \mathbb{S}^1$},
\end{equation}
where $C$ is the constant from Proposition~\textnormal{\ref{prop:partial virtual}} and $\omega$ is independent of $(\Sigma_1,\Sigma_2,\Sigma_3, s)$.
Moreover, if $h\equiv 0$ and $H\equiv 0$, then $\omega$ is also independent of $(\mathbf{C}_{[w_0, r_1/2,a]}; \widetilde\sigma^\varepsilon, \widetilde q^\varepsilon)$\textnormal{;} and if $H\equiv 0$, then $\omega$ is independent of $(\mathbf{C}_{[w_0, r_1/2,a]}; \widetilde\sigma^\varepsilon, \Re\widetilde q^\varepsilon, \Im\widetilde q^\varepsilon|_{\{h=0\}})$\textnormal{;} and if $h\not\equiv 0$ and $H\not\equiv 0$, then $\omega$ is independent of $\widetilde{\sigma}^\varepsilon|_{\{H=0\}}$.
\end{prop}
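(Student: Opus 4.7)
The plan is to reduce Proposition~\ref{prop:c} to the combination of Theorem~\ref{thm:vmain1} (far-field convergence in the virtual space) and Proposition~\ref{prop:partial virtual} (the screen bound for a sound-hard flat obstacle). The bi-Lipschitz, orientation-preserving map $\mathscr{F}$ defined by \eqref{eq:transff}--\eqref{eq:transFs}, extended by the identity outside $\mathbf{C}_{[w_0,r_2,a]}$, satisfies $\mathscr{F}|_{\partial \mathbf{C}_{[w_0,r_2,a]}}=\text{Identity}$. By \eqref{eq:equal} one has $\widetilde u_\infty^\varepsilon(\hat x,d)=u_\infty(\hat x,d;\mathcal{C}^\varepsilon)$, where $\mathcal{C}^\varepsilon:=\mathscr{F}^{-1}_\ast(\widetilde{\mathcal{C}}^\varepsilon_{\mathbf{C}})$. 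Hence it suffices to bound the virtual scattering amplitude.

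I would take the limit scatterer to be $K_0:=[-a,a]\times\{0\}$, which is both convex compact and a Lipschitz scatterer in $\mathbb{R}^2$, and I would pick a Lipschitz weight $\tilde d$ whose sublevel sets $\{\tilde d\le \varepsilon\}$ reproduce $\mathbf{C}_{[w_0,\varepsilon,a]}$; this is possible because $\mathbf{C}_{[w_0,\varepsilon,a]}$ is, by construction, an $l^p$-type $\varepsilon$-neighbourhood of $K_0$, and the two-sided bound $ad(x)\le\tilde d(x)\le bd(x)$ is obvious from the equivalence of norms on $\mathbb{R}^2$. I then need to verify that $\mathcal{C}^\varepsilon$ is an admissible configuration satisfying Assumption~\ref{assumption1} (or Assumption~\ref{assumption2} when $H\equiv 0$). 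The scatterers $K_l^\varepsilon=\mathscr{F}^{-1}(\Sigma_l)$ lie inside $\mathscr{F}^{-1}(\mathbf{C}_{[w_0,r_1/2,a]})=\mathbf{C}_{[w_0,\varepsilon/2,a]}=K_{\varepsilon/2}$ by hypothesis (i) of Definition~\ref{def:51}, so $K^\varepsilon\subset K_{\varepsilon/2}$ as required. For the coefficients in $K_\varepsilon\setminus K_{\varepsilon/2}$, the choice \eqref{eq:fsh layer} gives $q_1^\varepsilon,q_2^\varepsilon\sim\varepsilon^{-1/2}$, so \eqref{eq:ll1} holds with $\omega_1(\varepsilon)=\varepsilon^{1/2}$; the eigenvalues of $\sigma_l^\varepsilon$ are bounded by $\Lambda_0$, which gives \eqref{firstcondsigma}; the anisotropic $\varepsilon^2$-factor sits on the $x_2$-direction in which $\nabla\tilde d$ essentially points on the middle strip of $K_\varepsilon\setminus K_{\varepsilon/2}$, which gives \eqref{secondcondsigmabis} (or at least the weaker \eqref{secondcondsigma}) with $E_2=\Lambda_0$; and \eqref{secondassumption} is immediate from $|q_l^\varepsilon|\sim \varepsilon^{-1/2}$ and $|K_\varepsilon\setminus K_{\varepsilon/2}|=O(\varepsilon)$. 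The source conditions \eqref{sourceassumption} or \eqref{sourceassumptionbis} follow from items (iii)--(iv) of Definition~\ref{def:51} together with the push-forward invariance of $\int(q_i)^{-1}|h|^2$ and $\int\sigma^{-1}H\cdot\overline H$ recalled at the end of Section~\ref{sect:general construction}.

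With these verifications in place, Theorem~\ref{thm:vmain1} applies and yields
$$\|u_\infty^\varepsilon-u_\infty\|_{L^\infty(\mathbb{S}^1)}\le C\omega(\varepsilon),$$
where $u_\infty$ is the far-field pattern of the sound-hard scattering problem \eqref{uscateq} with $K_0=[-a,a]\times\{0\}$, and the dependences of $\omega$ are exactly as claimed once one traces through which data enter Assumptions~\ref{assumption1}--\ref{assumption2}: when $h\equiv H\equiv 0$, nothing from inside $\Sigma$ is used at all; when only $H\equiv 0$, only $\Im \widetilde q_a$ on $\mathrm{supp}(h)$ is used through \eqref{sourceassumptionbis}; in general, only $\widetilde\sigma_a$ on $\mathrm{supp}(H)$ is used through \eqref{sourceassumption}. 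Proposition~\ref{prop:partial virtual} then gives $|u_\infty(\hat x,d)|\le C\tau$ for $d\in\mathcal{N}_\tau$ (and, by reciprocity, for $\hat x\in\mathcal{N}_\tau$), and the triangle inequality delivers \eqref{eq:partial c1}--\eqref{eq:partial c2}.

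The main obstacle will be the careful verification of \eqref{secondcondsigmabis} near the rounded endpoints of $K_\varepsilon\setminus K_{\varepsilon/2}$, where $\nabla\tilde d$ rotates away from $e_2$: one must check that in the caps $\mathbf{C}_L$ and $\mathbf{C}_R$ the chosen $\tilde d$ still aligns its small-direction with the $\varepsilon^2$-eigenvector of $\sigma_l^\varepsilon$, up to a bounded rotation, so that $\sigma_l^\varepsilon\nabla\tilde d\cdot\nabla\tilde d=O(\varepsilon^2)$ throughout the lossy shell (if not, one falls back on the weaker integral condition \eqref{secondcondsigma}, which only needs the average to vanish). A secondary bookkeeping issue is keeping track of which data the modulus $\omega$ depends on through each application of Assumption~\ref{assumption1} or~\ref{assumption2} to produce the final independence statements of Proposition~\ref{prop:c}.
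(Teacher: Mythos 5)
Your overall strategy is exactly the paper's: pull the physical configuration $\widetilde{\mathcal{C}}_{\mathbf{C}}^\varepsilon$ back to the virtual space via $\mathscr{F}^{-1}$, identify $K_0=[-a,a]\times\{0\}$, invoke Theorem~\ref{thm:vmain1} to control $\|u_\infty^\varepsilon-u_\infty\|_{L^\infty(\mathbb{S}^1)}$, and then combine with the screen bound of Proposition~\ref{prop:partial virtual} and the triangle inequality. The paper's own proof is just one line pointing to Lemma~\ref{lem:trans acoustics}, Theorem~\ref{thm:vmain1}, Proposition~\ref{prop:partial virtual} and the pattern of Propositions~\ref{prop:full1}--\ref{prop:full2}; your more careful walk through the hypotheses of Assumptions~\ref{assumption1}/\ref{assumption2} is useful and in fact reveals the subtlety you flagged.

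However, the fallback you propose for the caps does not work as stated. In $\mathbf{C}_L$ and $\mathbf{C}_R$, $\nabla\tilde{d}$ rotates and has a component along $e_1$ of order one on an angular sector of fixed measure; with $\sigma_l^\varepsilon=\mathrm{diag}(c_1,c_2\varepsilon^2)$ in \emph{Cartesian} coordinates as in \eqref{eq:fsh layer}, one then has $\sigma_l^\varepsilon\nabla\tilde{d}\cdot\nabla\tilde{d}\geq\lambda_0(\partial_1\tilde{d})^2\gtrsim 1$ there. Since the area of the cap portion of $K_\varepsilon\setminus K_{\varepsilon/2}$ is $\Theta(\varepsilon^2)$, this gives $\frac{1}{\varepsilon^2}\int_{\text{caps}}\sigma_l^\varepsilon\nabla\tilde{d}\cdot\nabla\tilde{d}=\Theta(1)$, which is bounded but does \emph{not} tend to zero. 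So the weaker integral condition \eqref{secondcondsigma} fails just as \eqref{secondcondsigmabis} does (and note that \eqref{secondcondsigma} asks for the integral divided by $\varepsilon^2$ to vanish, not merely the average of the integrand). The correct repair, consistent with the flexibility allowed in Remark~\ref{rem:52}, is to modify $\sigma_l^\varepsilon$ in the caps so that its $\varepsilon^2$-eigendirection is aligned with $\nabla\tilde{d}$ throughout the lossy shell (equivalently, replace the Cartesian frame by the frame adapted to the level sets of $\tilde{d}$), or simply take the isotropic $\sigma_l^\varepsilon=c_1(x)\varepsilon^2 I$ in the caps; either choice makes $\sigma_l^\varepsilon\nabla\tilde{d}\cdot\nabla\tilde{d}=O(\varepsilon^2)$ pointwise, so \eqref{secondcondsigmabis} holds with $E_2\sim\Lambda_0$. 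Your remaining bookkeeping of which data $\omega$ depends on through Assumptions~\ref{assumption1}--\ref{assumption2} is correct in spirit and matches the independence claims in the statement.
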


\begin{proof}
Let $\mathcal{C}^\varepsilon_{\mathbf{C}}=(K_1^{\varepsilon},K_2^{\varepsilon},K_3^{\varepsilon},s^{\varepsilon},\sigma^{\varepsilon},q^{\varepsilon},h^{\varepsilon},H^{\varepsilon})$ be defined by
\[
\mathcal{C}^\varepsilon_{\mathbf{C}}:=(\mathscr{F}^{-1})_*\widetilde{\mathcal{C}}_{\mathbf{C}}^\varepsilon.
\]
That is, $\mathcal{C}^\varepsilon_{\mathbf{C}}$ is the virtual configuration in the virtual space of the physical configuration $\widetilde{\mathcal{C}}_{\mathbf{C}}^\varepsilon$ in the physical space. Then, by Lemma~\ref{lem:trans acoustics}, we know $u_\infty(\hat x, d; \widetilde{\mathcal{C}}_{\mathbf{C}}^\varepsilon)=u_\infty(\hat x, d; {\mathcal{C}}_{\mathbf{C}}^\varepsilon)$. Now, the proposition can be proved by combining Theorem~\ref{thm:vmain1} and Proposition~\ref{prop:partial virtual}, in a completely similar manner as for the proofs of Propositions~\ref{prop:full1} and \ref{prop:full2}.
\end{proof}

\begin{rem}\label{rem:51}
By Proposition~\ref{prop:c}, we see that if $\tau$ and $\varepsilon$ are chosen to be small, the construction corresponding to the scattering configuration $\widetilde{\mathbf{C}}^\varepsilon_{\mathbf{C}}$ yields a regularized partial cloak with a limited observation aperture $\mathcal{N}_\tau$ if the impinging angle is from $\mathbb{S}^1$, and also a regularized partial cloak with a limited impinging aperture $\mathcal{N}_\tau$ and if the observation is from $\mathbb{S}^1$. Moreover, in the cloaked region $\mathbf{C}_{[w_0,r_1/2,a]}$, if no source/sink $(h,H)$ is presented, the passive medium inside could be arbitrary (but regular); and if a source/sink term $h$ is presented, one only need impose the generic condition that at the place where the source/sink is located, the medium is absorbing.
\end{rem}

\begin{rem}\label{rem:52}
The lossy layer \eqref{eq:fsh layer}--\eqref{eq:lossy c} is only a very particular choice of illustrating our general theory. Indeed, one can devise more general lossy layers as long as Assumption~\ref{assumption1}, b) is satisfied. For example, one may choose the lossy layer in \eqref{eq:fsh layer} to be $\sigma_l^\varepsilon=\varepsilon^2 I$ and $q_l^\varepsilon=(1+i)\varepsilon^{-1/2}$. Moreover, with such a lossy layer, one could set the parameter $a$ in defining $\mathbf{C}_{[w_0,r,a]}$ to be $\varepsilon$. In doing this, in the virtual space, $\mathbf{C}_{[w_0,\varepsilon,\varepsilon]}$ is `uniformly' small and hence the construction $\widetilde{\mathcal{C}}^\varepsilon_{\mathbf{C}}$ would yield a full regularized cloak. Furthermore, we would like to note that by adjusting the parameter $a$, one can achieve a customized partial cloak. That is, for any given aperture $\mathcal{N}_\tau$ and any small $\delta>0$, one can choose a sufficiently small $\varepsilon$ and $a\in (\varepsilon, 1)$ such that $\|u^\infty(\cdot, d; \widetilde{\mathcal{C}}^\varepsilon_{\mathbf{C}})\|_{L^\infty(\mathbb{S}^1)}\leq \delta$ for $d\in \mathcal{N}_\tau$. This can be straightforwardly shown by combining Theorem~\ref{thm:vmain1} and Proposition~\ref{prop:partial virtual}. However, there is one point that one needs to take care of in \eqref{peq:1}. In \eqref{peq:1}, the constant $C$ in the estimate is dependent on $a$. However, heuristically speaking, $C$ could be independent of $a$ for $a$ from a certain range. If this is the case, the construction of the customized partial cloak holds, and we shall not explore further in this aspect in the current article.
\end{rem}

For the three-dimensional ABC geometry $\mathbf{D}$, in a similar manner, one lets
\begin{equation}\label{eq:transff d}
\mathscr{G}(x)=\begin{cases}
\text{Identity} & \mbox{for $x\in \mathbb{R}^3\backslash \overline{\mathbf{D}}_{[w_0, r_2,a]}$},\\
\mathscr{G}_1(x)\qquad &\mbox{for\ $x\in \mathbf{D}_{[w_0,r_2,a]}\backslash \overline{\mathbf{D}}_{[w,\varepsilon,a]}$},\\
\mathscr{G}_{s}(x)\qquad & \mbox{for\
\ensuremath{x\in\mathbf{D}_{[w_{0},\varepsilon,a]}=\mathbf{D}_{L}\cup\mathbf{D}_{M}\cup\mathbf{D}_{R}}}.
\end{cases}
\end{equation}
where
\begin{equation}
\mathscr{G}_{s}\,:\, x=\left(\begin{array}{c}
x_{1}\\
x_{2}\\
x_{3}
\end{array}\right)\mapsto\begin{cases}
\left(\begin{array}{c}
\frac{r_{1}}{\varepsilon}\left(x_{1}+a\right)-a\\
\frac{r_{1}}{\varepsilon}x_{2} \\
\frac{r_{1}}{\varepsilon}x_{3}
\end{array}\right)\qquad & \mbox{for\ \ensuremath{x\in\mathbf{D}_{L}}},\\
\left(\begin{array}{c}
x_{1}\\
\frac{r_{1}}{\varepsilon}x_{2}\\
\frac{r_{1}}{\varepsilon}x_{3}
\end{array}\right) & \mbox{for\ \ensuremath{x\in\mathbf{D}_{M}}},\\
\left(\begin{array}{c}
\frac{r_{1}}{\varepsilon}\left(x_{1}-a\right)+a\\
\frac{r_{1}}{\varepsilon}x_{2}\\
\frac{r_{1}}{\varepsilon}x_{3}
\end{array}\right) & \mbox{for\ \ensuremath{x\in\mathbf{D}_{R}}}.
\end{cases}\label{eq:transGs1}
\end{equation}
Then $\mathscr{G}: \mathbf{D}_{[w,r_2,a]}\rightarrow \mathbf{D}_{[w,r_2,a]}$ is bi-Lipschitz and orientation-preserving and
$\mathscr{G}|_{\partial\mathbf{D}_{[w,r_2,a]}}$ $=\mbox{Identity}.$ Next, we again introduce a lossy layer as follows
\begin{equation}\label{eq:fsh layer d}
\left( \mathbf{D}_{[w,\varepsilon,a]} \backslash\overline{\mathbf{D}}_{[w,\varepsilon/2,a]}; \sigma_l^\varepsilon, q_l^\varepsilon \right), \quad \sigma_l^\varepsilon=c_1 \varepsilon^{2}I, \ q_l^\varepsilon=(c_2+ic_3)\varepsilon^{-1/2},
\end{equation}
where $c_l, 1\leq l\leq 3$ are positive constants, and let
\begin{equation}\label{eq:lossy c d}
\left( \mathbf{D}_{[w,r_1,a]} \backslash\overline{\mathbf{D}}_{[w,r_1/2,a]}; \widetilde{\sigma}^\varepsilon, \widetilde{q}^\varepsilon \right)=\mathscr{G}_*\left( \mathbf{D}_{[w,\varepsilon,a]} \backslash\overline{\mathbf{D}}_{[w,\varepsilon/2,a]}; \sigma_l^\varepsilon, q_l^\varepsilon \right).
\end{equation}
We also let
\begin{equation}\label{eq:p2d1 d}
\left( \mathbf{D}_{[w,r_2,a]}\backslash \overline{\mathbf{D}}_{[w,r_1,a]}; \widetilde{\sigma}^\varepsilon, \widetilde{q}^\varepsilon\right)=\mathscr{G}_*\left( \mathbf{D}_{[w,r_2,a]}\backslash \mathbf{D}_{[w,\varepsilon,a]}; I, 1\right ),
\end{equation}

\begin{defin}\label{def:51 d}
Let $\widetilde{\mathcal{C}}^\varepsilon_{\mathbf{D}}=(\Sigma_1,\Sigma_2,\Sigma_3,s,\widetilde{\sigma}^{\varepsilon},\widetilde{q}^{\varepsilon},h,H)$ be an admissible scattering configuration satisfying the following.
\begin{enumerate}[i)]
\item $\Sigma_1,\Sigma_2,\Sigma_3$ are all contained and $h, H$ are both supported in $\mathbf{D}_{[w,r_1/2,a]}$.

\item In the regions $\mathbf{D}_{[w,r_1,a]} \backslash\overline{\mathbf{D}}_{[w,r_1/2,a]}$ and $ \mathbf{D}_{[w,r_2,a]}\backslash \overline{\mathbf{D}}_{[w,r_1,a]}$, $(\widetilde\sigma^\varepsilon, \widetilde q^\varepsilon)$ are, respectively, given by \eqref{eq:lossy c d} and \eqref{eq:p2d1 d}. Moreover, in $\mathbb{R}^3\backslash\overline{\mathbf{D}}_{[w,r_2,a]}$, $\widetilde\sigma^\varepsilon=I$ and $\widetilde q^\varepsilon=1$.
\item If $H\equiv 0$, then it is required that
\begin{equation}\label{eq:cond2d1 d}
\Im\widetilde{q}^\varepsilon(x)\geq \lambda_0\quad \mbox{for a.e. $x\in supp(h)$},
\end{equation}
where $\lambda_0$ is a positive constant independent of $\varepsilon$.

\item If both $H$ and $h$ are not identically vanishing, then it is required that
\begin{equation}\label{eq:cond2d2 d}
\lambda_0\leq \Im \widetilde{q}^\varepsilon(x), \Re\widetilde{q}^\varepsilon(x)\leq \Lambda_0\quad \mbox{for a.e. $x\in\mathbf{D}_{[w,r_1/2,a]}$},
\end{equation}
and
\begin{equation}\label{eq:cond2d3 d}
\widetilde{\sigma}^\varepsilon(x)\xi\cdot\xi\geq \lambda_0 \|\xi\|^2\quad\mbox{for a.e. $x\in supp(H)$},
\end{equation}
where $\lambda_0$ and $\Lambda_0$ are positive constants independent of $\varepsilon$.
\end{enumerate}
\end{defin}

Then, we have

\begin{prop}\label{prop:c d}
Let $\widetilde{\mathcal{C}}_{\mathbf{D}}^\varepsilon$ be as in Definition~\textnormal{\ref{def:51 d}}, and let $\widetilde u_\infty^\varepsilon(\hat x, d):=u_\infty(\hat x, d; \widetilde{\mathcal{C}}_{\mathbf{D}}^\varepsilon)$ be the scattering amplitude corresponding to the scattering configuration $\widetilde{\mathcal{C}}_{\mathbf{D}}^\varepsilon$. Let $r_l\sim 1$, $l=1,2$ and $a\sim 1$ in $\mathbf{D}_{[w,a,r_l]}$. Then there exists $\varepsilon_0>0$ and a function $\omega:(0,\varepsilon_0]\rightarrow (0,+\infty]$ with $\lim_{s\rightarrow 0^+}\omega(s)=0$ such that for any $\varepsilon<\varepsilon_0$
\begin{equation}\label{eq:partial c1 d}
\|\widetilde{u}_\infty^\varepsilon(\cdot,d)\|_{L^\infty(\mathbb{S}^2)}\leq \omega(\varepsilon)\quad \mbox{for $d\in\mathbb{S}^2$}.
\end{equation}
where $\omega$ is independent of $(\Sigma_1,\Sigma_2,\Sigma_3, s)$.
Moreover, if $h\equiv 0$ and $H\equiv 0$, then $\omega$ is also independent of $(\mathbf{D}_{[w, r_1/2,a]}; \widetilde\sigma^\varepsilon, \widetilde q^\varepsilon)$\textnormal{;} and if $H\equiv 0$, then $\omega$ is independent of $(\mathbf{D}_{[w, r_1/2,a]}; \widetilde\sigma^\varepsilon, \Re\widetilde q^\varepsilon, \Im\widetilde q^\varepsilon|_{\{h=0\}})$\textnormal{;} and if $h\not\equiv 0$ and $H\not\equiv 0$, then $\omega$ is independent of $\widetilde{\sigma}^\varepsilon|_{\{H=0\}}$.
\end{prop}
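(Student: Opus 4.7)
The plan is to treat Proposition~\ref{prop:c d} as the 3D analogue of Proposition~\ref{prop:c}, where the key geometric observation is that in $\mathbb{R}^{3}$ a line segment (unlike the planar screen $K_0$ of Proposition~\ref{prop:partial virtual}) has zero $W^{1,2}$-capacity, so the sound-hard limit problem \eqref{uscateq} has trivial scattered field. Thus the estimate is a \emph{full} cloaking bound and reduces directly to Theorem~\ref{thm:vmain1}, without invoking Proposition~\ref{prop:partial virtual}. I would follow the same scheme used in the proofs of Propositions~\ref{prop:full1}, \ref{prop:full2}, and \ref{prop:c}: pull back the physical configuration to the virtual space via $\mathscr{G}^{-1}$, verify the Assumptions of Section~\ref{stabsec}, apply Theorem~\ref{thm:vmain1}, and then transport the bound back to the physical space through \eqref{eq:equal}.

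Concretely, I would first set $\mathcal{C}^{\varepsilon}_{\mathbf{D}}:=(\mathscr{G}^{-1})_{*}\widetilde{\mathcal{C}}^{\varepsilon}_{\mathbf{D}}$ using the bi-Lipschitz orientation-preserving map $\mathscr{G}$ of \eqref{eq:transff d}; by Lemma~\ref{lem:trans acoustics} and \eqref{eq:equal} the far-field patterns agree, so it suffices to bound $u^{\varepsilon}_{\infty}$ in the virtual space. I would take $K_{0}:=\{(t,0,0):|t|\leq a\}$, which is a compact convex set; by the discussion after Assumption~\ref{assumption2} one obtains a Lipschitz distance function $\tilde{d}$ (taken, on each of the three pieces $\mathbf{D}_{L},\mathbf{D}_{M},\mathbf{D}_{R}$, as the $(w_{0},l^{p})$-distance to the segment with the $x_{1}$-coordinate suitably truncated to $(|x_{1}|-a)^{+}$) such that $\{\tilde{d}\leq\varepsilon\}$ coincides with $\mathbf{D}_{[w,\varepsilon,a]}$ up to bi-Lipschitz constants independent of $\varepsilon$, and all the structural hypotheses at the start of Section~\ref{stabsec} hold.

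Next I would check Assumption~\ref{assumption1}b) for the lossy layer \eqref{eq:fsh layer d}--\eqref{eq:lossy c d}. With $\sigma_{l}^{\varepsilon}=c_{1}\varepsilon^{2}I$ and $q_{l}^{\varepsilon}=(c_{2}+ic_{3})\varepsilon^{-1/2}$, the choice $\omega_{1}(s)=s^{1/2}$ yields \eqref{eq:ll1}; \eqref{firstcondsigma}--\eqref{secondcondsigmabis} are immediate from the $\varepsilon^{2}$-factor and the Lipschitz bound on $\nabla\tilde{d}$; and \eqref{secondassumption} holds because the volume of an $\varepsilon$-tube around a one-dimensional segment in $\mathbb{R}^{3}$ satisfies $|K_{\varepsilon}\setminus K_{\varepsilon/2}|=O(\varepsilon^{2})$, so $\int_{K_{\varepsilon}\setminus K_{\varepsilon/2}}|q^{\varepsilon}|=O(\varepsilon^{3/2})\to 0$. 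The source-term requirements transfer from the physical hypotheses \eqref{eq:cond2d1 d}--\eqref{eq:cond2d3 d} using the invariance identities $\int(q_{i})^{-1}|h|^{2}=\int(\widetilde{q}_{i})^{-1}|\widetilde{h}|^{2}$ and $\int\sigma^{-1}H\cdot\overline{H}=\int\widetilde{\sigma}^{-1}\widetilde{H}\cdot\overline{\widetilde{H}}$ recorded at the end of Section~\ref{sect:general construction}: Definition~\ref{def:51 d}(iii) gives \eqref{sourceassumptionbis} (so Assumption~\ref{assumption2} applies), while (iv) gives Assumption~\ref{assumption1}c).

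Finally, since $K_{0}$ is a compact line segment in $\mathbb{R}^{3}$ of zero capacity, $H^{1}_{\rm loc}(\mathbb{R}^{3}\setminus K_{0})=H^{1}_{\rm loc}(\mathbb{R}^{3})$ and the Neumann trace $\nabla u\cdot\nu=0$ on $\partial K_{0}$ is vacuous; the unique solution to \eqref{uscateq} is therefore $u=u^{i}$, so $u_{\infty}\equiv 0$. Theorem~\ref{thm:vmain1} then delivers $\|u^{\varepsilon}_{\infty}\|_{L^{\infty}(\mathbb{S}^{2})}\leq C\omega(\varepsilon)$ with $\omega\to 0$, and the announced dependences of $\omega$ on the various pieces of data follow from the corresponding dependences in Theorem~\ref{thm:vmain1} across the three source cases. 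The only step that is not entirely routine is the geometric one: checking that the concatenated ABC region $\mathbf{D}_{[w,\varepsilon,a]}$ is genuinely bi-Lipschitz equivalent (uniformly in $\varepsilon$) to a straight $\varepsilon$-tube around $K_{0}$, so that a single $\tilde{d}$ works simultaneously on $\mathbf{D}_{L}$, $\mathbf{D}_{M}$, $\mathbf{D}_{R}$; this is ensured because $\mathscr{G}_{s}$ in \eqref{eq:transGs1} scales only the transverse directions by $r_{1}/\varepsilon$ and is a pure translation in $x_{1}$ on $\mathbf{D}_{M}$, so the caps and the middle piece fit together with constants independent of $\varepsilon$.
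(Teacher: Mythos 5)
Your proposal is correct and takes essentially the same approach as the paper: pull the physical configuration back to the virtual space via $\mathscr{G}^{-1}$, identify $K_0$ as a compact convex line segment in $\mathbb{R}^3$ whose zero $W^{1,2}$-capacity forces the limiting sound-hard scattered field to vanish, and then invoke Theorem~\ref{thm:vmain1}. The paper's proof is just a brief sketch pointing to the zero-capacity observation and to Propositions~\ref{prop:c}, \ref{prop:full1}, \ref{prop:full2}; your write-up fills in the same details (choice of $\tilde d$ as the $l^p$-distance from the segment, verification of Assumption~\ref{assumption1}b) for the lossy layer with $\omega_1(s)=s^{1/2}$ and $|K_\varepsilon\setminus K_{\varepsilon/2}|=O(\varepsilon^2)$, transfer of the source conditions via the push-forward invariants) without deviating from the paper's route.
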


\begin{proof}
The proof can be obtained in a similar manner to that for Proposition~\ref{prop:c}. The major different point is that $\mathbf{D}_{[w,\varepsilon,\varepsilon]}$ in the virtual space degenerates to a line-segment as $\varepsilon\rightarrow 0^+$, which has zero capacity in $\mathbb{R}^3$. Hence, one would have a regularized full cloak for the construction corresponding to $\widetilde{\mathcal{C}}_{\mathbf{D}}^\varepsilon$; see also the proofs of Propositions~\ref{prop:full1} and \ref{prop:full2}.
\end{proof}

\begin{rem}\label{rem:53}
Similar to Remark~\ref{rem:52}, the lossy layer \eqref{eq:fsh layer d}--\eqref{eq:lossy c d} is only a very particular choice for illustrating our general theory. Indeed, one can also devise much more general lossy layers as long as Assumption~\ref{assumption1}, b) is satisfied. For example, one may choose an anisotropic lossy layer with variable coefficients as the one in \eqref{eq:fsh layer}. Moreover, we would like to remark that by adjusting the free parameter $a$ to be smaller in $\mathbf{D}_{[w,r,a]}$, one can improve the accuracy of approximation for the cloak construction corresponding to $\widetilde{\mathcal{C}}_{\mathbf{D}}^\varepsilon$.
\end{rem}

Finally, for the ABC geometry $\mathbf{E}$, one can introduce
\begin{equation}\label{eq:transff e}
\mathscr{H}(x)=\begin{cases}
\text{Identity}\qquad &\mbox{for\ $x\in\mathbb{R}^3\backslash\overline{\mathbf{E}}_{[w_0,r_2,a,b]}$},\\
\mathscr{H}_1(x)\qquad &\mbox{for\ $x\in \mathbf{E}_{[w_0,r_2,a,b]}\backslash \overline{\mathbf{E}}_{[w_0,\varepsilon,a,b]}$},\\
\mathscr{H}_{s}(x) \qquad &\mbox{for\ $x\in
\mathbf{E}_{[w_0,\varepsilon,a,b]}=\mathbf{E}_0\cup\left(\bigcup_{l=1}^4\mathbf{E}_l^{\pm}\right)$},
\end{cases}
\end{equation}
where
\begin{equation}
\mathscr{H}_{s}\,:\, x=\left(\begin{array}{c}
x_{1}\\
x_{2}\\
x_{3}
\end{array}\right)\mapsto\begin{cases}
\left(\begin{array}{c}
x_{1}\\
x_{2}\\
\frac{r_{1}}{\varepsilon}x_{3}
\end{array}\right) & \mbox{for\ \ensuremath{x\in\mathbf{E}_{0}}},\\
\left(\begin{array}{c}
\frac{r_{1}}{\varepsilon}\left(x_{1}\mp a\right)\pm a\\
x_{2} \\
\frac{r_{1}}{\varepsilon}x_{3}
\end{array}\right)\qquad & \mbox{for\ \ensuremath{x\in\mathbf{E}_{1}^{\pm}}},\\
\left(\begin{array}{c}
x_{1}\\
\frac{r_{1}}{\varepsilon}\left(x_{2}\mp b\right)\pm b \\
\frac{r_{1}}{\varepsilon}x_{3}
\end{array}\right)\qquad & \mbox{for\ \ensuremath{x\in\mathbf{E}_{2}^{\pm}}},\\
\left(\begin{array}{c}
\frac{r_{1}}{\varepsilon}\left(x_{1} - a\right) + a\\
\frac{r_{1}}{\varepsilon}\left(x_{2}\mp b\right)\pm b \\
\frac{r_{1}}{\varepsilon}x_{3}
\end{array}\right)\qquad & \mbox{for\ \ensuremath{x\in\mathbf{E}_{3}^{\pm}}},\\
\left(\begin{array}{c}
\frac{r_{1}}{\varepsilon}\left(x_{1} + a\right) - a\\
\frac{r_{1}}{\varepsilon}\left(x_{2}\mp b\right)\pm b \\
\frac{r_{1}}{\varepsilon}x_{3}
\end{array}\right)\qquad & \mbox{for\
\ensuremath{x\in\mathbf{E}_{4}^{\pm}}}.
\end{cases}\label{eq:transGs2}
\end{equation}
Let a particular lossy layer be chosen to be
\begin{equation}\label{eq:lossy c e}
\left( \mathbf{E}_{[w_0,r_1,a,b]} \backslash\overline{\mathbf{E}}_{[w_0,r_1/2,a,b]}; \widetilde{\sigma}^\varepsilon, \widetilde{q}^\varepsilon \right)=\mathscr{H}_*\left( \mathbf{E}_{[w_0,\varepsilon,a,b]} \backslash\overline{\mathbf{E}}_{[w_0,\varepsilon/2,a,b]}; \sigma_l^\varepsilon, q_l^\varepsilon \right),
\end{equation}
with
\begin{equation}\label{eq:fsh layer e}
\left( \mathbf{E}_{[w_0,\varepsilon,a,b]} \backslash\overline{\mathbf{E}}_{[w_0,\varepsilon/2,a,b]}; \sigma_l^\varepsilon, q_l^\varepsilon \right), \quad \sigma_l^\varepsilon=c_1 \varepsilon^{2}I, \ q_l^\varepsilon=(c_2+ic_3)\varepsilon^{-1/2},
\end{equation}
where $c_l, 1\leq l\leq 3$ are positive constants.
Set the cloaking medium to be
\begin{equation}\label{eq:p2d1 e}
\left( \mathbf{E}_{[w_0,r_2,a,b]}\backslash \overline{\mathbf{E}}_{[w_0,r_1,a,b]};\widetilde{\sigma}^\varepsilon, \widetilde{q}^\varepsilon \right)=\mathscr{H}_*\left( \mathbf{E}_{[w_0,r_2,a,b]}\backslash \mathbf{E}_{[w_0,\varepsilon,a,b]}; I, 1\right ).
\end{equation}

\begin{defin}\label{def:51 e}
Let $\widetilde{\mathcal{C}}^\varepsilon_{\mathbf{E}}=(\Sigma_1,\Sigma_2,\Sigma_3,s,\widetilde{\sigma}^{\varepsilon},\widetilde{q}^{\varepsilon},h,H)$ be an admissible scattering configuration satisfying the following.
\begin{enumerate}[i)]
\item $\Sigma_1,\Sigma_2,\Sigma_3$ are all contained and $h, H$ are both supported in $\mathbf{E}_{[w_0,r_1/2,a,b]}$.

\item In the regions $\mathbf{E}_{[w_0,r_1,a,b]} \backslash\overline{\mathbf{E}}_{[w_0,r_1/2,a,b]}$ and $ \mathbf{E}_{[w_0,r_2,a,b]}\backslash \overline{\mathbf{E}}_{[w_0,r_1,a,b]}$, $(\widetilde\sigma^\varepsilon, \widetilde q^\varepsilon)$ are, respectively, given by \eqref{eq:lossy c e} and \eqref{eq:p2d1 e}. Moreover, in $\mathbb{R}^3\backslash\overline{\mathbf{E}}_{[w_0,r_2,a,b]}$, $\widetilde\sigma^\varepsilon=I$ and $\widetilde q^\varepsilon=1$.
\item If $H\equiv 0$, then it is required that
\begin{equation}\label{eq:cond2d1 e}
\Im\widetilde{q}^\varepsilon(x)\geq \lambda_0\quad \mbox{for a.e. $x\in supp(h)$},
\end{equation}
where $\lambda_0$ is a positive constant independent of $\varepsilon$.

\item If both $H$ and $h$ are not identically vanishing, then it is required that
\begin{equation}\label{eq:cond2d2 e}
\lambda_0\leq \Im \widetilde{q}^\varepsilon(x), \Re\widetilde{q}^\varepsilon(x)\leq \Lambda_0\quad \mbox{for a.e. $x\in\mathbf{E}_{[w_0,r_1/2,a,b]}$},
\end{equation}
and
\begin{equation}\label{eq:cond2d3 e}
\widetilde{\sigma}^\varepsilon(x)\xi\cdot\xi\geq \lambda_0 \|\xi\|^2\quad\mbox{for a.e. $x\in supp(H)$},
\end{equation}
where $\lambda_0$ and $\Lambda_0$ are positive constants independent of $\varepsilon$.
\end{enumerate}
\end{defin}

Then, we have

\begin{prop}\label{prop:e}
Let $\widetilde{\mathcal{C}}_{\mathbf{E}}^\varepsilon$ be as in Definition~\textnormal{\ref{def:51 e}}, and let $\widetilde u_\infty^\varepsilon(\hat x, d):=u_\infty(\hat x, d; \widetilde{\mathcal{C}}_{\mathbf{E}}^\varepsilon)$ be the scattering amplitude corresponding to the scattering configuration $\widetilde{\mathcal{C}}_{\mathbf{E}}^\varepsilon$. Let $r_l\sim 1$, $l=1,2$ and $a, b\sim 1$ in $\mathbf{E}_{[w_0,,r_l,a,b]}$ and let $\mathcal{N}_\tau$ be given in \eqref{eq:unit sphere set2}. Then there exists $\varepsilon_0>0$ and a function $\omega:(0,\varepsilon_0]\rightarrow (0,+\infty]$ with $\lim_{s\rightarrow 0^+}\omega(s)=0$ such that for any $\varepsilon<\varepsilon_0$
\begin{equation}\label{eq:partial e1}
\|\widetilde{u}_\infty^\varepsilon(\cdot,d)\|_{L^\infty(\mathbb{S}^2)}\leq \omega(\varepsilon)+C\tau\quad \mbox{for $d\in\mathcal{N}_{\tau}$},
\end{equation}
and
\begin{equation}\label{eq:partial e2}
\|\widetilde{u}_\infty^\varepsilon(\cdot,d)\|_{L^\infty(\mathcal{N}_\tau)}\leq \omega(\varepsilon)+C\tau\quad \mbox{for $d\in\mathbb{S}^2$},
\end{equation}
where $C$ is the constant from Proposition~\textnormal{\ref{prop:partial virtual}} and $\omega$ is independent of $(\Sigma_1,\Sigma_2,\Sigma_3, s)$.
Moreover, if $h\equiv 0$ and $H\equiv 0$, then $\omega$ is also independent of $(\mathbf{E}_{[w_0, r_1/2,a,b]}; \widetilde\sigma^\varepsilon, \widetilde q^\varepsilon)$\textnormal{;} and if $H\equiv 0$, then $\omega$ is independent of $(\mathbf{E}_{[w_0, r_1/2,a,b]}; \widetilde\sigma^\varepsilon, \Re\widetilde q^\varepsilon, \Im\widetilde q^\varepsilon|_{\{h=0\}})$\textnormal{;} and if $h\not\equiv 0$ and $H\not\equiv 0$, then $\omega$ is independent of $\widetilde{\sigma}^\varepsilon|_{\{H=0\}}$.
\end{prop}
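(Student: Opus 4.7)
\proof The plan is to mirror the strategy of Propositions~\ref{prop:c} and \ref{prop:c d}: transfer the problem from the physical to the virtual space, verify the hypotheses of Theorem~\ref{thm:vmain1}, and then use Proposition~\ref{prop:partial virtual} on the planar screen to obtain the $O(\tau)$ bound on the apertures. Let us define the virtual configuration
\[
\mathcal{C}_{\mathbf{E}}^\varepsilon := (\mathscr{H}^{-1})_* \widetilde{\mathcal{C}}_{\mathbf{E}}^\varepsilon,
\]
so that the obstacles $K_l^\varepsilon = \mathscr{H}^{-1}(\Sigma_l)$ ($l=1,2,3$) are contained in $\mathbf{E}_{[w_0,\varepsilon/2,a,b]}$, the sources $(h^\varepsilon,H^\varepsilon)$ are supported there as well, and the push-forward relations \eqref{eq:lossy c e}--\eqref{eq:p2d1 e} invert to give $\sigma^\varepsilon = I$, $q^\varepsilon = 1$ outside $\mathbf{E}_{[w_0,\varepsilon,a,b]}$ and the prescribed $(\sigma_l^\varepsilon, q_l^\varepsilon)$ in the lossy shell. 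By Lemma~\ref{lem:trans acoustics} and \eqref{eq:equal},
\[
u_\infty(\hat x, d; \widetilde{\mathcal{C}}_{\mathbf{E}}^\varepsilon) = u_\infty(\hat x, d; \mathcal{C}_{\mathbf{E}}^\varepsilon)\quad \text{for every }\hat x, d\in\mathbb{S}^2.
\]

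Next, I would identify the limit scatterer. Since $\mathbf{E}_{[w_0,\varepsilon,a,b]}$ shrinks as $\varepsilon\to 0^+$ to the planar rectangle $K_0=\{-a\leq x_1\leq a\}\times\{-b\leq x_2\leq b\}\times\{x_3=0\}$, which is compact and convex in $\mathbb{R}^3$, we are in the framework of Section~\ref{stabsec}: we may take $\tilde d$ to be any norm-based distance compatible with the family $\{\mathbf{E}_{[w_0,\varepsilon,a,b]}\}$ (e.g.\ so that the $\varepsilon$-sub-level set equals $\mathbf{E}_{[w_0,\varepsilon,a,b]}$ up to bi-Lipschitz equivalence). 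Then $K_{\varepsilon}=\mathbf{E}_{[w_0,\varepsilon,a,b]}$ and $K_{\varepsilon/2}=\mathbf{E}_{[w_0,\varepsilon/2,a,b]}$ are regular scatterers and the complement is connected. The lossy layer \eqref{eq:fsh layer e} has $\sigma_l^\varepsilon=c_1\varepsilon^2 I$ and $q_l^\varepsilon=(c_2+ic_3)\varepsilon^{-1/2}$, so choosing $\omega_1(\varepsilon)=\varepsilon^{1/2}$ one verifies directly \eqref{eq:ll1}, \eqref{firstcondsigma}, \eqref{secondcondsigmabis}, and \eqref{secondassumption}; hence Assumption~\ref{assumption1}\,b) holds with constants independent of $\varepsilon$. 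The boundary and source conditions in Definition~\ref{def:51 e}(iii)--(iv) then yield either Assumption~\ref{assumption1}\,c) (when $H\not\equiv 0$ and the generic bounds \eqref{eq:cond2d2 e}--\eqref{eq:cond2d3 e} are in force) or Assumption~\ref{assumption2}\,c) (when $H\equiv 0$ and only \eqref{eq:cond2d1 e} is imposed, which implies \eqref{sourceassumptionbis}).

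Having verified the assumptions, Theorem~\ref{thm:vmain1} supplies a modulus of continuity $\omega$, depending only on $K_0$, $\tilde d$, the lossy parameters and the constant $E_3$, such that
\[
\|u_\infty^\varepsilon(\cdot, d) - u_\infty^{(0)}(\cdot, d)\|_{L^\infty(\mathbb{S}^2)} \leq \omega(\varepsilon),
\]
where $u_\infty^{(0)}$ is the far-field pattern associated with the solution of \eqref{uscateq} on the sound-hard screen $K_0$. Since $K_0$ is exactly the planar rectangle of \eqref{eq:k0 3d}, Proposition~\ref{prop:partial virtual} applies to $u_\infty^{(0)}$ and yields $|u_\infty^{(0)}(\hat x,d)| \leq C\tau$ whenever $d\in\mathcal{N}_\tau$ (uniformly in $\hat x$) or $\hat x\in\mathcal{N}_\tau$ (uniformly in $d$). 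Combining these two estimates by the triangle inequality gives \eqref{eq:partial e1} and \eqref{eq:partial e2}. The dependence structure of $\omega$ (independence on the obstacles, and, when the relevant sources vanish, independence on the interior media) follows from the analogous independence in Theorem~\ref{thm:vmain1} as already exploited in the proofs of Propositions~\ref{prop:full1}--\ref{prop:full2}.

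The main technical hurdle is the verification of Assumption~\ref{assumption1}\,b) on the lossy shell: the shell $\mathbf{E}_{[w_0,\varepsilon,a,b]}\setminus\mathbf{E}_{[w_0,\varepsilon/2,a,b]}$ is not a level set of a single smooth distance but is built by concatenating nine differently-shaped pieces (the central slab $\mathbf{E}_0$, the four side caps $\mathbf{E}_{1,2}^\pm$, and the four corner caps $\mathbf{E}_{3,4}^\pm$). One must therefore exhibit a single Lipschitz $\tilde d$ that is bi-Lipschitz-equivalent to the Euclidean distance from $K_0$ and whose $\varepsilon$-sub-level set coincides with $\mathbf{E}_{[w_0,\varepsilon,a,b]}$ up to controlled error, and check that the scaled gradient estimate \eqref{secondcondsigmabis} survives the concatenation at the interfaces between the nine components. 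Once this piecewise-to-global check is done, the remaining work is routine algebra on the explicit $\varepsilon$-scalings of $\sigma_l^\varepsilon$ and $q_l^\varepsilon$.\cvd
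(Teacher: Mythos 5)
Your proposal is correct and mirrors the paper's own (terse) argument: push the physical configuration back to the virtual space via $\mathscr{H}^{-1}$, apply Theorem~\ref{thm:vmain1} with $K_0$ the planar rectangle of \eqref{eq:k0 3d}, and combine with Proposition~\ref{prop:partial virtual} and the triangle inequality, exactly as in the proofs of Propositions~\ref{prop:c} and~\ref{prop:full1}--\ref{prop:full2}. One small clarification on the ``technical hurdle'' you flag at the end: for $p=2$ the set $\mathbf{E}_{[w_0,\varepsilon,a,b]}$ is \emph{precisely} the closed Euclidean $\varepsilon$-neighborhood of the compact convex rectangle $K_0$, so $\tilde d=d$ works with exact (not merely bi-Lipschitz-approximate) equality of the level sets $K_{\varepsilon}=\mathbf{E}_{[w_0,\varepsilon,a,b]}$, which is what the framework of Section~\ref{stabsec} requires; for other $p$ one takes a suitable weighted $l^p$-type distance from $K_0$, and the nine-piece decomposition enters only in the concatenated blow-up maps $\mathscr{H}_1,\mathscr{H}_s$, not in the construction of $\tilde d$.
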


\begin{proof}
The proof follows in a similar manner to that for Proposition~\ref{prop:c}.
\end{proof}

\begin{rem}
Similar to Remark~\ref{rem:52}, the lossy layer \eqref{eq:lossy c e}--\eqref{eq:fsh layer e} is only a very particular choice for illustrating our general theory. One can also devise much more general lossy layers as long as Assumption~\ref{assumption1}, b) is satisfied and it could be anisotropic with variable coefficients as the one in \eqref{eq:fsh layer}. Also analogously to Remark~\ref{rem:52}, by adjusting the two parameters $a$ and $b$, one may have a customized partial cloak by the construction corresponding to $\widetilde{\mathcal{C}}_{\mathbf{E}}^\varepsilon$.
\end{rem}

\section{Numerical results and discussion}\label{sect:numerical}

In this section, we carry out systematic experiments based on the
discussions in Sections~\ref{sect:4} and
\ref{sect:ABC} to investigate the cloaking performance
with respect to the regularization parameter $\varepsilon$ in two and three
dimensions. Numerical simulations are carried out by employing the
C++/Matlab coupled programming and the pardiso solver. Finite
element method is used in a truncated domain coated by a perfectly
matched layer to evaluate the performance of the proposed cloaks.
The far-field pattern of the scattered field is calculated via the
Kirchhoff-Helmholtz formula by surface integral on a sphere, within
which the cloaking medium is compactly imbedded.

In the following experiments, we use the
${L^\infty(\mathbb{S}^{N-1})}$-norm of the far-field patterns
$\mathcal{A}_{\mathbf{T}}(\cdot, d):=u_\infty(\cdot,d;\widetilde{\mathcal{C}}^\varepsilon_{\mathbf{T}})$ ($\mathbf{T}=\mathbf{C}$,
$\mathbf{D}$, or $\mathbf{E}$) to evaluate the cloaking
performance of the specific cloaks. More precisely,  we compute the
module of the far field patterns on $100$ equidistant points along
the unit circle in two dimensions, or $100\times 100$ points on the
unit sphere by choosing uniformly distributed polar angles and
azimuth angles in three dimensions, respectively. The exact value of the norm of
$u_\infty(\cdot,d;\widetilde{\mathcal{C}}^\varepsilon_{\mathbf{T}})$
is then approximated by the maximum value of these modules, which is
adopted for the performance evaluation of cloaks.

\subsection{2D cloaks}

For acoustic scattering in two dimensions, let the incident wave be
a plane wave of wavelength $l_e = 2$. We choose $a=1\gg\varepsilon$ to
test partial cloaks and $a=\varepsilon$ to check full cloaks.

First of all, we study the scattering performance of Type
$\mathbf{C}$ cloaks. Let us visualize the effect of the $l^{1}$,
$l^{2}$ and $l^{\infty}$ cloaks of Type $\mathbf{C}$ by looking at
the instantaneous pressure. As shown by
Figure~\ref{fig:2d:cloak:instantaneous:pressure},  the total
acoustic pressure is bent and/or compressed in the cloaking medium.
There is clearly a thin surface layer within the lossy layer between the cloaked and cloaking regions
and prevents the incident wave from penetrating into the cloaked
region when the incident direction is parallel to the axial direction
of Type $\mathbf{C}$ cloaks. At the mean time, the total pressure
outside the cloaking media deviates only slightly from that of a
perfectly plane wave (except tiny numerical deviation due to the
finite spatial resolution in numerical simulations).
\begin{figure}[htbp]
\hfill{}\includegraphics[width=0.45\textwidth]{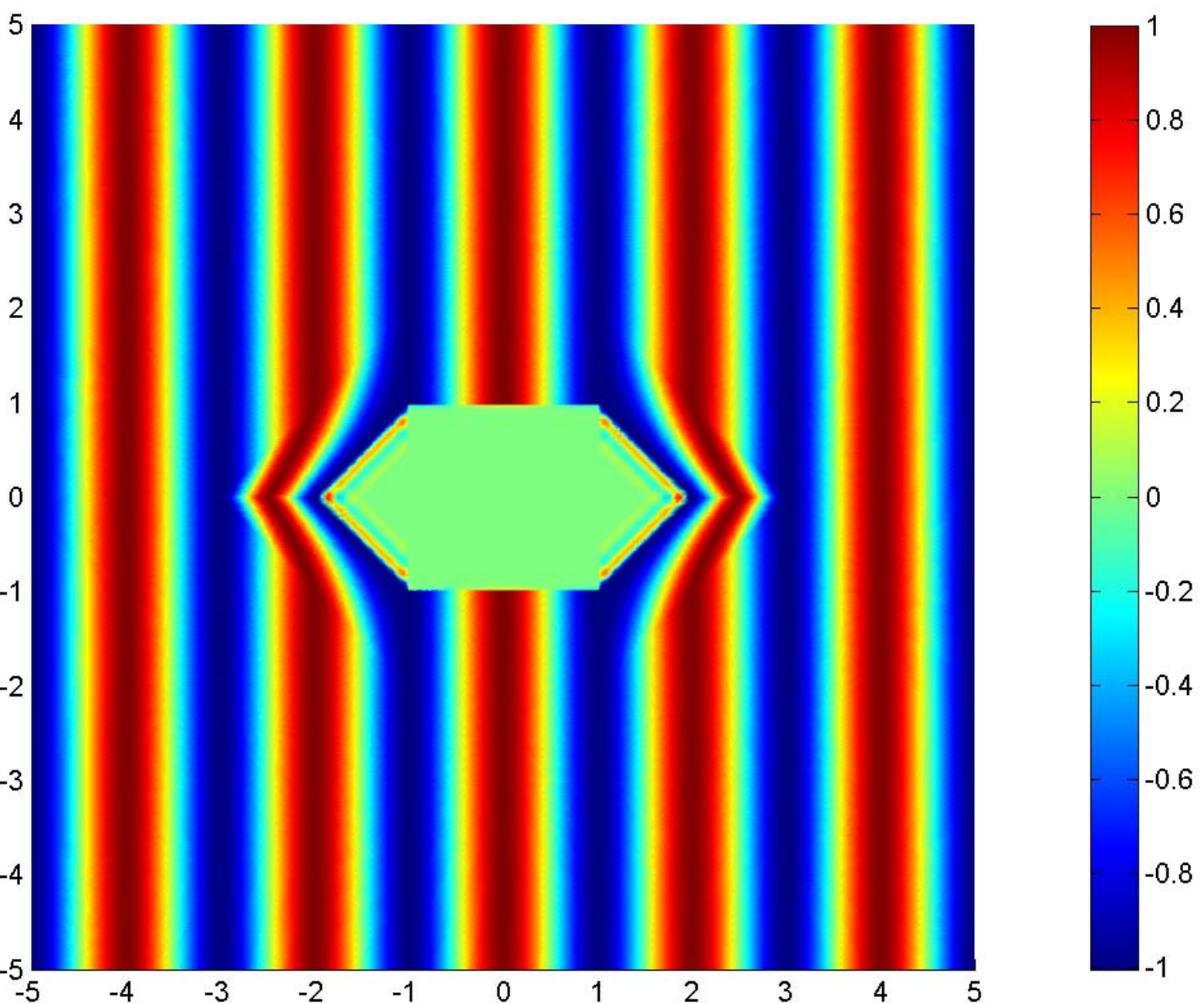}\hfill{}

\hfill{}(a)\hfill{}

\hfill{}\includegraphics[width=0.45\textwidth]{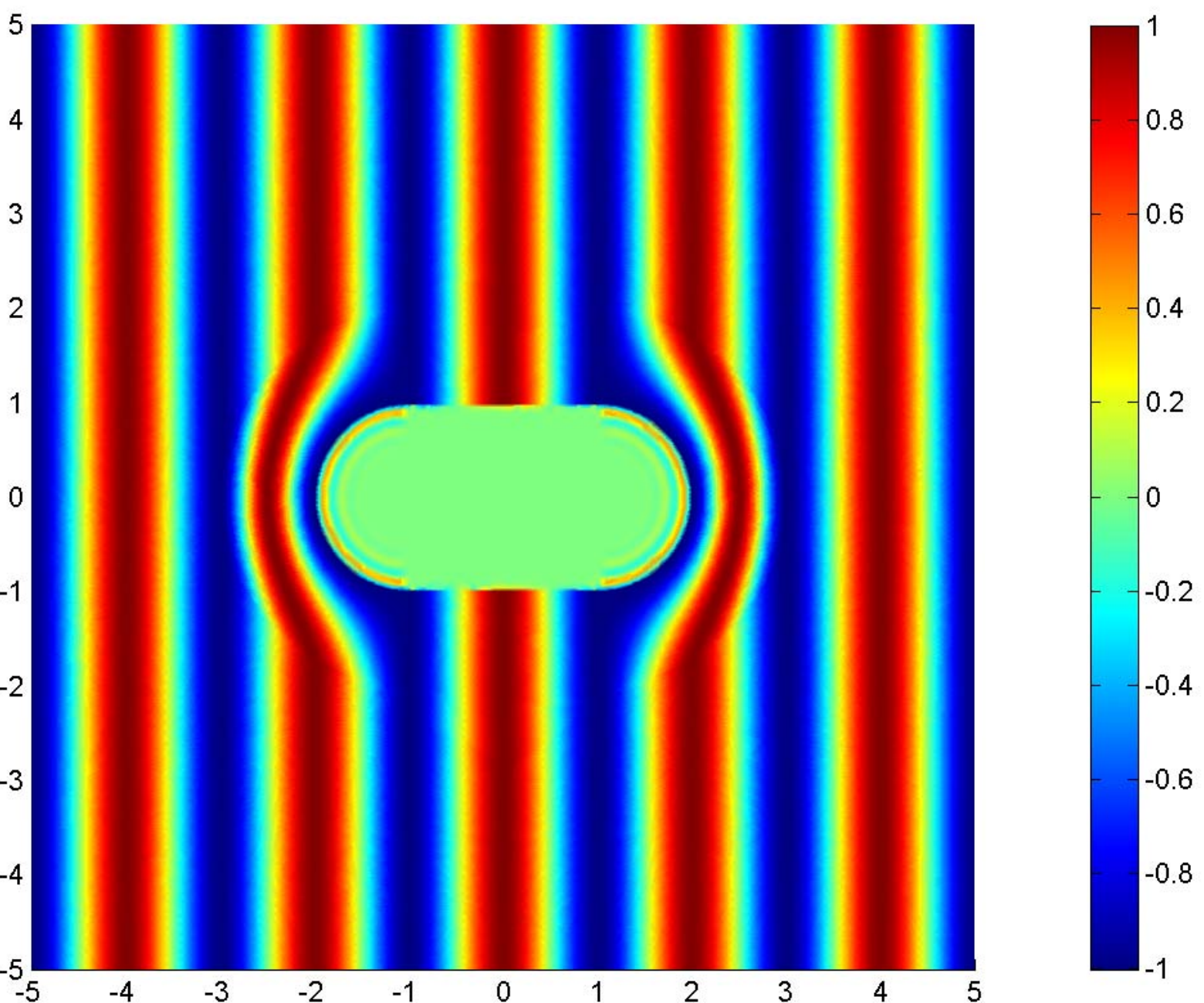}\hfill{}
\includegraphics[width=0.45\textwidth]{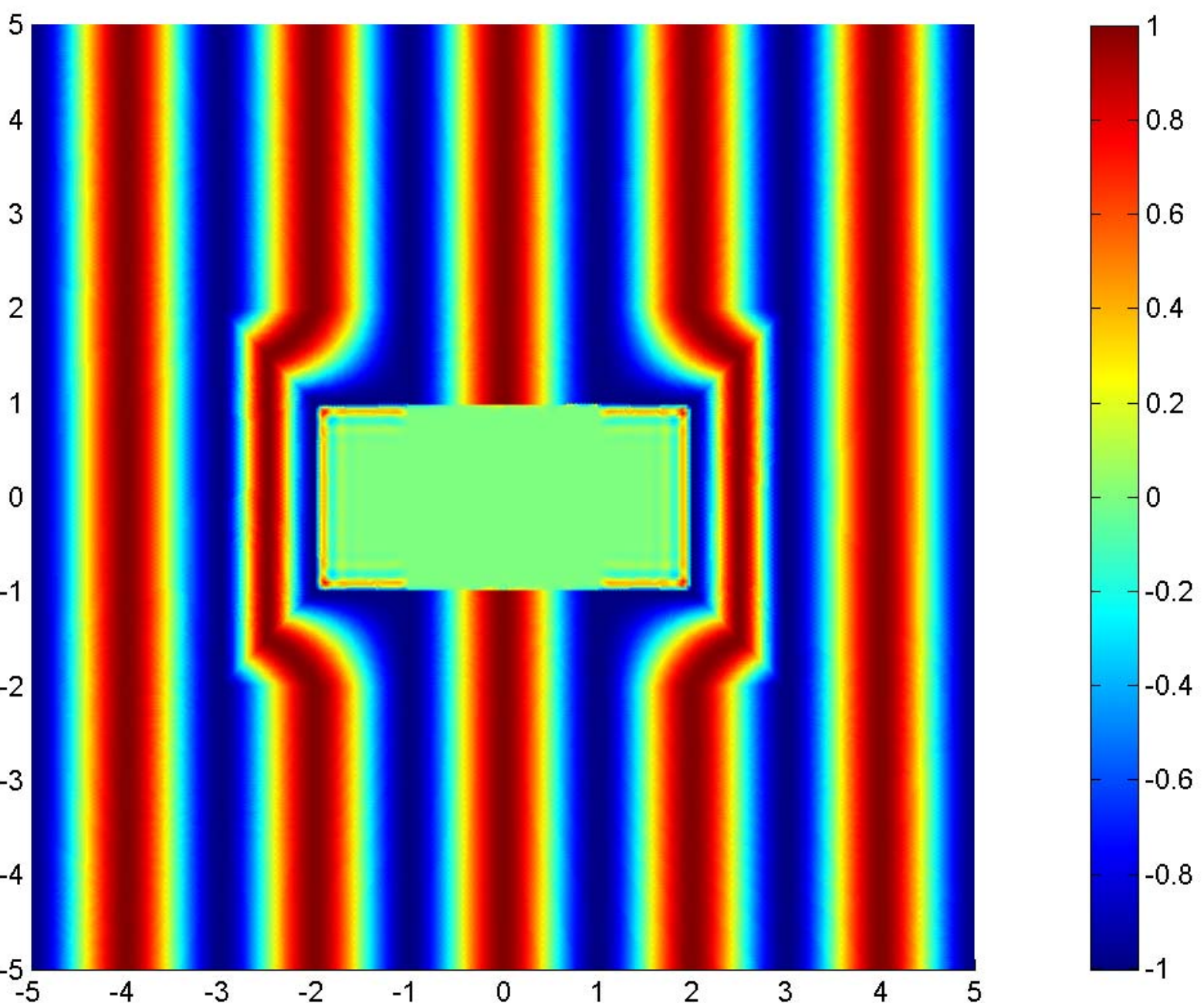}\hfill{}

\hfill{}~~~~~~~~~~(b)~~~~~~~~~~~~~~~~~~~~~~~~~~~~~~~~~~~~~\hfill{}(c)~~~~~~~~~~~~~~\hfill{}

\caption{\label{fig:2d:cloak:instantaneous:pressure} Real part of
instantaneous acoustic pressure distribution using two-dimensional
(a) $l^{1}$, (b) $l^{2}$  and (c) $l^{\infty}$  cloaks,
respectively. Incident plane wave from left to right.
Parameters: $\varepsilon=10^{-2}$, $r_{1}=1$, $r_{2}=2$,
$a=1$.  }
\end{figure}

Next, we test the performance of the 2D $l^2$ partial cloak in case of
tilted incident angles. From the instantaneous pressure in
Figure~\ref{fig:2d:partial:cloak}(a), (b) and (c), it can be
observed that there is increasingly more  pressure deviation of the
total field from that of a perfectly plane wave outside the cloak
when we increase the incident angle from $5$ and $10$ to $20$
degrees. When the incident angle is perpendicular to the axial
direction of the partial cloak, strong scattering amplitude occurs
and this suggests the failure at the extreme case, see
Figure~\ref{fig:2d:partial:cloak}(d).

By symmetry, the same observation can be explained from the other side of the partial cloak.
Figure~\ref{fig:2d:partial:cloak:far:field} shows the far field
pattern in decimal Bell {[}dB{]} with respect to the polar angle. It
can be seen that when the incident direction is along the ideal axis
(x-axis, namely 0 or 180 degrees in polar angle),
the far field is always below $-40$ dB%
\footnote{$-40$ dB amounts to $10^{-4}$ in magnitude in far field
patterns as opposed to the unit magnitude in the incident wave%
}, which is small and negligible in practice.
Figure~\ref{fig:2d:partial:cloak}(a) suggest that for a small tilted incident angle below
$5$ degrees, approximate partial cloak effects
can be achieved with the far-field pattern below $-20$ dB.

In addition, there does exist some angle $\tau\sim15$, $7$ and $3$
degrees corresponding to the tilted incident angles $5$, $10$ and
$20$, respectively, such that with such limited observation aperture perturbed off the
ideal axis, the far field patterns are still below $-40$ dB
(cf.~cusps near the $x$-axis within the $-40$ dB contour curve in
Figure~\ref{fig:2d:partial:cloak:far:field}). Hence \eqref{eq:partial c2} of
Proposition~\ref{prop:c} is confirmed numerically. That is, the
partial cloaking is still in effect in a limited observation aperture around the
ideal direction parallel to the cloak even for full impinging
aperture.

\begin{figure}[htbp]

\hfill{}\includegraphics[width=0.45\textwidth]{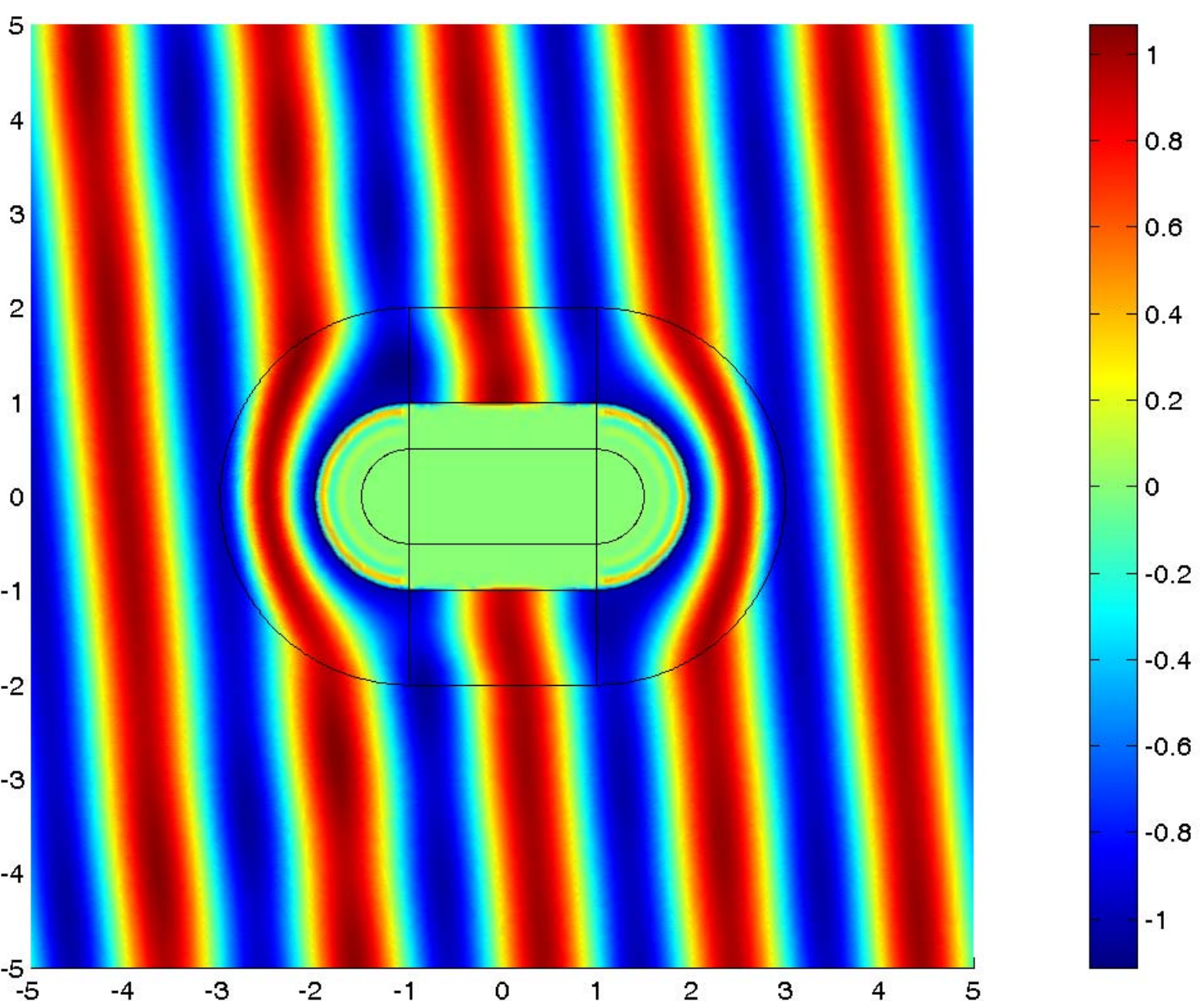}\hfill{}\includegraphics[width=0.45\textwidth]{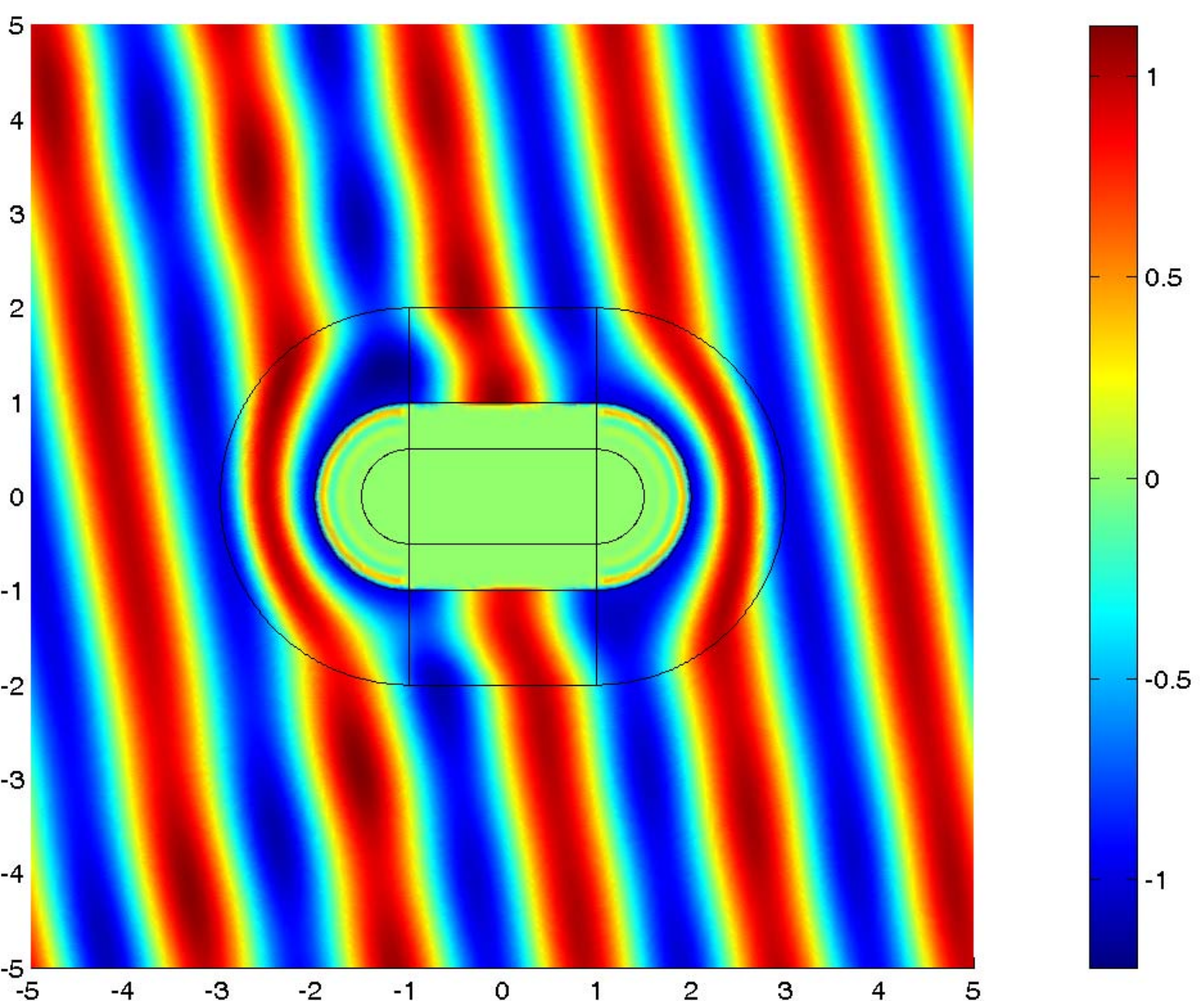}
\hfill{}

\hfill{}

\hfill{}\includegraphics[width=0.45\textwidth]{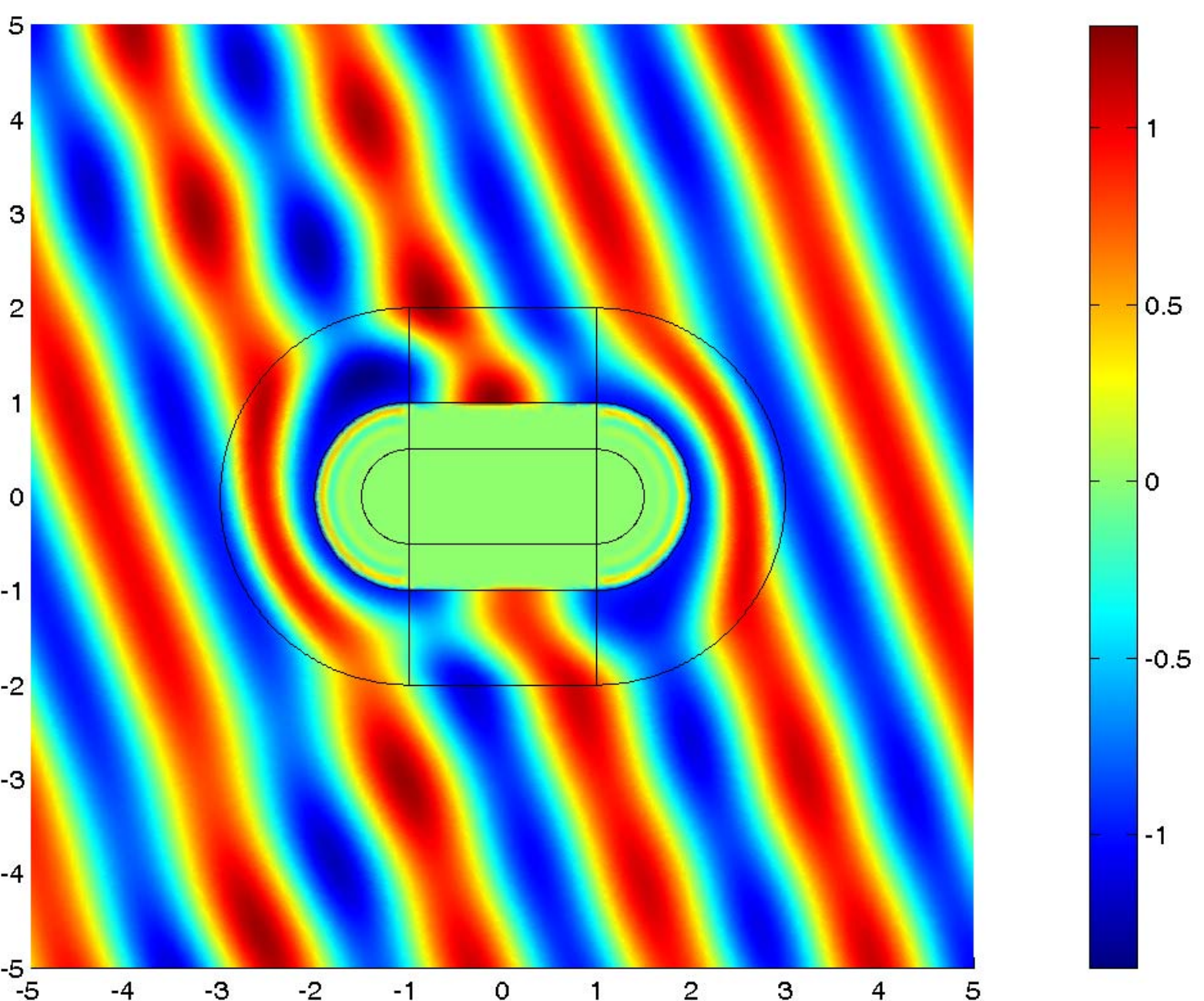}\hfill{}\includegraphics[width=0.45\textwidth]{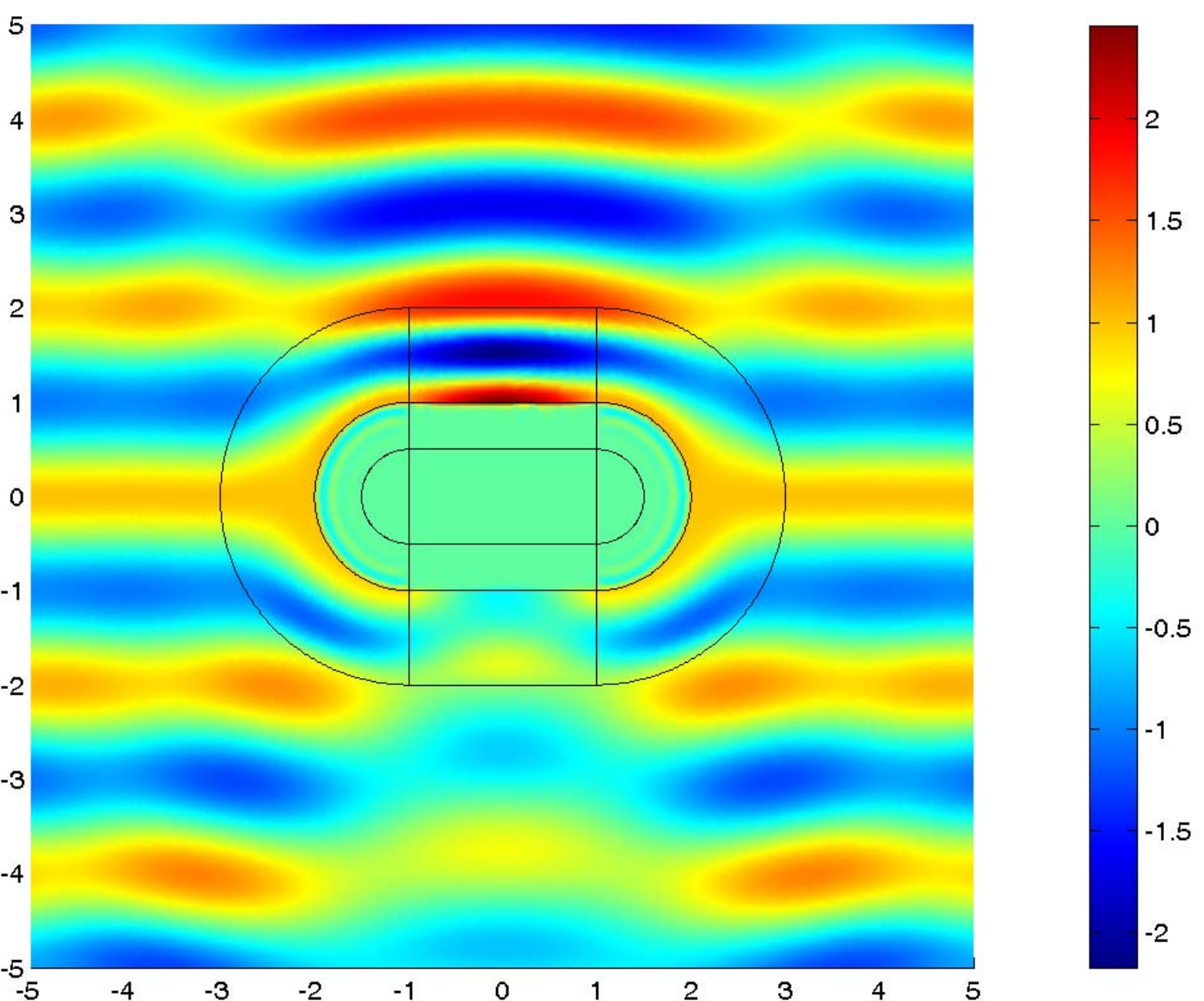}\hfill{}

\caption{\label{fig:2d:partial:cloak}(From left to right) Real part of
instantaneous
acoustic pressure distribution using two-dimensional
$l^{2}$ cloak with titled incident angles, 5, 10,  20 and 90
degrees, respectively. Parameters: $\varepsilon=10^{-2}$, $r_{1}=1$, $r_{2}=2$,
$a=1$.}
\end{figure}

\begin{figure}[htbp]
\hfill{}\includegraphics[width=0.8\textwidth]{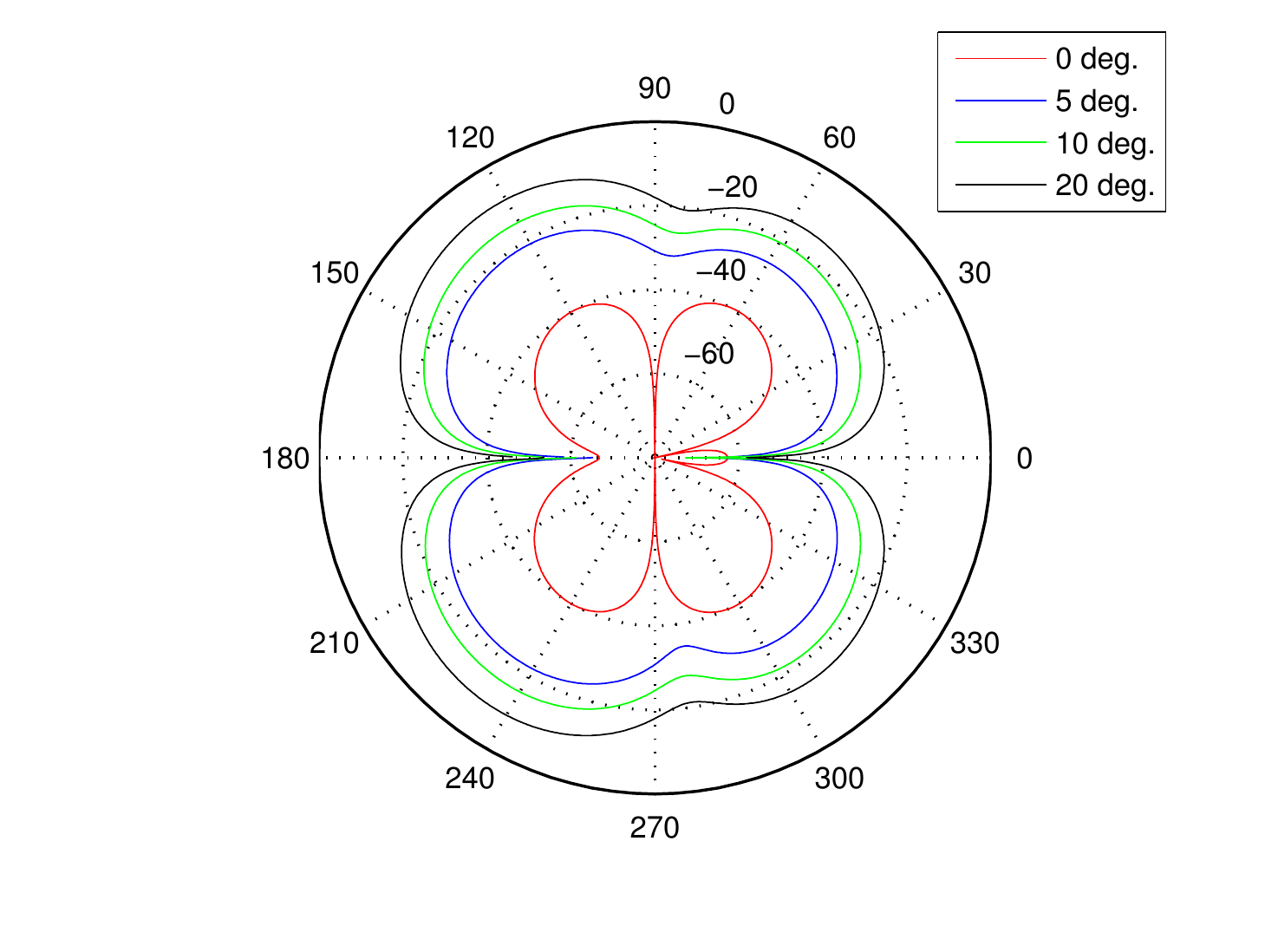}\hfill{}

\caption{\label{fig:2d:partial:cloak:far:field}Polar plot of far
field pattern as a function of polar angle using two-dimensional
$l^{2}$ cloak with tilted incident angles, 0, 5, 10 and  20 degrees,
respectively. }
\end{figure}

Finally, for the regularized full cloak case, we set $a=\varepsilon$ and
the incident angle even perpendicular to the axial direction of the
cloak and test the cloaking performance for the $l^2$ cloak of Type
$\mathbf{C}$. It can be observed from Figure~\ref{fig:2d:cloak:full}
that compared with the red reference line denoting the second order
decay rate, the far field pattern decays quadratically with
respect to $\varepsilon$. That is, the construction yields an approximate full cloak within $\varepsilon^2$-accuracy of
the ideal full cloak. This observation complies with Remark~\ref{rem:52}. Moreover, the second order rate of approximation complies with the estimate in \cite{LiuSun} for regularized full cloaks.
Similar observations are made for $l^{1}$ and $l^{\infty}$ cloaks of
Type $\mathbf{C}$.
\begin{figure}[htbp]
\hfill{}\includegraphics[width=0.6\textwidth]{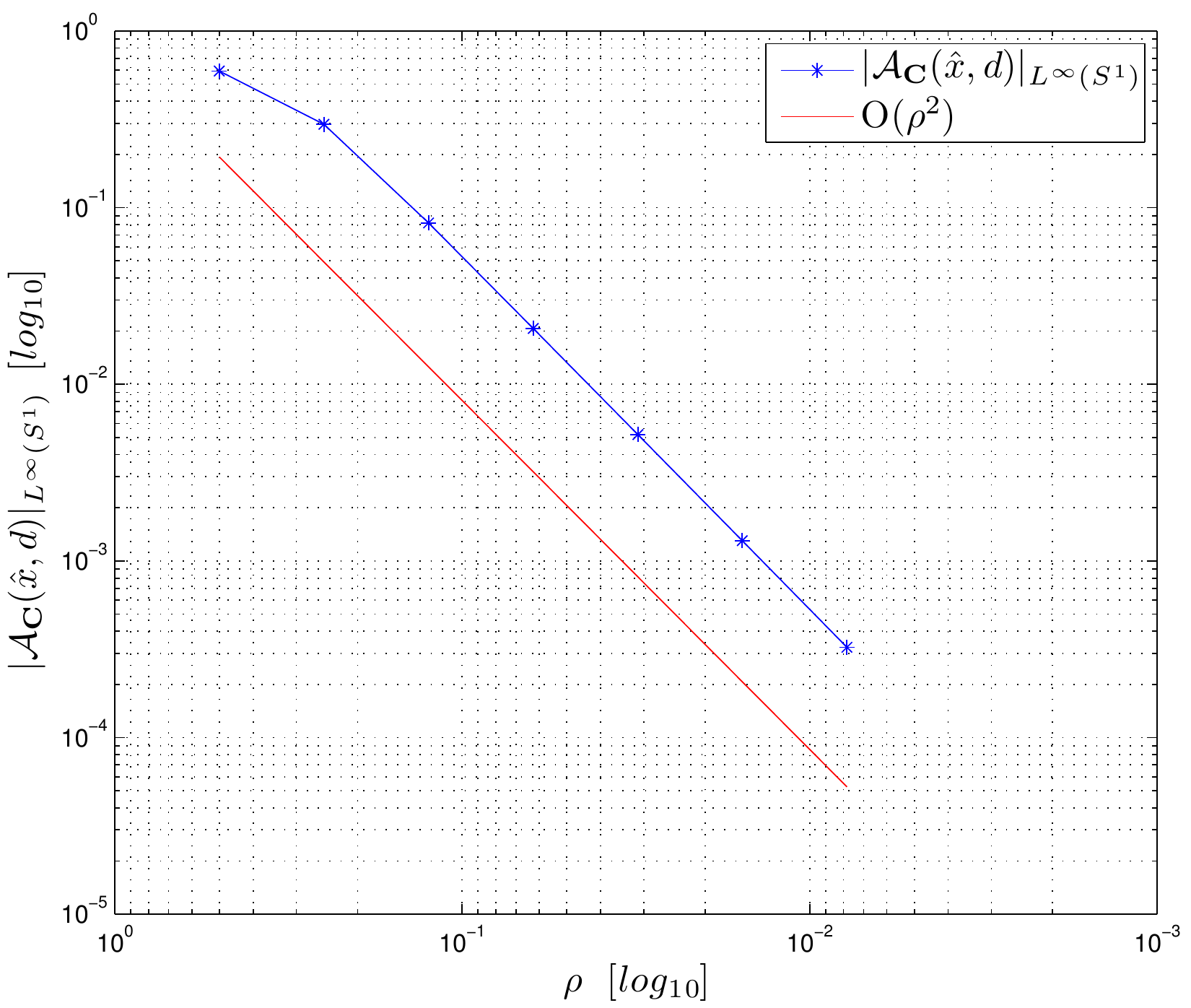}\hfill{}

\caption{\label{fig:2d:cloak:full}Convergence history of scattering
measurement data versus $\varepsilon$ for the 2D $l^{2}$ cloak when
$a=\varepsilon$.}
\end{figure}

\subsection{3D cloaks}

In the 3D cloaks, we fix the wavelength $l_e =3$ and the
regularization parameter $\varepsilon=10^{-2}$ in acoustic scattering and
use the spherical domain with radius $5$ centered at the origin as
the truncated domain for numerical simulations.

First, we check the cloaking performance of Type $\mathbf{D}$ cloaks
in terms of $a$. All the impinging direction of the incident wave is
perpendicular to the axial direction of the cloaks. It is noted that
we always adopt the 3D $l^2$ cloak and the right angle incidence for
tests. Similar observations are made for other combinations.

We show in Figure~\ref{fig:Type-D}(a) the plane impinging wave with
the incident direction along the $x$-axis. The resulting scattered
wave is depicted in Figure~\ref{fig:Type-D}(b), which nearly
vanishes outside the cloak. In Figure~\ref{fig:Type-D}(c), it can be
obviously seen that the total wave is bent and compressed within the
cloaking medium.

For a full 3D cloak with $a=\varepsilon$, Figure~\ref{fig:3d:cloak:full}
demonstrates the third order approximation of the far-field pattern with respect to
$\varepsilon$. That is, the construction yields an approximate full cloak within $\varepsilon^3$-accuracy of the ideal full cloak, which is consistent with the result in \cite{LiuSun} for regularized full cloaks obtained by blowing up a `uniformly' small region in the virtual space. As we increase $a$ to be $1$,  the far-field patter decays only
quadratically in terms of $\varepsilon$, see Figure~\ref{fig:3d:cloak:p2}. In other words, the construction by blowing up a `partially' small region in the virtual space will suffer a reduction in the rate of approximation. This observation complies with Remark~\ref{rem:53}.

\begin{figure}[htbp]
\hfill{}\includegraphics[width=0.48\textwidth]{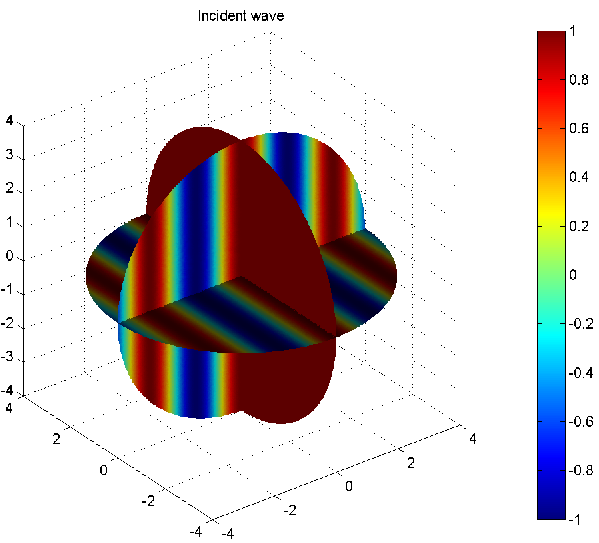}\hfill{}

\hfill{}(a)\hfill{}

\hfill{}\includegraphics[width=0.48\textwidth]{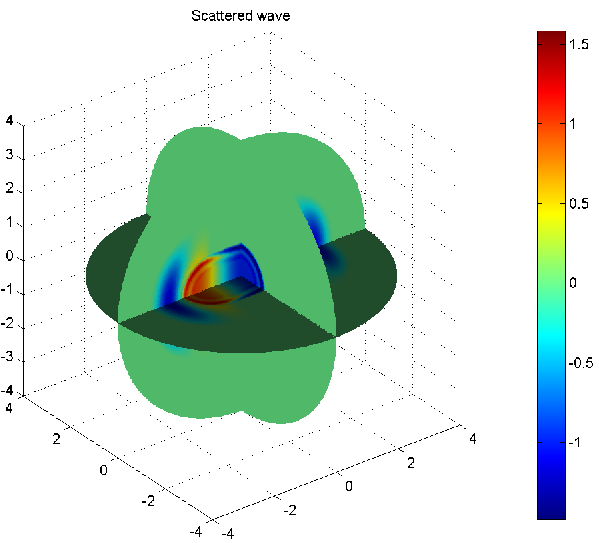}\hfill{}\includegraphics[width=0.48\textwidth]{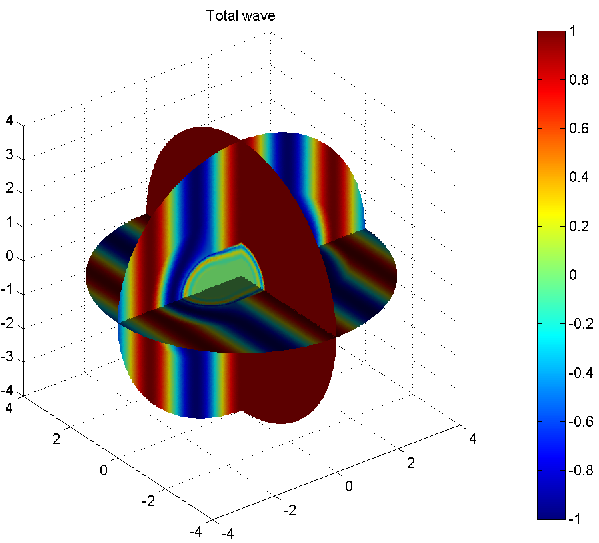}\hfill{}

\hfill{}~~~~~~~~~~(b)~~~~~~~~~~~~~~~~~~~~~~~~~~~~~~~~~~~~~\hfill{}(c)~~~~~~~~~~~~~~\hfill{}

\caption{\label{fig:Type-D}Type \textbf{D} when $\varepsilon=0.01$: Real
part of (a) the incident plane wave $\exp\{{i}kx\cdot d_0\}$,
(b) the scattered field and (c) the total field  sliced at $x=0$,
$z=0$ and $y=0$, respectively. .}
\end{figure}

\begin{figure}[htbp]
\hfill{}\includegraphics[width=0.6\textwidth]{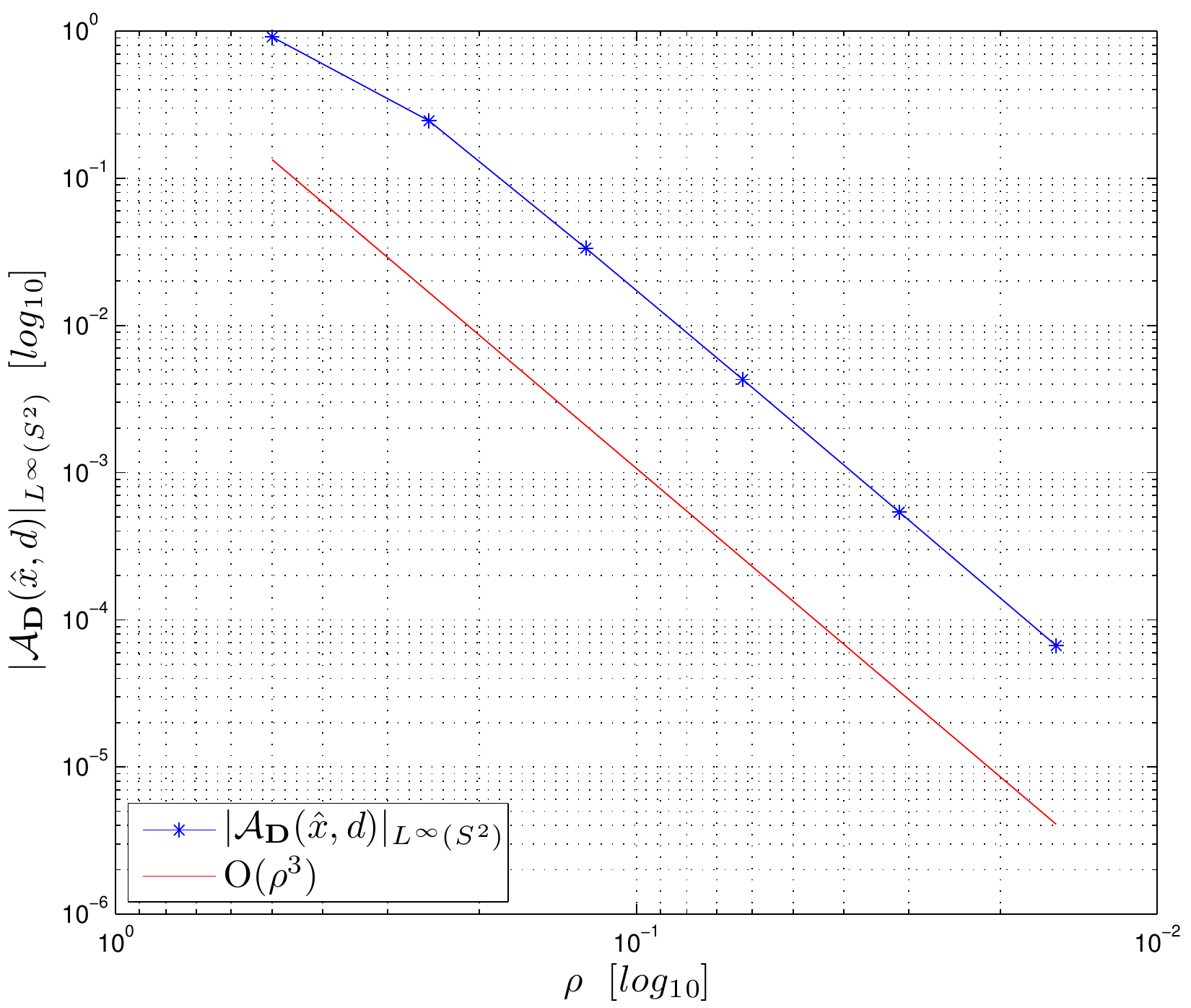}\hfill{}

\caption{\label{fig:3d:cloak:full}Convergence history of scattering
measurement data versus $\varepsilon$ for the 3D $l^{2}$ cloak $\textbf{D}$
when $a=\varepsilon$.}
\end{figure}


%
\begin{figure}[htbp]
\hfill{}\includegraphics[width=0.6\textwidth]{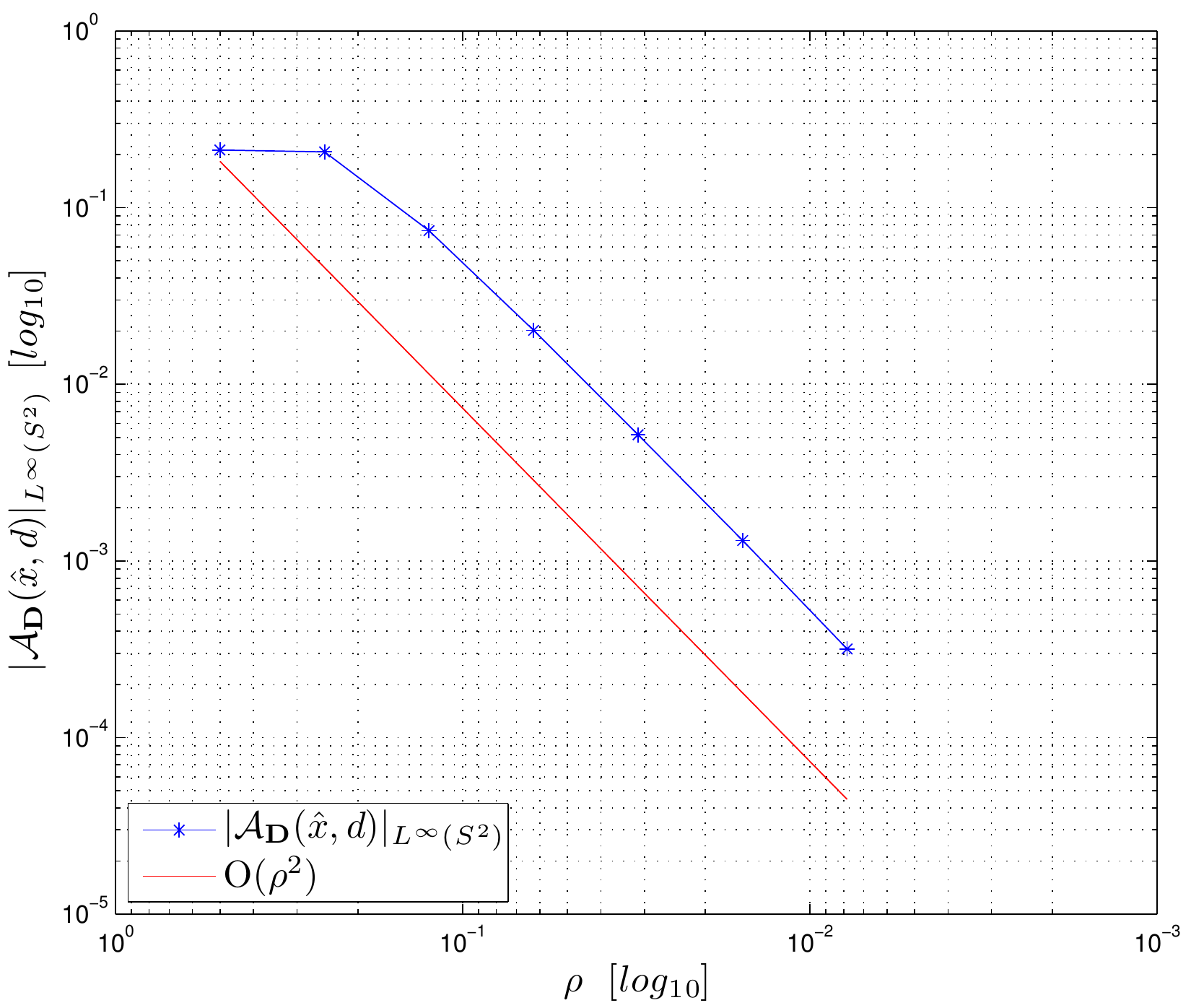}\hfill{}

\caption{\label{fig:3d:cloak:p2}Convergence history of scattering
measurement data versus $\varepsilon$ for 3D $l^{2}$ cloak $\textbf{D}$
when $a=1$ and $\varepsilon$ varies.}
\end{figure}

Finally, we investigate the 3D cloak of type $\textbf{E}$ proposed in Definition~\ref{def:51 e}. We choose fixed length $a=1$ and width $b=1$
but vary the regularization parameter $\varepsilon$. The incident plane
wave is impinging upon the $\textbf{E}$ cloak along the
$x$-direction which is parallel to the $xy$ plane where the type
$\textbf{E}$ cloak lies. Figure~\ref{fig:Type-E} shows us the
scattered and total pressure fields in sliced plots. Obviously the
scattered field is almost null outside the cloak, while the
total field behaves nearly perfectly through the cloak by bending
and compressing in the cloaking medium.


\begin{figure}[htbp]
\hfill{}

\hfill{}\includegraphics[width=0.48\textwidth]{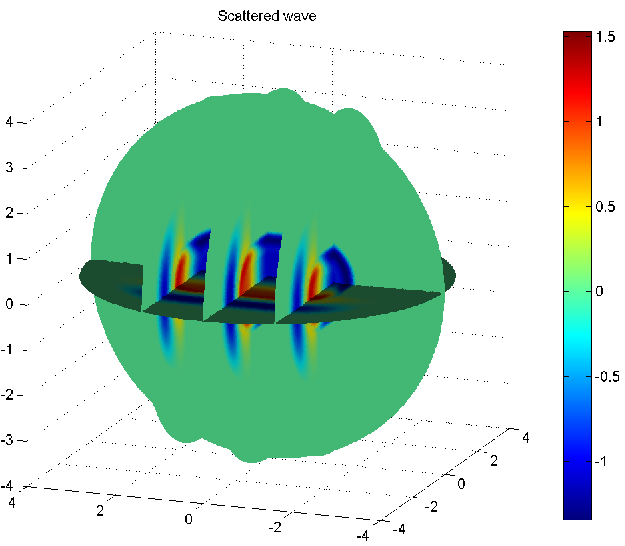}\hfill{}\includegraphics[width=0.48\textwidth]{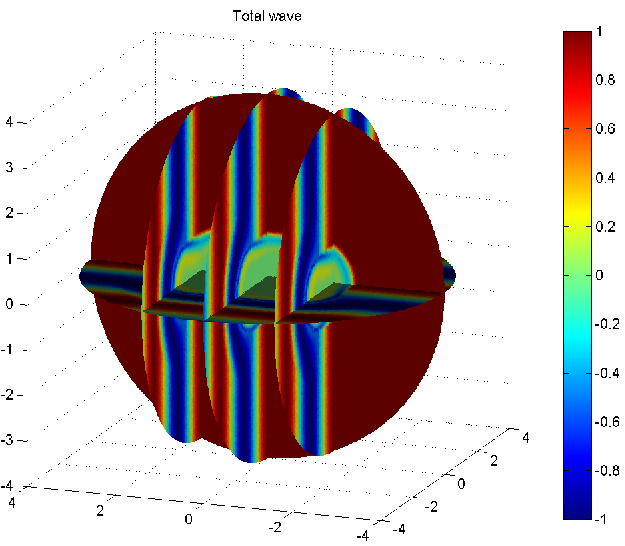}\hfill{}

\hfill{}~~~~~~~~~~(a)~~~~~~~~~~~~~~~~~~~~~~~~~~~~~~~~~~~~~\hfill{}(b)~~~~~~~~~~~~~~\hfill{}

\caption{\label{fig:Type-E}Type \textbf{E} when $\varepsilon=0.01$: Real part of (a)
the scattered field  and (b) the total field  sliced at $x=0$, $z=0$
and $y=-1.5$, $0$ and $1.5$, respectively.}
\end{figure}

\section*{Acknowledgement}

The work of Jingzhi Li is supported by the NSF of China. (No. 11201453 and 91130022). The work of Hongyu Liu is supported by NSF grant, DMS 1207784.
Luca Rondi is partly supported by Universit\`a
degli Studi di Trieste through Finanziamento per Ricercatori di Ateneo 2009 and by GNAMPA, INdAM, through 2012 projects. The work of Gunther Uhlmann is partly
supported by NSF and the Fondation des Sciences Math\'ematiques de Paris.

%


\begin{thebibliography}{99}

\bibitem{Ada}
{R.~A.~Adams},
{\it Sobolev Spaces},
Academic Press, New York, 1975.

%
%
\bibitem{AE} {A.~Alu and N.~Engheta}, {\it Achieving transparency with plasmonic and
metamaterial coatings}, Phys. Rev. E, {\bf 72 }(2005), 016623.

\bibitem{Ammari0} {H.~Ammari, G.~Ciraolo, H.~Kang, H.~Lee and G.~Milton}, {\it Spectral analysis of a Neumann-Poincar\'e operator and analysis of cloaking due to anomalous localized resonance}, Arch. Ration. Mech. Anal., at press.

\bibitem{Ammari3}  {H.~Ammari, J.~Garnier, V.~Jugnon, H.~Kang, M.~Lim and H.~Lee},
{\it Enhancement of near-cloaking. Part III: Numerical simulations,
statistical stability, and related questions}, Contemporary
Mathematics, \textbf{577} (2012), 1--24.


\bibitem{Ammari1} {H.~Ammari, H.~Kang, H.~Lee and M.~Lim}, {\it Enhancement of near-cloaking using generalized polarization tensors vanishing structures. Part I: The conductivity problem}, Comm. Math. Phys., {\bf 317} (2013), 253--266.

\bibitem{Ammari2} {H.~Ammari, H.~Kang, H.~Lee and M.~Lim}, {\it Enhancement of near-cloaking. Part II: The Helmholtz equation}, Comm. Math. Phys., {\bf 317} (2013), 485--502.

%
%
%

\bibitem{CakCol} {F.~Cakoni and D.~Colton}, {\it Qualitative Methods in Inverse Scattering Theory}, Springer-Verlag,
Berlin Heidelberg, 2006.


%





\bibitem{CC} {H.~Chen and C.~T.~Chan}, {\it Acoustic cloaking and transformation acoustics},
J. Phys. D: Appl. Phys., \textbf{43} (2010), 113001.





\bibitem{ColKre} {D.~Colton and R.~Kress}, {\it Inverse
Acoustic and Electromagnetic Scattering Theory}, 2nd Edition,
Springer-Verlag, Berlin, 1998.

%










\bibitem{GKLUoe} {A.~Greenleaf, Y.~Kurylev, M.~Lassas and G.~Uhlmann}, {\it Improvement of cylindrical cloaking with SHS lining},
Optics Express, {\bf 15} (2007), 12717--12734.

\bibitem{GKLU3} {A.~Greenleaf, Y.~Kurylev, M.~Lassas and G.~Uhlmann}, {\it Full-wave invisibility of active devices at all
frequencies}, Comm. Math. Phys., {\bf 279} (2007), 749--789.

\bibitem{GKLU_2} {A.~Greenleaf, Y.~Kurylev, M.~Lassas and G.~Uhlmann},
{\it Isotropic transformation optics: approximate acoustic and
quantum cloaking}, New J. Phys., {\bf 10} (2008), 115024.

\bibitem{GKLU2} {A.~Greenleaf, Y.~Kurylev, M.~Lassas, and G.~Uhlmann}, {\it Electromagnetic wormholes via handlebody constructions}, Comm.
Math. Phys., {\bf 281} (2008), 369--385.

\bibitem{GKLU4} {A.~Greenleaf, Y.~Kurylev, M.~Lassas and G.~Uhlmann}, {\it Invisibility and inverse prolems}, Bulletin A. M. S., {\bf
46} (2009), 55--97.

\bibitem{GKLU5} {A.~Greenleaf, Y.~Kurylev, M.~Lassas and G.~Uhlmann}, {\it Cloaking devices, electromagnetic wormholes and
transformation optics}, SIAM Review, {\bf 51} (2009), 3--33.

\bibitem{GLU} {A.~Greenleaf, M.~Lassas and G.~Uhlmann},
{\it Anisotropic conductivities that cannot be detected by EIT}, Physiolog.
Meas, (special issue on Impedance Tomography), {\bf 24} (2003), 413.

\bibitem{GLU2} {A.~Greenleaf, M.~Lassas and G.~Uhlmann},
{\it On nonuniqueness for Calder\'on's inverse problem}, Math. Res.
Lett., {\bf 10} (2003), 685--693.

%










\bibitem{Isa} {V.~Isakov}, {\it Inverse Problems for Partial
Differential Equations}, 2nd Edition, Springer-Verlag, New York, 2006.

\bibitem{KocLiuSunUhl} {I.~Kocyigit, H.~Y.~Liu and H.~Sun}, {\it Regular scattering patterns from near-cloaking devices and their implications for invisibility cloaking}, preprint, 2012.

\bibitem{KOVW} {R.~Kohn, O.~Onofrei, M.~Vogelius and M.~Weinstein}, {\it Cloaking via change of variables for the Helmholtz
equation}, Comm. Pure Appl. Math., {\bf 63} (2010), 973--1016.


\bibitem{KSVW} {R.~Kohn, H.~Shen, M.~Vogelius and M.~Weinstein}, {\it Cloaking via change of variables in electrical impedance
tomography}, Inverse Problems, {\bf 24} (2008), 015016.

%
%

\bibitem{Leo} {U.~Leonhardt}, {\it Optical conformal mapping},
Science, {\bf 312} (2006), 1777--1780.



\bibitem{LiLiuSun} {J.~Li, H.~Y.~Liu and H.~Sun}, {\it Enhanced approximate cloaking by SH and FSH lining}, Inverse Problems, {\bf 28} (2012), 075011.

\bibitem{LiP} {J.~Li and J.~B.~Pendry}, {\it Hiding under the carpet: a new strategy for cloaking}, Phys. Rev. Lett., {\bf 101} (2008), 203901.

\bibitem{Lio} {J.L.~Lions and E.~Magenes},  {\it Non-Homogeneous Boundary Value Prob\-lems and Applications I}, Springer-Verlag, 1970.


\bibitem{Liu} {H.~Y.~Liu}, {\it Virtual reshaping and
invisibility in obstacle scattering}, Inverse Problems, {\bf 25}
(2009), 045006.




\bibitem{LSSZ} {H.~Y.~Liu, Z.~Shang, H.~Sun and J.~Zou}, {\it Singular perturbation of reduced wave equation and scattering from an embedded obstacle}, J. Dyn. Diff. Eq., {\bf 24} (2012), 803--821.

\bibitem{LiuSun} {H.~Y.~Liu and H.~Sun}, {\it Enhanced near-cloak by FSH lining},  J. Math. Pures Appl., {\bf 99} (2013), 17--42.


\bibitem{LZ1} {H.~Y.~Liu and T.~Zhou}, {\it Two dimensional
invisibility cloaking by transformation optics}, Discrete Contin. Dyn. Syst., {\bf 31} (2011), 525--543.

%

\bibitem{Men-Ron}
{G.~Menegatti and L.~Rondi},
{\it Stability for the acoustic scattering problem for sound-hard
scatterers},
preprint, 2013.




\bibitem{MN} {G.~W.~Milton and N.-A.~P.~Nicorovici}, {\it On the
cloaking effects associated with anomalous localized resonance},
Proc. Roy. Soc. Lond. A, {\bf 462} (2006), 3027--3095.



\bibitem{Ned}
{J.~C.~N\'ed\'elec}, {\it Acoustic and Electromagnetic Equations: Integral Representations for Harmonic Problems},
Springer-Verlag, New York, 2001.

\bibitem{Nor} {A.~N.~Norris}, {\it Acoustic cloaking theory}, Proc. R. Soc. Lond. A, {\bf 464} (2008),
2411--2434.

\bibitem{N1} {H.~Nguyen}, {\it Cloaking via change of variables for the Helmholtz equation in the whole space}, Comm. Pure Appl. Math., {\bf 63} (2010), 1505--1524.

\bibitem{N2} {H.~Nguyen and M.~S.~Vogelius}, {\it Full range scattering estimates and their application to cloaking}, Arch. Ration. Mech. Anal., {\bf 203} (2012), 769--807.



%

\bibitem{PenSchSmi} {J.~B.~Pendry, D.~Schurig and D.~R.~Smith}, {\it Controlling electromagnetic fields}, Science, {\bf
312} (2006), 1780--1782.

\bibitem{Ron03}
{L.~Rondi},
{\it Unique determination of non-smooth sound-soft scatterers by finite\-ly many
far-field measurements},
Indiana Univ. Math. J., \textbf{52} (2003), 1631--1662.

\bibitem{RYNQ} {Z.~Ruan, M.~Yan, C.~W.~Neff and M.~Qiu},
{\it Ideal cylyindrical cloak: Perfect but sensitive to tiny
 perturbations}, Phy. Rev. Lett., {\bf 99} (2007), 113903.


%
%
\bibitem{U2} {G.~Uhlmann}, {\it Visibility and invisibility}, ICIAM 07--6th International Congress on Industrial and Applied Mathematics, Eur. Math. Soc., Z\"urich, pp.~381--408, 2009.
%
%







\bibitem{YYQ}
{M.~Yan, W.~Yan and M.~Qiu}, {\it Invisibility cloaking by
coordinate transformation}, Chapter 4 of {\it Progress in
Optics}--Vol. 52, Elsevier, pp.~261--304, 2008.



\end{thebibliography}
\end{document}